\newtheorem{theorem}{Theorem}[section]
\newtheorem{lemma}[theorem]{Lemma}
\newtheorem{proposition}[theorem]{Proposition}
\newtheorem{corollary}[theorem]{Corollary}
 \theoremstyle{definition}
\theoremstyle{remark}
\newtheorem{remark}[theorem]{Remark}
\theoremstyle{claim}
\theoremstyle{fact}
\numberwithin{equation}{section}
\begin{document}
\begin{CJK*}{GBK}{song}

\title[Fully non-linear elliptic equations]
{Regularity of  fully non-linear elliptic equations on Hermitian manifolds}


\author{Rirong Yuan}
\email{rirongyuan@stu.xmu.edu.cn}
\thanks{This is the first part of a series of   researches devoted to the study of Dirichlet problem for fully non-linear elliptic equations on complex manifolds, which include [arXiv:2001.09238], [arXiv:2106.14837], [Pure Appl. Math. Q. {\bf 16} (2020),   1585-1617; MR4221006] and [Calc. Var. PDE. {\bf 60} (2021),  Paper No. 162, 20 pp.; MR4290375]. 
}
\date{}

\maketitle

\begin{abstract} 

In this paper we propose new insights and ideas  to set up  quantitative  boundary estimates
for solutions to Dirichlet problem of a class of  fully non-linear elliptic equations on compact
Hermitian manifolds  with real analytic Levi flat boundary.  With the quantitative boundary estimates at hand,
we can establish the gradient estimate and give a unified approach to investigate the
 existence and regularity of solutions 
 of Dirichlet problem with
sufficiently smooth boundary data, which include the geodesic equation in the space of K\"ahler
metrics as a special case.  Our method can also be applied to Dirichlet problem for analogous
fully non-linear elliptic equations on a compact Riemannian manifold with concave boundary.

\end{abstract}

\maketitle

\section{Introduction}

\medskip


The most important fully non-linear elliptic equation 
 is perhaps the complex Monge-Amp\`{e}re equation, 
which is closely related to the volume form and the
 representation of Ricci form of K\"{a}hler manifolds.
 Calabi conjectured in  \cite{Calabi1954} that any $(1,1)$-form which represents
  the first Chern class  of a  closed K\"ahler manifold   is the Ricci form
 of a K\"ahler metric with the same K\"ahler class as the original one.
 This fundamental problem   can be reduced to
solving  non-degenerate and smooth
 complex Monge-Amp\`{e}re equation
  on closed K\"{a}hler manifolds.
   In his pioneering paper,  Yau   \cite{Yau78} proved Calabi's conjecture and  showed that
     any closed K\"{a}hler manifold with   zero or negative first Chern class
     admits a unique  K\"{a}hler-Einstein metric in the given K\"{a}hler class.
The existence of K\"{a}hler-Einstein metrics on  closed K\"ahler manifolds of  negative
first Chern class was also  proved  by Aubin \cite{Aubin1976Equations} independently.
 A heat equation proof of Yau's theorem was obtained by  Cao  \cite{Cao-1985-KRflow} in which he introduced
the K\"ahler-Ricci flow.
 Yau's work was partially  extended by Tosatti-Weinkove \cite{Tosatti2010Weinkove}
 to closed Hermitian manifolds (see also   \cite{Gill2011} for a parabolic version).
 The other seminal  works concerning the complex Monge-Amp\`{e}re equation was done by Bedford-Taylor \cite{Bedford1976Taylor,Bedford1982Taylor}
 on generalized solutions in the sense of pluripotential theory,
 and by Caffarelli-Kohn-Nirenberg-Spruck \cite{Caffarelli1985The} on
 the Dirichlet problem for the complex Monge-Amp\`{e}re equation on  a strictly pseudoconvex domain in $\mathbb{C}^n$.

This paper is devoted to investigating the solvability and regularity of solutions to fully non-linear elliptic equations on Hermitian manifolds.
Let $(M,J,\omega)$ be  a compact Hermitian manifold
 of complex dimension $n\geq 2$ with Levi flat 
 boundary $\partial M$,  where   $\omega$ is
 the  K\"ahler form 
 locally given by  $$\omega=\sqrt{-1}g_{i\bar j}dz^i\wedge d\bar z^j,$$
 and  $J$ denotes the underlying complex structure. Throughout the paper, unless otherwise indicated, we assume in addition that the boundary is smooth and real analytic. 
Let $\chi=\sqrt{-1}\chi_{i\bar j}dz^i\wedge d\bar z^j$ be a   smooth real $(1,1)$-form on $\bar M:= M\cup \partial M$.
We are concerned with   fully non-linear elliptic equations  for deformation of $\chi$,
 \begin{equation}
\label{mainequ1}
\begin{aligned}
F(\mathfrak{g}[u]):=
f(\lambda(\mathfrak{g}[u]))=  \psi    \mbox{ in } M,
\end{aligned}
\end{equation}
with the boundary value condition $u =  \varphi  \mbox{ on } \partial M,$
where   $\psi$ and $\varphi$ are both two functions with suitable regularity, $\mathfrak{g}[u]=\chi+\sqrt{-1}\partial\overline{\partial} u$,
 and $\lambda(\mathfrak{g}[u])=(\lambda_1,\cdots,\lambda_n)$ denote the eigenvalues of $\mathfrak{g}[u]$ with respect to
 $\omega$.
 

  The study of fully non-linear elliptic equations analogous to
  \eqref{mainequ1} in the setting of real variables  goes back to the work of
  Caffarelli-Nirenberg-Spruck  \cite{CNS3} and
Ivochkina \cite{Ivochkina1981} who dealt with some special cases.
The function $f$ is assumed to be a smooth symmetric function  defined in an open
symmetric  and convex cone $\Gamma\subset \mathbb{R}^n$
with vertex at the origin, and boundary $\partial \Gamma\neq \emptyset$,
$$\Gamma_{n} \subseteq \Gamma \subset \Gamma_1,$$
where $\Gamma_n=\{\lambda\in \mathbb{R}^{n}: \mbox{ each } \lambda_i>0\} \mbox{ and } \Gamma_1=\{\lambda\in \mathbb{R}^{n}: \sum_{i=1}^n \lambda_i>0\}.$
Moreover,  $f$ shall satisfy the following structure conditions:
\begin{equation}
\label{elliptic}
 \begin{aligned}
 f_{i} := \frac{\partial f}{\partial \lambda_{i}}> 0  \mbox{ in } \Gamma,\  1\leq i\leq n,
  \end{aligned}
\end{equation}
\begin{equation}
 \begin{aligned}
\label{concave}
 f \mbox{ is concave in } \Gamma,
 \end{aligned}
\end{equation}
\begin{equation}
 \label{nondegenerate}
  \begin{aligned}
   \delta_{\psi,f}:=\inf_{\bar M} \psi -\sup_{\partial \Gamma} f>0,
 \end{aligned}
 \end{equation}
where   $$\sup_{\partial \Gamma}f := \sup_{\lambda_{0}\in \partial \Gamma } \limsup_{\lambda\rightarrow \lambda_{0}} f(\lambda).$$
The constant $\delta_{\psi, f} $  measures if the equation is degenerate. Namely,
the equation is   called non-degenerate (respectively, degenerate) if  $\delta_{\psi,f}>0$ (respectively, if $\delta_{\psi,f}$ vanishes).

In addition, we assume
 \begin{equation}
 \begin{aligned}
 \label{addistruc}
  \mbox{For any } \sigma<\sup_{\Gamma} f 
   \mbox{ and  } \lambda\in \Gamma, \quad \lim_{t\rightarrow + \infty} f(t\lambda)>\sigma
 \end{aligned}
\end{equation}
so that one can apply  Sz\'ekelyhidi's  \cite{Gabor}   Liouville type theorem
 extending a result of Dinew-Ko{\l}odziej \cite{Dinew2012Kolodziej}
to derive the gradient estimate via a blow-up argument.
We shall emphasize that  conditions \eqref{elliptic}, \eqref{concave} and  \eqref{addistruc}
 allow  many natural functions, for instance if  $f$
is homogeneous of degree one with $f>0$ in $\Gamma$, or
 the function $f$ associated with $\Gamma=\Gamma_n$.

 In order to study equation \eqref{mainequ1} within the framework of elliptic equations,
we shall find the solutions in the class of
$C^2$-\textit{admissible} functions $u$  satisfying 
$$\lambda(\mathfrak{g}[u])\in \Gamma.$$ 

%
%
In the theory of fully non-linear elliptic equations, the
standpoint of a subsolution $\underline{u}\in C^2(\bar M)$ satisfying
\begin{equation}
\label{existenceofsubsolution}
\begin{aligned}
 f(\lambda(\mathfrak{g}[\underline{u}]) )
 \geq
 \psi   \mbox{ in }   M,
\end{aligned}
\end{equation}
with $\underline{u}=   \varphi \mbox{ on } \partial  M,$
 without geometric restriction to the boundary, plays important roles in deriving \textit{a priori} estimates for Dirichlet problem and  has a great advantage
in  applications to geometric problems (cf. \cite{Chen,Guan1993Boundary,GuanP2002The,Guan2009Zhang,Hoffman1992Boundary} and references therein).
  The \textit{admissible} subsolution was used by Hoffman-Rosenberg-Spruck \cite{Hoffman1992Boundary}
 and further developed in \cite{Guan1993Boundary,Guan1998The} to derive second order boundary estimates for real and complex Monge-Amp\`ere equation on general bounded domains.
In addition,   Guan \cite{Guan12a} recently used it to derive the global second order estimate for  general fully non-linear elliptic
equations on Riemannian manifolds.
Influenced by the work of Guan \cite{Guan12a},
a flexible   notion of a  $\mathcal{C}$-subsolution was recently introduced by Sz\'ekelyhidi  \cite{Gabor}.
     With  the assumption of the existence   of  $\mathcal{C}$-subsolutions,
 Sz\'ekelyhidi established $C^{2,\alpha}$-estimate for  \textit{admissible}
 solutions of fully non-linear elliptic equations satisfying \eqref{elliptic}-\eqref{addistruc} on closed Hermitian manifolds.
The concept of $\mathcal{C}$-subsolution turns out to be   applicable  for fully non-linear elliptic equations in the setting of closed manifolds (cf. \cite{GTW15,CollinsJacobYau}).

\subsection{Statement of  main results}
We show that  Dirichlet problem  \eqref{mainequ1}   is uniquely  solvable
   in the class of \textit{admissible} functions, provided 
   that the function $\psi$ and  boundary data $\varphi$ are sufficiently smooth,  and
   the Dirichlet problem  admits an \textit{admissible} subsolution taking the same boundary data.



\begin{theorem}
\label{existence}
Let $(M,J,\omega)$ be a compact Hermitian manifold with real analytic Levi flat boundary.
Let  $\psi\in C^{k,\alpha}(\bar M)$ and $\varphi\in C^{k+2,\alpha}(\partial M)$ for an
integer $k\geq 2$ and $0<\alpha<1$.
In addition to \eqref{elliptic}-\eqref{addistruc}, we assume
 that  there exists an admissible subsolution $\underline{u}\in C^{3}(\bar M)$ obeying \eqref{existenceofsubsolution}.
Then Dirichlet problem   \eqref{mainequ1}
has a unique  admissible solution $u\in C^{k+2,\alpha}(\bar M)$. Moreover,
\begin{equation}
\begin{aligned}
\label{boundaryestimate1233}
|u|_{C^{2}(\bar M)}\leq C,
\end{aligned}
\end{equation}
where $C$ depends only on
$|\varphi|_{C^{2,1}(\bar M)}$, $|\psi|_{C^{1,1}(\bar M)}$, $|\underline{u}|_{C^{2}(\bar M)}$,
 $|\chi|_{C^{2}(\bar M)}$  
  and other known data (but not on $(\delta_{\psi,f})^{-1}$).
\end{theorem}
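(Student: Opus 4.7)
The plan is to reduce the theorem to an \emph{a priori} $C^{2,\alpha}(\bar M)$ estimate and then close the existence argument by the continuity method. Concretely, I would join $\psi$ to a datum for which $\underline{u}$ is itself an admissible solution via a one-parameter family and apply openness (the implicit function theorem in $C^{k+2,\alpha}$, whose applicability follows from \eqref{elliptic} and \eqref{concave}) together with closedness (the a priori estimate) to propagate solvability. Uniqueness is immediate from the concavity and ellipticity of $F$ via the maximum principle applied to the difference of two admissible solutions.

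\textbf{Zero-th order and the key boundary $C^2$ estimate.} The $C^0$ estimate is routine: comparison gives $\underline{u}\le u$ from below, and an upper barrier built using \eqref{addistruc} controls $u$ from above. The central step, and the one I expect to be the main obstacle, is the boundary second-order estimate \eqref{boundaryestimate1233} that must not depend on $(\delta_{\psi,f})^{-1}$. Here I would exploit the real-analytic Levi flat structure of $\partial M$: a collar of $\partial M$ carries a holomorphic foliation by complex hypersurfaces, and in coordinates adapted to this foliation the tangential and mixed tangential-normal second derivatives on $\partial M$ can be bounded by standard barrier arguments of the form $A(\underline{u}-u)+B\rho^{2}-N\rho$, where $\rho$ is a defining function for $\partial M$ and the constants are chosen using the admissibility of $\underline{u}$. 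The double-normal component $u_{\nu\nu}|_{\partial M}$ is the genuinely hard case: the classical tangent-cone/concavity approach produces a bound depending on $(\delta_{\psi,f})^{-1}$, so one must instead combine the vanishing of the Levi form with the real analyticity of $\partial M$ to build leafwise quantitative barriers whose constants depend only on the data listed in the statement. This is exactly where the hypothesis on $\partial M$ does the real work.

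\textbf{Gradient estimate and global $C^2$.} With the boundary $C^2$ bound at hand, the boundary gradient estimate follows from $\underline{u}$ and the same Levi-flat barriers, and the interior gradient estimate follows by a blow-up contradiction in the spirit of Sz\'ekelyhidi: if $|\nabla u|$ blew up along an interior sequence, rescaling would produce a nonconstant admissible limit $U$ on $\bfC^{n}$ solving a limiting homogeneous equation, and the Liouville-type result of \cite{Gabor}---whose hypotheses are supplied precisely by \eqref{addistruc}---would force $U$ to be constant, a contradiction. Once $|\nabla u|\le C$, Guan's maximum principle argument applied to the largest eigenvalue of $\fg[u]$, using $\underline{u}$ as barrier via \eqref{existenceofsubsolution}, yields the global second-order estimate; combined with the boundary estimate this is \eqref{boundaryestimate1233}.

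\textbf{Higher regularity.} Interior $C^{2,\alpha}$ follows from the Evans--Krylov theorem applied to the concave operator $F$ via \eqref{elliptic} and \eqref{concave}, and boundary $C^{2,\alpha}$ from its boundary version, available since $\partial M$ is real analytic and $\varphi\in C^{k+2,\alpha}$. A standard bootstrap, differentiating \eqref{mainequ1} and applying Schauder estimates to the linearized equation (which is uniformly elliptic once $u\in C^{2,\alpha}(\bar M)$ is known), then gives $u\in C^{k+2,\alpha}(\bar M)$. The decisive obstacle in the whole programme is the quantitative double-normal boundary estimate, and it is there that the real-analytic Levi flat assumption is used in an essential way.
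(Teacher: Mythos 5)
Your high-level scaffolding ($C^0$, boundary $C^2$, blow-up gradient bound, Evans--Krylov, Schauder, continuity method) matches the paper's, and the rough shape of the boundary argument is in the right place. But the step you yourself single out as "the decisive obstacle" --- the quantitative double-normal boundary estimate --- is exactly where your proposal has a genuine gap, and the idea you offer there does not correspond to what actually makes the estimate work.

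The paper does not obtain $u_{\nu\bar\nu}|_{\partial M}$ from "leafwise quantitative barriers"; barriers are not used at all for the double-normal direction. Instead, the argument is purely algebraic: on a Levi flat boundary the pure tangential part of the complex Hessian is forced by the boundary condition, $\mathfrak{g}_{\alpha\bar\beta}(p_0)=\underline{\mathfrak{g}}_{\alpha\bar\beta}(p_0)$ for $\alpha,\beta<n$, and the whole point is then a new \emph{quantitative} refinement of Caffarelli--Nirenberg--Spruck's Lemma 1.2 (Lemma \ref{refinement2} in the paper) showing that once the lower-right entry $\mathrm{\bf a}$ of a Hermitian matrix exceeds a threshold that grows \emph{quadratically} in the off-diagonal entries $|a_i|$, each eigenvalue lands within a prescribed $\epsilon$ of the corresponding diagonal entry. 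Applying this to $\underline{A}(R)$ with the $a_i$ being the mixed entries $\mathfrak{g}_{\alpha\bar n}$, together with the admissibility of $\underline u$, yields $\mathfrak{g}_{n\bar n}(p_0)\le C\bigl(1+\sum_\alpha|\mathfrak{g}_{\alpha\bar n}(p_0)|^2\bigr)$ (Proposition \ref{proposition-quar-yuan1}). This is the main novelty of the paper and it is absent from your sketch; the classical CNS lemma is qualitative and cannot deliver a bound whose constants are explicit in the $|a_i|$, which is precisely what the blow-up argument needs. Note also that this step only needs the boundary to be $C^2$ Levi flat, not real analytic.

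The real analyticity enters in the \emph{other} half of the boundary estimate, the tangential-normal piece (Proposition \ref{Tangential-Normal derivatives}): a theorem of Cartan flattens $\partial M$ to $\{\mathfrak{Re}(z_n)=0\}$ in local holomorphic coordinates, so the tangential operators $D=\pm\partial_{x_\alpha},\pm\partial_{y_\alpha}$ annihilate $u-\varphi$ on the boundary. The barrier $\widetilde\Psi$ there is also not the "standard" one you write down: it carries the quadratic term $\frac{1}{\sqrt{b_1}}\sum_{\tau<n}|(u-\varphi)_\tau|^2$, which is what makes the resulting bound \emph{linear} in $\sup_M|\nabla u|$ rather than merely bounded with a constant depending on $\sup_M|\nabla u|$. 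That linear dependence, fed into the quadratic estimate from Proposition \ref{proposition-quar-yuan1}, produces $\sup_{\partial M}|\partial\bar\partial u|\le C(1+\sup_M|\nabla u|^2)$, and the specific power $2$ is indispensable for the rescaling in the blow-up to converge to a bounded admissible limit. Your write-up glosses over this quadratic structure. In short: you have the skeleton but not the two ideas that carry the proof --- the quantitative eigenvalue lemma for the normal-normal direction, and the gradient-weighted tangential barrier that keeps the tangential-normal estimate linear.
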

 \begin{remark}
Throughout this paper,
 we say a constant $C$ does not depend  on $(\delta_{\psi,f})^{-1}$ if $C$
remains  bounded as $\delta_{\psi,f}$  tends to zero. We say the constant $\kappa$ depends on $\delta_{\psi,f}$
 if $\kappa\rightarrow 0$ when $\delta_{\psi,f}\rightarrow 0$.
 Moreover, in the theorems,
we assume  that the function $\varphi$ is
extended to a $C^3$ function on $\bar M$, still denoted as $\varphi$.
\end{remark}

We can apply our results to investigate   degenerate equations, as our  \textit{a priori}  estimates do not depend on $(\delta_{\psi,f})^{-1}$.
  The degenerate fully non-linear elliptic equations have  been studied 
 from different aspects, including   \cite{GuanP1997Duke,GuanP1999TrudingerWang,IvochTrudingerWang04,Krylov1984,Trudinger1987de,Trudinger95}. They  are also closely connected with 
 the research of certain 
   objects in
 differential geometry and analysis.
For instance, the homogeneous of complex Monge-Amp\`ere equation 
\begin{equation}
\label{HCMA1}
\begin{aligned}
(\omega+\sqrt{-1}\partial \overline{\partial} u)^n=0 \mbox{ in } M, \quad   u=\varphi \mbox{ on } \partial M
\end{aligned}
\end{equation}
plays some important  roles in complex geometry and complex analysis
(cf. \cite{Bedford1979Taylor,ChernLevineNirenberg,Donaldson99,Mabuchi87,Semmes92}).
 See also  \cite{Guedj2017Zeriahi,PhongSongSturm-CMA,Ross2017Nystrom}.
 The following 
 works should be  mentioned:

 \begin{itemize}
  \item  Guan's \cite{GuanP2002The,GuanP2010Remark}  proof of Chern-Levine-Nirenberg conjecture on intrinsic norms.
 \item   The work concerning Donaldson's conjecture in K\"{a}hler geometry due to Chen \cite{Chen}.
 For more  related topics,
   please refer   to
\cite{Blocki2009Onthegeodesic,Calamai2015Zheng,Chen2009III,Chen-Sun2014,Chen2008Tian,Donaldson2002Ho,HeWeiyong2015,Lempert2013Vivas,Phong2006Sturm,Phong2007JSMSturm,Phong2009Sturm}  and references therein.
 \item  The regularity of  pluricomplex Green function in the pluripotential theory (cf. \cite{Blocki05pluricomplex,demailly1987pluricomplex,Guan1998The,Guan2007pluricomplex,lempert1981,Lempert1983MA}
 and references therein).
\end{itemize}

Applying Theorem \ref{existence} and the method of approximation,
we can solve degenerate fully non-linear equations on compact Hermitian manifolds with real analytic Levi flat boundary.

\begin{theorem}
\label{existencede}
In addition to \eqref{elliptic}, \eqref{concave}   and \eqref{addistruc}, 
we assume $f\in C^\infty(\Gamma)\cap C^0(\bar \Gamma)$.
 Let $\varphi\in C^{2,1}(\partial M)$ and  $\psi\in C^{1,1}(\bar M)$ be a function satisfying
  $\delta_{\psi, f}=0$.
Suppose   that there is a strictly admissible subsolution $\underline{u}\in C^{2,1}(\bar M)$ with $\underline{u}=  \varphi    \mbox{ on } \partial  M$
satisfying
\begin{equation}
\label{stricsubsolution}
\begin{aligned}
 f(\lambda(\mathfrak{g}[\underline{u}]) )
 \geq
 \psi+\delta_{0}  \mbox{ in }   M
\end{aligned}
\end{equation}
for some   $\delta_0>0$.   
Then Dirichlet problem \eqref{mainequ1}
 admits a weak solution $u\in C^{1,\alpha}(\bar M)$, $\forall 0<\alpha<1$,
 with $\lambda(\mathfrak{g}[u])\in \bar \Gamma$  and $\Delta u \in L^{\infty}(\bar M)$.

\end{theorem}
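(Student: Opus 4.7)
The plan is to obtain $u$ as a limit of non-degenerate approximations to which Theorem~\ref{existence} applies, exploiting the crucial feature that the $C^2$-estimate \eqref{boundaryestimate1233} is independent of $(\delta_{\psi,f})^{-1}$.

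For $\varepsilon>0$ small, I would mollify $\underline{u}\in C^{2,1}(\bar M)$ and $\psi\in C^{1,1}(\bar M)$ (in local coordinates with a partition of unity) into smooth functions $\underline{u}_\varepsilon,\psi_\varepsilon\in C^\infty(\bar M)$ converging to $\underline{u}$ in $C^{2,\alpha}(\bar M)$ and to $\psi$ in $C^{1,\alpha}(\bar M)$ respectively, and set
$$\widetilde{\psi}_\varepsilon := \psi_\varepsilon + \varepsilon, \qquad \widetilde{\varphi}_\varepsilon := \underline{u}_\varepsilon\big|_{\partial M}.$$
Then $\widetilde{\varphi}_\varepsilon\to\varphi$ in $C^{2,\alpha}(\partial M)$, $\delta_{\widetilde{\psi}_\varepsilon,f}\geq\varepsilon>0$, and for $\varepsilon$ small the continuity of $f$ together with \eqref{stricsubsolution} yields
$$f(\lambda(\mathfrak{g}[\underline{u}_\varepsilon]))\geq \psi+\tfrac{3}{4}\delta_0\geq \widetilde{\psi}_\varepsilon+\tfrac{\delta_0}{2}\quad\text{in }M,$$
so $\underline{u}_\varepsilon$ is a smooth admissible subsolution of the perturbed, non-degenerate Dirichlet problem with data $(\widetilde{\psi}_\varepsilon,\widetilde{\varphi}_\varepsilon)$. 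Applying Theorem~\ref{existence} (with, say, $k=2$) produces an admissible solution $u_\varepsilon\in C^{4,\alpha}(\bar M)$ with $|u_\varepsilon|_{C^{2}(\bar M)}\leq C$, where $C$ depends only on $|\widetilde{\varphi}_\varepsilon|_{C^{2,1}}$, $|\widetilde{\psi}_\varepsilon|_{C^{1,1}}$, $|\underline{u}_\varepsilon|_{C^{2}}$, $|\chi|_{C^{2}}$ and fixed geometric data---all uniformly bounded in $\varepsilon$ by construction---but \emph{not} on $\varepsilon^{-1}=(\delta_{\widetilde{\psi}_\varepsilon,f})^{-1}$. This uniformity is precisely what makes the degenerate passage to the limit possible.

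From $|u_\varepsilon|_{C^{1,1}(\bar M)}\leq C$ one extracts via Arzel\`a--Ascoli a subsequence $u_\varepsilon\to u$ in $C^{1,\alpha}(\bar M)$ for every $\alpha\in(0,1)$; the limit $u$ lies in $W^{2,p}(\bar M)$ for every $p<\infty$ with $\Delta u\in L^\infty(\bar M)$, and $u=\varphi$ on $\partial M$ by the uniform convergence $\widetilde{\varphi}_\varepsilon\to\varphi$. Passing to a further subsequence so that $\mathfrak{g}[u_\varepsilon]\to\mathfrak{g}[u]$ almost everywhere, the inclusion $\lambda(\mathfrak{g}[u_\varepsilon])\in\Gamma$ combined with the closedness of $\bar\Gamma$ gives $\lambda(\mathfrak{g}[u])\in\bar\Gamma$ a.e., and the continuous extension $f\in C^0(\bar\Gamma)$ transports $F(\mathfrak{g}[u_\varepsilon])=\widetilde{\psi}_\varepsilon$ to $f(\lambda(\mathfrak{g}[u]))=\psi$ almost everywhere in $M$, yielding the desired weak solution.

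The main obstacle is the $\varepsilon$-uniform $C^2$ estimate supplied by Theorem~\ref{existence}; once that is granted, the remainder is a compactness and soft analysis argument. The auxiliary technical points are (i) preserving strict admissibility of the mollified subsolution, handled by the strict margin $\delta_0$ in \eqref{stricsubsolution} combined with the $C^{2,\alpha}$ convergence of the mollifications, and (ii) the identification $\lambda(\mathfrak{g}[u])\in\bar\Gamma$ and the a.e.\ validity of the equation, which follow from the $W^{2,\infty}$ regularity of $u$ and the continuity of $f$ on $\bar\Gamma$.
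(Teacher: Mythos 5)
Your overall strategy---mollify the data, shift $\psi$ by $\varepsilon$ to restore non-degeneracy, invoke Theorem~\ref{existence} with its $\varepsilon$-uniform $C^2$ bound, and extract a convergent subsequence---is exactly the ``method of approximation'' the paper alludes to for Theorem~\ref{existencede}, and the bookkeeping (uniformity of $|\widetilde{\varphi}_\varepsilon|_{C^{2,1}}$, $|\widetilde{\psi}_\varepsilon|_{C^{1,1}}$, $|\underline{u}_\varepsilon|_{C^2}$, preservation of the subsolution margin $\delta_0$, and the $C^{1,\alpha}$ extraction by Arzel\`a--Ascoli) is correct.

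The one step that does not go through as written is the sentence ``Passing to a further subsequence so that $\mathfrak{g}[u_\varepsilon]\to\mathfrak{g}[u]$ almost everywhere.'' A uniform bound $|u_\varepsilon|_{C^2(\bar M)}\leq C$ gives a uniform $L^\infty$ bound on $\partial\bar\partial u_\varepsilon$, hence weak-$*$ convergence (and weak $L^p$ convergence for all $p<\infty$) of a subsequence, but it does \emph{not} give pointwise a.e.\ convergence of second derivatives; bounded sequences in $L^\infty$ need not have a.e.\ convergent subsequences. As a consequence, neither the claim $\lambda(\mathfrak{g}[u])\in\bar\Gamma$ a.e.\ nor the identity $f(\lambda(\mathfrak{g}[u]))=\psi$ a.e.\ follows from the argument as stated. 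Both are repairable by standard machinery: for the cone inclusion, observe that $\{A \text{ Hermitian}: \lambda(A)\in\bar\Gamma\}$ is a closed convex cone (since $\bar\Gamma$ is a closed convex symmetric cone, $A\mapsto\operatorname{dist}(\lambda(A),\bar\Gamma)$ is convex), so Mazur's theorem applied to the weakly convergent $\mathfrak{g}[u_\varepsilon]$ yields $\lambda(\mathfrak{g}[u])\in\bar\Gamma$ a.e.\ directly. For the equation itself, the clean route is viscosity-solution stability: each $u_\varepsilon$ is a classical, hence viscosity, solution of $F(\mathfrak{g}[\cdot])=\widetilde{\psi}_\varepsilon$; uniform convergence $u_\varepsilon\to u$, $\widetilde{\psi}_\varepsilon\to\psi$ shows $u$ is a viscosity solution, and since $u\in W^{2,\infty}(\bar M)$ is twice differentiable a.e.\ by Alexandrov's theorem, it satisfies the equation pointwise a.e. With that substitution your proof matches the paper's approach.
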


Our method and results may enable us to attack various problems 
from complex geometry, for instance
the geodesic equations in the space of
K\"ahler metrics
and the construction and regularity of
  geodesic rays associated to test configurations studied in   the cited literature above.

The primary difficulty 
 is deriving the gradient estimate.
It is pretty hard to prove the gradient bound for general fully non-linear elliptic equations
 on curved complex  manifolds.
The blow-up argument is   an alternative approach to deriving gradient estimate, as shown
by Chen \cite{Chen} for Dirichlet problem of complex Monge-Amp\`ere equation on $X\times A$
where    $A=\mathbb{S}^1\times [0,1]$ and $X$ is a closed K\"ahler manifold, and by Dinew-Ko{\l}odziej  \cite{Dinew2012Kolodziej}
for  complex $k$-Hessian equation on  closed K\"ahler manifolds  using Hou-Ma-Wu's
\cite{HouMaWu2010} second order estimate of the form
\begin{equation}
\label{good1}
\begin{aligned}
\sup_{M}|\partial\bar \partial  u|\leq C (1+\sup_{ M} |\nabla u|^2).
\end{aligned}
\end{equation}
The results of Hou-Ma-Wu and Dinew-Ko{\l}odziej in \cite{HouMaWu2010,Dinew2012Kolodziej} have been extended extensively by
 Sz\'ekelyhidi \cite{Gabor} 
 to fully non-linear elliptic equations
    satisfying \eqref{elliptic}-\eqref{addistruc}   on closed Hermitian manifolds.
    We also refer the reader to \cite{GTW15,Tosatti2017Weinkove,Tosatti2013Weinkove,Zhangdk2015}
    for the second order estimate \eqref{good1} of complex Monge-Amp\`ere equation for $(n-1)$-PSH functions and complex $k$-Hessian equations   on closed complex manifolds.

In this paper we  derive the gradient estimate by a blow-up argument.
To achieve this,  a specific problem that we have in mind is to establish a quantitative
version of second order boundary estimates,
 which claims  that the  complex Hessian on the boundary can
 be dominated by a quadratic term of   the boundedness  for the gradient of the unknown solution.
In the following theorem, we 
derive such quantitative boundary estimates.

   \begin{theorem}
   \label{boundaryestimatethm}
  Suppose that $(M,J,\omega)$ is a compact Hermitian manifold with real analytic Levi flat boundary.
  Let $\psi\in  C^{1}(\bar M)$ and $\varphi\in C^3(\partial  M)$.
Suppose, in addition to   \eqref{elliptic}-\eqref{addistruc},
 that there exists an admissible subsolution $\underline{u}\in C^{2}(\bar M)$ to  Dirichlet problem   \eqref{mainequ1}.  Then
 for any admissible function  $u \in C^3(M)\cap C^{2}(\bar M)$ solving the Dirichlet problem, we derive
    \begin{equation}
   \label{boundaryestimate1}
   \begin{aligned}
\sup_{\partial   M}|\partial\bar \partial u| \leq C(1+\sup_{ M}|\nabla u|^2),
\end{aligned}
\end{equation}
where $C$ is a uniform positive constant depending only on  $|\varphi|_{C^{3}(\bar M)}$, $|\underline{u}|_{C^{2}(\bar M)}$,   $ |\psi|_{C^{1}(\bar M)}$, $|\chi|_{C^{1}(\bar M)}$,
  $\sup_{\partial M}|\nabla u|$ 
   and other known data under control (but  not on $(\delta_{\psi,f})^{-1}$).
   \end{theorem}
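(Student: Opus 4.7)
My plan is to work locally near each boundary point $p \in \partial M$, choose holomorphic coordinates adapted to the Levi flat foliation, and estimate the three blocks of $\partial\bar\partial u|_{\partial M}$ (tangential-tangential, mixed tangential-normal, normal-normal) separately. Because $\partial M$ is real analytic and Levi flat, near $p$ one can choose holomorphic coordinates $(z^1,\ldots,z^n)$ centered at $p$ with $\partial M = \{\fIm\, z^n = 0\}$ and the Levi foliation given by $\{z^n = c\}$, $c \in \mathbb{R}$. The crucial structural feature of these coordinates is that the defining function $\rho := \fIm\, z^n$ is \emph{pluriharmonic}, i.e.\ $\partial\bar\partial\rho \equiv 0$. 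This is what will allow the estimates to be independent of $(\delta_{\psi,f})^{-1}$: the usual curvature-of-the-boundary contribution to the linearized operator vanishes identically.

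The tangential-tangential block is immediate: the holomorphic vector fields $\partial_{z^\alpha}$, $\alpha < n$, are tangent to $\partial M$, so differentiating $u|_{\partial M} = \varphi$ twice yields $u_{\alpha\bar\beta}|_{\partial M} = \varphi_{\alpha\bar\beta}$ for $\alpha,\beta < n$, controlled by $|\varphi|_{C^2}$. The heart of the argument is the mixed block $u_{\alpha\bar n}|_{\partial M}$. Setting $w = u - \underline u$, which vanishes on $\partial M$, concavity of $f$ combined with the subsolution inequality gives $F^{i\bar j} w_{i\bar j} \leq 0$, where $F^{i\bar j} = \partial F / \partial \mathfrak{g}_{i\bar j}$. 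I would construct a barrier of Guan type on a small half-ball $\Omega = B_\delta(p) \cap M$,
\[
\Psi = A\, w + K\rho - B\rho^2 + N\sum_{i<n}|z^i|^2,
\]
and apply the linearized operator $\mathcal{L} = F^{i\bar j}\partial_i\partial_{\bar j} + \text{(lower order)}$. Pluriharmonicity of $\rho$ kills the $K\rho$ contribution entirely, while for suitable $A, B, N, K$ the remaining terms control the action of $\mathcal{L}$ on $\pm T w$, where $T$ is a tangential derivative operator built from $\partial/\partial x^\alpha, \partial/\partial y^\alpha$ ($\alpha < n$). The maximum principle applied to $\Psi \mp Tw$, combined with the boundary data on $\partial \Omega$, yields the Hopf-type bound $|u_{\alpha\bar n}(p)| \leq C(1 + \sup_{\partial M}|\nabla u|)$.

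With the tangential-tangential and mixed blocks in hand, the normal-normal entry $u_{n\bar n}|_{\partial M}$ is extracted at $p$ by viewing $\mathfrak{g}[u]$ as a Hermitian matrix with a controlled $(n-1)\times(n-1)$ tangential block and a controlled $(n-1)\times 1$ mixed column. Concavity, ellipticity, and the growth hypothesis \eqref{addistruc} then bound $u_{n\bar n}(p)$; this is where the quadratic dependence $\sup_M |\nabla u|^2$ enters, via a Hou-Ma-Wu-type barrier argument that is applied in a neighborhood of $\partial M$ and whose error terms are bounded in terms of the interior gradient.

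The main obstacle is the mixed barrier in the middle step: one has to choose $A, B, N, K$ to simultaneously (i) make $\mathcal{L}\Psi$ negative enough to dominate $\mathcal{L}(\pm Tw)$, (ii) give $\Psi \geq |Tw|$ on $\partial\Omega \setminus \partial M$, and (iii) keep all constants independent of $(\delta_{\psi,f})^{-1}$. A standard barrier would use the gap $\delta_{\psi,f}$ to guarantee a positive lower bound on some $F^{i\bar i}$; here one must instead exploit pluriharmonicity of $\rho$ and the presence of the subsolution term $A\,w$ through the concavity inequality above. A secondary difficulty is tracking dependencies carefully so that the final constant depends only linearly on $\sup_{\partial M}|\nabla u|$ and quadratically on $\sup_M |\nabla u|$, as required by Theorem \ref{boundaryestimatethm} for the subsequent blow-up argument.
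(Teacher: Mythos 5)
Your decomposition of $\partial\bar\partial u|_{\partial M}$ into three blocks (pure tangential, mixed tangential-normal, double normal) matches the paper's structure, and the pure tangential block is indeed trivially controlled by $\varphi$. But there are three substantive gaps.

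\textbf{The double-normal step is where the real work lies, and your account of it is the weakest part.} You assert that ``concavity, ellipticity and the growth hypothesis then bound $u_{n\bar n}(p)$,'' but this does not by itself produce the \emph{quadratic} estimate $\mathfrak{g}_{n\bar n}(p_0)\le C\bigl(1+\sum_{\alpha<n}|\mathfrak{g}_{\alpha\bar n}(p_0)|^2\bigr)$ with $C$ independent of $\sup_M|\nabla u|$, which is what is needed. The paper's key new ingredient (Lemma \ref{refinement2}) is a quantitative sharpening of the classical Caffarelli--Nirenberg--Spruck eigenvalue lemma: it says that once the $(n,n)$ entry $\mathrm{{\bf a}}$ exceeds an \emph{explicit} quadratic expression in the off-diagonal entries $a_\alpha=\mathfrak{g}_{\alpha\bar n}$, the eigenvalues of the $n\times n$ matrix all lie within $\epsilon$ of the diagonal entries. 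This lets one run a contradiction argument with the subsolution (inequality \eqref{opppp}): if $\mathfrak{g}_{n\bar n}(p_0)$ were larger than the quadratic threshold $R_c$, then $F(\mathfrak{g})>\psi$, contradicting the equation. The classical CNS lemma is not enough here because it only gives asymptotic behavior as $\mathrm{{\bf a}}\to\infty$, not the quantitative threshold. Your appeal to ``a Hou-Ma-Wu-type barrier argument applied in a neighborhood of $\partial M$'' is off target: Hou--Ma--Wu belongs to the \emph{interior} estimate (Theorem \ref{globalsecond}); the boundary double-normal bound is a pure linear-algebra consequence of Lemma \ref{refinement2} plus the subsolution and contains no barrier, no gradient, and no neighborhood of $\partial M$.

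\textbf{Your mixed-block barrier is missing a necessary term.} Differentiating the equation along a tangential direction $D$ produces, via Lemma \ref{DN}, an error of the shape $C(1+\sup|\nabla u|)\sum f_i + C\sum f_i|\lambda_i|$. The term $\sum f_i|\lambda_i|$ can be arbitrarily large and is not controlled by $\sum f_i$ alone; the paper kills it by inserting the term $\frac{1}{\sqrt{b_1}}\sum_{\tau<n}|(u-\varphi)_\tau|^2$ into the barrier, whose $\mathcal{L}$-action produces the good quadratic term $\tfrac12\sum_{\tau<n}F^{i\bar j}\mathfrak{g}_{\bar\tau i}\mathfrak{g}_{\tau\bar j}\ge\tfrac14\sum_{i\ne r}f_i\lambda_i^2$ (Proposition 2.19 of Guan) that dominates $\sum f_i|\lambda_i|$ via Cauchy--Schwarz. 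Your barrier $\Psi=Aw+K\rho-B\rho^2+N\sum|z^i|^2$ has no such term, and the $N\sum|z^i|^2$ piece contributes only $CN\sum f_i$, which is insufficient for general $f$ (it suffices for Monge--Amp\`ere but not for this class).

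\textbf{The proposed mechanism for independence of $(\delta_{\psi,f})^{-1}$ is not the right one.} Pluriharmonicity of $\rho=\fIm z^n$ does simplify some terms, but it is not what removes the degeneracy. The actual mechanism in the paper is twofold: (i) Lemma \ref{lxf4} gives a uniform lower bound $\sum f_i\ge\kappa>0$ with $\kappa$ computed from $f((1+c_0)\vec 1)$ and $\sup\psi$, hence independent of $\delta_{\psi,f}$; and (ii) the dichotomy in Proposition \ref{Tangential-Normal derivatives}'s proof (Case I: $|\nu_\lambda-\nu_{\underline\lambda}|\ge\beta$ via Lemma \ref{guan2014}; Case II: otherwise, yielding $f_i\ge\frac{\beta}{\sqrt n}\sum f_j$ for all $i$), both of which produce constants that depend only on the subsolution and $f$. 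Without this subsolution-based dichotomy, you cannot make the barrier constants independent of $\delta_{\psi,f}$.

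In short: the outline is on the right track but the quantitative eigenvalue lemma is the missing core idea of the double-normal step, the mixed-block barrier needs the squared tangential-gradient term, and the $\delta_{\psi,f}$-independence should rest on the subsolution dichotomy rather than on pluriharmonicity of the defining function.
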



The quantitative boundary estimate   \eqref{boundaryestimate1} was  established
by Chen \cite{Chen}     
 for Dirichlet problem of complex Monge-Amp\`ere equation on $X\times A$,
and further by Phong-Sturm \cite{Phong2009Sturm} for Dirichlet problem of
complex Monge-Amp\`ere equation  on compact K\"{a}hler manifolds with  
real analytic Levi flat boundary.
  See also Phong-Song-Sturm's survey \cite{PhongSongSturm-CMA}.
   As in the statement of  Lemma 7.17 in \cite{Boucksom2012}, 
 \eqref{boundaryestimate1}  indeed holds  for  complex  Monge-Amp\`{e}re
 equation on  compact K\"ahler manifolds without assuming the boundary to be real analytic Levi flat. To the best knowledge of the author, 
 only the literature mentioned above has studied this topic.
By using Theorems       \ref{boundaryestimatethm} and \ref{globalsecond}
we can  establish the desired second order estimate of the form \eqref{good1} for Dirichlet problem \eqref{mainequ1}.
 Combining it with Sz\'{e}kelyhidi's \cite{Gabor} Liouville type theorem,
 we can prove  the gradient  bound  via a blow-up  argument, and  then
 obtain  a uniform bound of the complex Hessian
 \begin{equation}
\label{goodgood2}
\begin{aligned}
 \sup_{\bar M}|\partial\overline{\partial} u|\leq C,
 \end{aligned}
\end{equation}
where $C$ is a uniform positive constant which does not depend on the
 solution $u$ and its derivatives.
  With the uniform estimate for complex Hessian at hand, conditions \eqref{elliptic}-\eqref{nondegenerate}
 guarantee   equation \eqref{mainequ1} to be  uniformly elliptic and concave  for \textit{admissible} solutions,
 then the Evans-Krylov theorem \cite{Evans82,Krylov82,Krylov83},
adapted to  complex setting (cf. \cite{TWWYEvansKrylov2015}), yields the $C^{2,\alpha}$ estimate.
We can also follow the line of the proof in \cite{Guan2010Li} to derive the estimates for the real Hessian so that one can directly apply Evans-Krylov theorem.
 The higher order regularity  follows from the standard Schauder  theory.

Finally, we  remark that
 it is pretty hard  to prove gradient bound directly,
  as  B{\l}ocki \cite{Blocki09gradient}, 
  Hanani \cite{Hanani1996}  and Guan-Li \cite{Guan2010Li} did for  complex Monge-Amp\`ere equation,
   and  Zhang \cite{Zhangxw2010} did for  $\omega$-plurisubharmonic solutions of complex $k$-Hessian equations.
Also, the direct proof of gradient estimate of complex inverse
$\sigma_k$ equation was obtained in \cite{Guan2015Sun}. 

\subsection{Sketch proof of Theorem \ref{boundaryestimatethm}}
\label{sketchproof}

It is  interesting  but challenging  to derive such
quantitative boundary estimates for Dirichlet problem  of
  fully non-linear elliptic equations  \eqref{mainequ1}.
We propose  new insights and ideas to overcome the difficulties in this paper.
The proof of the quantitative boundary estimate is based on two ingredients:

We set up Lemma  \ref{refinement2}
  in an attempt to bound 
  double normal derivative of the solution on the boundary.
This lemma states that for a certain Hermitian matrix, if there is a diagonal element, $\mathrm{{\bf a}}$ say,  satisfying a quadratic
 growth condition, 
  then the eigenvalues  concentrate near   the corresponding  diagonal elements.
 Lemma  \ref{refinement2} 
  allows us to prove that, 
   under the assumptions of Theorem \ref{boundaryestimatethm}, 
  the boundary estimates
for double normal derivative can be  dominated by the quadratic term of  the boundedness
for  tangential-normal derivatives of the \textit{admissible} solutions on the boundary (see Proposition \ref{proposition-quar-yuan1}).
 On the other hand, 
we prove in Proposition \ref{Tangential-Normal derivatives}  that the
boundedness of boundary estimates for tangential-normal derivatives
depends linearly on  the 
supremum of gradient term. 
Theorem \ref{boundaryestimatethm} immediately follows from these two steps.  

\subsection{Some phenomena on regularity assumptions}
It should be stressed that 
 there are several 
 notable phenomena
on regularity assumptions on the boundary and boundary data, which are rather different
from the results and counterexamples for real Monge-Amp\`ere equations on domains in Euclidean spaces
due to Caffarelli-Nirenberg-Spruck \cite{CNS-deg} and  Wang \cite{WangXujia1996}.

Following is the explanation.
The tangential operator on the boundary and the distance function $\sigma$ to the boundary   play   important roles in constructing the local barrier functions,
as shown in  the proof of boundary estimates  in  \cite{Caffarelli1985The,CNS1,CNS3,Hoffman1992Boundary,Guan1993Boundary,Guan1998The}.
In our case, 
we apply local coordinate \eqref{holomorphic-coordinate-flat} to 
construct a tangential operator \eqref{tangential-operator12321-meng} 
and a local barrier function $\widetilde{\Psi}$ in \eqref{ggg}.  In the proof of Proposition \ref{Tangential-Normal derivatives} we
  only use  $\sigma$, $\rho$,  $\underline{u}$ and their derivatives up to  second order, $\varphi$ and  its derivatives up to third order.
   It is worth stressing that 
when the boundary data is a constant, the constant in \eqref{tangential-normal} of Proposition \ref{Tangential-Normal derivatives} depends only on
$\partial M$ 
up to second derivatives
and other known data (see Remark \ref{remark4.4}).


As a result, the regularity  assumptions in  Theorem \ref{existence} 
can be further weakened when $M=X\times S$ is a product of a closed complex manifold $X$ with a compact Riemann surface $S$ with boundary.  More precisely,
 if $\psi\in C^{2}(\bar M)$ as well as $\partial M\in C^3$, $\varphi\in C^3$,
then the solution $u$ 
 lies in  $ C^{2,\alpha}(\bar M)$ with   $0<\alpha<1$.
 At least on some convex domains $\Omega\subset \mathbb{R}^2$,
such $C^3$ regularity assumptions on boundary and  boundary data are optimal for Dirichlet problem of
 nondegenerate real Monge-Amp\`ere equations
 as shown by   Wang  \cite{WangXujia1996}, in which
he constructed counterexamples to show that
 if either $\partial \Omega$ or $\varphi$ is only $C^{2,1}$ smooth, then the solution $u$
 to real Monge-Amp\`ere equation on $\Omega\in \mathbb{R}^2$ may fail to be $C^2$ smooth near the boundary.
  For the Dirichlet problem with homogeneous boundary data on 
  products  
   $X\times S$,
    the regularity assumption on the boundary may be further relaxed  to $C^{2,\beta}$ ($0<\beta<1$)
   in this special case.  We shall remark here that the $C^{2,\alpha}$ boundary regularity may follow  from a result of Silvestre-Sirakov \cite{Silvestre2014Sirakov}.

 \begin{theorem}
\label{dege-thm-c2alpha}
Assume $M=X\times S$, 
where $X$ is a closed complex manifold and $S$ is a compact Riemann surface with
$C^{2,\beta}$  boundary  ($0<\beta<1$). Suppose, in addition to  \eqref{elliptic}-\eqref{addistruc}, $\varphi=0$ and 
$\psi\in C^{2}(\bar M)$, that Dirichlet
problem  \eqref{mainequ1} has a $C^2$-smooth admissible subsolution. Then the Dirichlet problem
  supposes a unique $C^{2,\alpha}$ \textit{admissible} solution, where $0<\alpha<1$ depends on $\beta$ and other known data.

\end{theorem}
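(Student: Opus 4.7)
The plan is to combine a boundary-regularity approximation scheme with the \emph{a priori} estimates established earlier in the paper and with a Silvestre--Sirakov type boundary Schauder estimate. Since $\partial S$ is merely $C^{2,\beta}$, Theorem \ref{existence} does not apply directly; the product structure of $M=X\times S$ together with the homogeneity of the boundary data are what let us relax the real analytic Levi flat assumption on the boundary.

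First, I would approximate $S$ from the inside by compact Riemann surfaces $S_j\subset S$ with real analytic boundary, chosen so that $\partial S_j\to\partial S$ in $C^{2,\beta'}$ for some fixed $0<\beta'<\beta$ and with uniformly bounded $C^{2,\beta}$-norms. Set $M_j:=X\times S_j$, mollify $\psi$ to $\psi_j\in C^\infty(\bar M_j)$, and mollify $\underline u|_{\partial M_j}$ along the boundary to produce smooth boundary data $\varphi_j$ with $\varphi_j\to 0$. A small additive perturbation of $\underline u$ then yields a smooth admissible subsolution on $\bar M_j$ whose boundary value is $\varphi_j$, so Theorem \ref{existence} supplies a smooth admissible solution $u_j$ on each $\bar M_j$.

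The crucial step is to obtain a uniform bound $|u_j|_{C^{1,1}(\bar M_j)}\le C$ independent of $j$. Since $\varphi_j$ is close to a constant, the refinement noted in Remark \ref{remark4.4} makes the constant in Proposition \ref{Tangential-Normal derivatives} depend on $\partial M_j$ only through second derivatives, so Theorem \ref{boundaryestimatethm} gives
\[
\sup_{\partial M_j}|\partial\bar\partial u_j|\le C\bigl(1+\sup_{M_j}|\nabla u_j|^2\bigr)
\]
with $C$ controlled uniformly by $\sup_j\|\partial S_j\|_{C^{2,\beta}}$. Sz\'ekelyhidi's global second order estimate then propagates this bound to the interior, and the blow-up argument based on Sz\'ekelyhidi's Liouville type theorem produces a uniform gradient bound. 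Combining these yields the desired uniform $C^{1,1}$ bound, and Arzel\`a--Ascoli lets a subsequence $u_j$ converge in $C^{1,\alpha}$ to a strong solution $u\in C^{1,1}(\bar M)$ with $u=0$ on $\partial M$.

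Once $u\in C^{1,1}$, the nondegeneracy assumption \eqref{nondegenerate} confines $\lambda(\mathfrak g[u])$ to a compact subset of $\Gamma$, so the equation becomes uniformly elliptic and concave in $D^2 u$. Interior $C^{2,\alpha}$ regularity then follows from the complex Evans--Krylov theorem, while the homogeneous boundary data $\varphi=0$ combined with the $C^{2,\beta}$ regularity of $\partial M$ places the problem within the scope of Silvestre--Sirakov's boundary Schauder estimate for fully nonlinear uniformly elliptic concave equations, giving $C^{2,\alpha}$ regularity up to $\partial M$ for some $\alpha=\alpha(\beta,\text{data})\in(0,1)$. Uniqueness is a standard consequence of the comparison principle for admissible solutions. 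The main obstacle is verifying that the constant-boundary-data refinement of Proposition \ref{Tangential-Normal derivatives}, which eliminates third-derivative dependence on $\partial M$, remains robust under the approximation scheme, so that the uniform $C^{1,1}$ bound survives when $\partial S_j$ converges to a boundary that is only $C^{2,\beta}$.
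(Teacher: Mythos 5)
The paper does not actually supply a proof of Theorem~\ref{dege-thm-c2alpha}: it is stated after a one-sentence remark that the $C^{2,\alpha}$ boundary regularity ``may follow'' from Silvestre--Sirakov, so your proposal is filling in an argument the paper leaves implicit. The broad strategy (exhaust $S$ by subsurfaces with real analytic boundary so that Theorem~\ref{existence} applies, derive uniform $C^{1,1}$ bounds from the quantitative boundary estimate together with Sz\'ekelyhidi's global estimate and Liouville blow-up, pass to the limit, then invoke Evans--Krylov interior plus Silvestre--Sirakov boundary Schauder) is the right one and is what the paper's comments around Remark~\ref{remark4.4} suggest.

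However, the step that you yourself flag as ``the main obstacle'' is a genuine gap, not merely a point requiring routine verification. You set $\varphi_j$ equal to a mollification of $\underline u|_{\partial M_j}$ and then invoke Remark~\ref{remark4.4} on the grounds that $\varphi_j$ is ``close to a constant.'' Remark~\ref{remark4.4} is stated \emph{only for $\varphi$ identically constant}; in the general statement of Proposition~\ref{Tangential-Normal derivatives} the constant depends on $|\varphi|_{C^3}$, because the barrier $\widetilde\Psi$ in \eqref{ggg} involves $D(u-\varphi)$ and $\sum_\tau|(u-\varphi)_\tau|^2$, whose linearization $\mathcal L D(u-\varphi)$ brings in three derivatives of $\varphi$. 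Since $\underline u$ is only $C^2$, mollifying $\underline u|_{\partial M_j}$ at a scale $\epsilon_j\to 0$ keeps $|\varphi_j|_{C^2}$ bounded but sends $|\varphi_j|_{C^3}$ to infinity like $\epsilon_j^{-1}$, so the constant in \eqref{tangential-normal} is \emph{not} uniform in $j$ and the $C^{1,1}$ bound does not survive the limit. ``Close to a constant in $C^2$'' is strictly weaker than what the refinement requires.

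The fix is to keep the boundary data \emph{exactly} zero on each $\partial M_j$ and move the perturbation onto the subsolution and the right-hand side instead. Concretely: replace $\psi$ by $\psi-\varepsilon_j$ for a small $\varepsilon_j\downarrow 0$ (so $\underline u$ becomes a \emph{strict} subsolution, and \eqref{nondegenerate} still holds for $j$ large); let $h_j$ solve a linear Dirichlet problem on $M_j$ with $h_j=\underline u$ on $\partial M_j$, and set $\underline u_j=\underline u-h_j$. Because $\underline u=0$ on $\partial M$ and $\underline u\in C^2$, one has $\|h_j\|_{C^2(\bar M_j)}\to 0$, so the perturbation of $\mathfrak g$ is absorbed by the strictness $\varepsilon_j$. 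Then mollify $\underline u_j$ in the interior (again using the strictness margin and concavity of $F$ to preserve the subsolution inequality) to obtain a $C^3$ subsolution vanishing on $\partial M_j$, so that Theorem~\ref{existence} applies with $\varphi_j\equiv 0$ and Remark~\ref{remark4.4} applies \emph{literally}, with constants controlled only by $\sup_j\|\partial S_j\|_{C^2}$, $|\underline u_j|_{C^2}$ and $|\psi|_{C^1}$. With that modification the rest of your outline --- uniform $C^{1,1}$, compactness of $\lambda(\mathfrak g[u])$ in $\Gamma$ from nondegeneracy, Evans--Krylov and Silvestre--Sirakov --- goes through as you describe.

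Two smaller points you should tighten if you write this out in full: (i) the passage from a $C^2$ subsolution to a $C^3$ one is not ``a small additive perturbation'' --- you need the concavity/Jensen argument (mollification of $F(\mathfrak g[\underline u])$ versus $F$ of the mollification) together with the strictness margin, which is exactly why introducing $\psi-\varepsilon_j$ is not optional; and (ii) the Silvestre--Sirakov boundary estimate is formulated for uniformly elliptic concave operators on Euclidean domains with $C^2$ (resp.\ $C^{2,\gamma}$) boundary, so you should say explicitly that you flatten $\partial S$ by a $C^{2,\beta}$ diffeomorphism (not a biholomorphism) and check that the resulting operator in the new coordinates still falls in their class after the $C^{1,1}$ bound has been established.
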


For the Dirichlet problem of degenerate fully non-linear elliptic equations on $M=X\times S$,
the regularity assumptions in Theorem \ref{existencede} 
can also be further weakened to 
$\varphi\in C^{2,1}$ and $\partial M\in C^{2,1}$.
Furthermore, as in Theorem \ref{dege-thm-c2alpha}  we have the corresponding
 result for degenerate equations when $M=X\times S$.
Such regularity assumptions on boundary and   boundary data
are impossible for homogeneous real Monge-Amp\`ere equation on certain
bounded domains $\Omega\subset \mathbb{R}^n$ as shown by 
\cite{CNS-deg}, in which
 the $C^{3,1}$-regularity assumptions on   boundary and boundary data
are optimal for the optimal $C^{1,1}$ global regularity of the weak solution to homogeneous real Monge-Amp\`ere equation 
on $\Omega$.

 Notice that the boundary of a
 $C^2$  bounded domain $\Omega\in \mathbb{R}^n$ has  at least one strictly convex point at which every principal curvature of $\partial\Omega$ is positive, to be compared with the Levi flatness of $M=X\times S$.
These new features show that the assumption on the boundary
in the corresponding theorem 
is essential for improving the regularity assumptions
on the boundary and the boundary data.

 \subsection*{Organization}
The rest of this paper is organized as follows.
 In Section \ref{Preliminaries} we outline the notation and  some useful results.
In Section \ref{refinementofCNS}
  we derive a quantitative lemma which is the crucial ingredient in the proof of quantitative boundary estimates.
In Section \ref{preciseboundaryestimates} we  establish quantitative boundary estimates.
In Section \ref{solvingequation} we solve the Dirichlet problem by using the method of continuity and approximation.
In Section \ref{RiemannDirichlet} we briefly investigate the solvability and  regularity of Dirichlet problem for
fully non-linear elliptic equations on Riemannian manifolds with concave boundary. 
  In Appendix \ref{Appendix} we finally present
a characterization of the level sets of $f$   satisfying  \eqref{elliptic}, \eqref{concave} and  \eqref{addistruc},
which plays an important role in proving the boundary estimates for `tangential-normal' derivatives.

 \subsection*{Acknowledgements}
The work was done while I was visiting University of Science and Technology of China in semester  2016/2017.
I wish to express my gratitude to  Professor Xi Zhang for his kindly support and thank the Department and the
University for their hospitality.



\section{Preliminaries}
\label{Preliminaries}

In this section, we  outline and set up some useful results.
For convenience we first denote  $$\vec{1}=(1,\cdots,1), \mbox{  }
\lambda[v]=\lambda(\mathfrak{g}[v]),  \mbox{  } \lambda=\lambda[u], \mbox{  } \underline{\lambda}=\lambda[\underline{u}].$$



Based on a characterization of level sets of $f$ satisfying \eqref{elliptic}, \eqref{concave} and \eqref{addistruc},
we  give a new proof of   Part (b) of Lemma 9 in \cite{Gabor}.
We  state it as follows.
\begin{lemma}
\label{lxf4}
Suppose   \eqref{elliptic}, \eqref{concave} and  \eqref{addistruc} hold.
Then for $\sigma\in (\sup_{\partial\Gamma}f, \sup_{\Gamma}f)$ and $t>0$, we have
\begin{equation}
\label{yuan-lxf4}
\begin{aligned}
\sum_{i=1}^n f_i(\lambda) > \frac{f(t\vec{1})-\sigma}{t} \mbox{ in } \partial\Gamma^\sigma:=\{\lambda\in \Gamma: f(\lambda)=\sigma\}.  
\end{aligned}
\end{equation}
In particular, 
if $u$ is a $C^2$ admissible solution of equation \eqref{mainequ1}
 then
\begin{equation}
\label{lxf3}
\begin{aligned}
\sum_{i=1}^n f_i(\lambda(\mathfrak{g}[u]))\geq \kappa>0.
\end{aligned}
\end{equation}
where   $\kappa=\frac{f((1+c_0)\vec{1})-\sup_M \psi}{1+c_0}$, which is  independent of
 $\delta_{\psi,f}$, and $c_0$ is the positive constant satisfying $f(c_0\vec{1})=\sup_{M}\psi$.
\end{lemma}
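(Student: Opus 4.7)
The strategy combines the supporting-hyperplane inequality from concavity of $f$ with a one-variable argument along the ray through $\lambda$, where assumption \eqref{addistruc} enters.

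First I would fix $\lambda \in \partial\Gamma^\sigma$ and $t>0$, and apply concavity of $f$ at the point $\lambda$ in the direction of $t\vec{1}\in\Gamma$:
\begin{equation*}
f(t\vec{1}) \;\leq\; f(\lambda) + \sum_{i=1}^n f_i(\lambda)(t-\lambda_i)
\;=\; \sigma \;+\; t\sum_{i=1}^n f_i(\lambda) \;-\; \sum_{i=1}^n f_i(\lambda)\lambda_i.
\end{equation*}
Rearranging shows that \eqref{yuan-lxf4} will follow as soon as one establishes the ray-positivity
\begin{equation*}
\sum_{i=1}^n f_i(\lambda)\lambda_i \;>\; 0,
\end{equation*}
so the lemma reduces to this single claim.

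To prove the ray-positivity I would study the one-variable function $g(s):=f(s\lambda)$ on $(0,\infty)$. Since $f$ is concave in $\Gamma$ and the ray $\{s\lambda:s>0\}\subset\Gamma$, the function $g$ is concave, hence $g'$ is non-increasing; a direct computation gives $g'(1)=\sum_i f_i(\lambda)\lambda_i$. Assumption \eqref{addistruc} states that $\lim_{s\to\infty}g(s)>\sigma'$ for every $\sigma'<\sup_\Gamma f$, which forces $\lim_{s\to\infty}g(s)=\sup_\Gamma f>\sigma=g(1)$. If one had $g'(1)\leq 0$, then by concavity $g'(s)\leq 0$ on $[1,\infty)$ and so $g$ would be non-increasing there, contradicting $\lim_{s\to\infty}g(s)>g(1)$. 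Therefore $g'(1)>0$, and combining with the supporting-hyperplane inequality above yields \eqref{yuan-lxf4}.

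For the particular consequence \eqref{lxf3}, I would apply \eqref{yuan-lxf4} pointwise with $\sigma=\psi(x)$ and $t=1+c_0$. Admissibility of $u$ gives $\lambda(\mathfrak{g}[u](x))\in\Gamma$ with $f(\lambda(\mathfrak{g}[u](x)))=\psi(x)$; strict monotonicity of $s\mapsto f(s\vec{1})$ (from \eqref{elliptic}) together with $f(c_0\vec{1})=\sup_M\psi<\sup_\Gamma f$ shows $\psi(x)\leq\sup_M\psi<f((1+c_0)\vec{1})\leq\sup_\Gamma f$, so the hypothesis of \eqref{yuan-lxf4} is met. The inequality then yields
\begin{equation*}
\sum_{i=1}^n f_i(\lambda(\mathfrak{g}[u])) \;>\; \frac{f((1+c_0)\vec{1})-\psi(x)}{1+c_0}
\;\geq\; \frac{f((1+c_0)\vec{1})-\sup_M\psi}{1+c_0} \;=\; \kappa,
\end{equation*}
and one reads off that $\kappa$ depends only on $\sup_M\psi$ through $c_0$, not on the non-degeneracy gap $\delta_{\psi,f}$.

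\textbf{Main obstacle.} The only delicate point is upgrading $g'(1)\geq 0$ to the strict inequality $g'(1)>0$; this is precisely where \eqref{addistruc} is needed and is what prevents a purely local concavity argument from sufficing. Everything else is an application of the tangent-plane inequality.
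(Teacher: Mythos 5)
Your top-level step is the same as the paper's: apply the supporting-hyperplane inequality of the concave $f$ at $\lambda\in\partial\Gamma^\sigma$ against the reference point $t\vec{1}$, reducing \eqref{yuan-lxf4} to the ray-positivity $\sum_i f_i(\lambda)\lambda_i>0$. The genuine difference is in how that ray-positivity is obtained. The paper cites Lemma~\ref{asymptoticcone1} from Appendix~\ref{Appendix}, whose proof runs through the geometric machinery about level sets: a supporting-tangent-plane argument for the convex hypersurface $\partial\Gamma^\sigma$, an intermediate-value step producing a point $\lambda'$ with $\lambda'\cdot\nu_{\lambda'}=0$, and then a perturbed ray $\{t(\lambda'-\epsilon\nu_{\lambda'})\}$ that misses $\partial\Gamma^\sigma$, contradicting \eqref{addistruc} (Corollary~\ref{luxiaofengg1} and the preceding propositions). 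You instead restrict $f$ to the ray $g(s)=f(s\lambda)$, observe that $g$ is concave with $g'(1)=\sum_i f_i(\lambda)\lambda_i$, and note that $g'(1)\leq 0$ would force $g$ non-increasing on $[1,\infty)$, incompatible with $\lim_{s\to\infty}g(s)>\sigma=g(1)$ coming from \eqref{addistruc}. This is a clean one-variable argument that avoids the appendix entirely and is arguably more transparent; the paper's route buys the structural level-set characterization (Proposition~\ref{charac}) which it reuses elsewhere, but for Lemma~\ref{lxf4} alone your derivation is a genuine simplification. Your deduction of \eqref{lxf3} matches the paper's (apply the inequality with $t=1+c_0$); one small point you leave implicit is that $\psi(x)>\sup_{\partial\Gamma}f$, which is needed to place $\psi(x)$ in the admissible range of $\sigma$ and which follows from the non-degeneracy assumption \eqref{nondegenerate}. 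The argument is correct.
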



 \begin{proof}
In Appendix \ref{Appendix} we obtain a characterization of level sets of $f$ when it satisfies \eqref{elliptic}, \eqref{concave} and \eqref{addistruc}.
The proof of Lemma \ref{lxf4} is based on this characterization.
Fix $t>0$.
By \eqref{concave}   and Lemma \ref{asymptoticcone1},   one has
\begin{equation}
\begin{aligned}
t\sum f_i(\lambda)\geq  \sum_{i=1}^n  f_i(\lambda)\lambda_i+f(t\vec{1})-f(\lambda)
>   f(t\vec{1})-\sigma  \mbox{ in }  \partial\Gamma^\sigma.  \nonumber
\end{aligned}
\end{equation}
 Furthermore, \eqref{lxf3} holds for $\kappa=\frac{f((1+c_0)\vec{1})-\sup_M \psi}{1+c_0}$ by setting $t= 1+c_0$.
 \end{proof}

 An important ingredient in the proof of  \textit{a priori} estimates for fully non-linear elliptic equations is the following lemma.
\begin{lemma}
[\cite{GSS14}, Lemma 2.2]
\label{guan2014}
Suppose that $f$ satisfies \eqref{elliptic} and \eqref{concave}.  
 Let $K$ be a compact subset of $\Gamma$ and $\beta>0$. There is a constant $\varepsilon>0$ such that,
 for any $\mu\in K$ and $\lambda\in \Gamma$, when $|\nu_{\mu}-\nu_{\lambda}|\geq \beta$,
\begin{equation}
\label{2nd}
\begin{aligned}
\sum f_{i}(\lambda)(\mu_{i}-\lambda_{i})\geq f(\mu)-f(\lambda)+\varepsilon (1+\sum f_{i}(\lambda)),
\end{aligned}
\end{equation}
where $\nu_{\lambda}=Df(\lambda)/|Df(\lambda)|$ denotes the unit normal vector to the level surface of
 $f$ through $\lambda$.

\end{lemma}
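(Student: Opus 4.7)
The plan is to upgrade the standard concavity bound $\sum f_i(\lambda)(\mu_i-\lambda_i) \geq f(\mu)-f(\lambda)$ by applying concavity at $\lambda$ not to $\mu$ itself, but to a small perturbation $\mu_t := \mu + t\eta$ whose direction encodes the hypothesis $|\nu_\mu - \nu_\lambda|\geq\beta$. Take $\eta := \nu_\mu - \nu_\lambda$; since $|\eta|^2 = 2(1-\langle\nu_\mu,\nu_\lambda\rangle) \geq \beta^2$, one gets at once the dual inner-product estimates
\[
\langle Df(\mu),\eta\rangle \geq \tfrac{\beta^2}{2}|Df(\mu)|, \qquad \langle Df(\lambda),\eta\rangle \leq -\tfrac{\beta^2}{2}|Df(\lambda)|.
\]
The first says $f$ strictly increases from $\mu$ along $\eta$ at a quantitative rate, while the second says that from the viewpoint of the linearization of $f$ at $\lambda$, moving from $\mu$ to $\mu_t$ lowers the upper bound furnished by concavity; together these will create a gap.

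To make the construction uniform I would use the compactness of $K\subset\Gamma$. Let $d_K := \mathrm{dist}(K,\partial\Gamma)>0$ and restrict $t\in(0,d_K/2]$; since $|\eta|\leq 2$ the entire segment $[\mu,\mu_t]$ then lies in a fixed compact set $K'\Subset\Gamma$ depending only on $K$. On $K'$ we have uniform bounds $|Df(\mu)|\geq c_K>0$ and $\|D^2 f\|_{L^\infty(K')}\leq C_K$, and a second-order Taylor expansion at $\mu$ gives
\[
f(\mu_t) \geq f(\mu) + t\langle Df(\mu),\eta\rangle - C_K t^2.
\]
Compactness also gives $\sum f_i(\lambda) \leq \sqrt n\,|Df(\lambda)|$ by Cauchy--Schwarz, which will be the mechanism that upgrades $|Df(\lambda)|$ to $\sum f_i(\lambda)$ later.

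Now I combine this Taylor lower bound with the concavity upper bound $f(\mu_t) \leq f(\lambda) + \sum f_i(\lambda)(\mu_i-\lambda_i) + t\langle Df(\lambda),\eta\rangle$ and insert the two inner-product estimates to get
\[
\sum f_i(\lambda)(\mu_i-\lambda_i) - (f(\mu)-f(\lambda)) \;\geq\; \tfrac{t\beta^2}{2}\bigl(|Df(\mu)|+|Df(\lambda)|\bigr) - C_K t^2.
\]
Choosing a fixed small $t=t_0$ (depending only on $c_K, C_K, \beta$) absorbs $C_K t_0^2$ into $\tfrac{t_0\beta^2}{2}|Df(\mu)|\geq \tfrac{t_0\beta^2 c_K}{2}$, and Cauchy--Schwarz converts the $|Df(\lambda)|$ term into $\tfrac{1}{\sqrt n}\sum f_i(\lambda)$. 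Setting $\varepsilon:=\min\{t_0 c_K\beta^2/4,\ t_0\beta^2/(2\sqrt n)\}$ yields \eqref{2nd}. The step I expect to be the main obstacle is making everything uniform in $\lambda$, since $\lambda$ ranges over the non-compact cone $\Gamma$ and $|Df(\lambda)|, \sum f_i(\lambda)$ are unbounded; the resolution is the observation that the Taylor remainder is evaluated only on the segment $[\mu,\mu_t]\subset K'$ which depends only on $K$, while the $\lambda$-dependence enters the final estimate only through $|Df(\lambda)|$, which carries the favorable sign and is bounded below by $\sum f_i(\lambda)/\sqrt n$.
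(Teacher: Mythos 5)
Your argument is correct, and it is a clean, self-contained proof. The paper does not actually reproduce a proof of this lemma --- it cites \cite{GSS14} for the statement, remarks on what $\varepsilon$ depends on, and refers the reader to that source for the details --- so there is no in-paper argument to compare against. Your perturbation scheme (set $\mu_t=\mu+t\eta$ with $\eta=\nu_\mu-\nu_\lambda$, then play the second-order Taylor expansion at $\mu$ against the concavity inequality based at $\lambda$) captures the right mechanism: the hypothesis $|\nu_\mu-\nu_\lambda|\geq\beta$ is exactly what converts into the two signed inner-product estimates via $|\eta|^2=2(1-\langle\nu_\mu,\nu_\lambda\rangle)$, and the only $\lambda$-dependence in the resulting gap is through $|Df(\lambda)|$, which carries the favorable sign and controls $\sum f_i(\lambda)$ by Cauchy--Schwarz, so no compactness in $\lambda$ is ever needed. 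A few cosmetic tightenings: you should take, say, $t\in(0,d_K/3]$ rather than $(0,d_K/2]$ so that $|\mu_t-\mu|\leq 2t<d_K$ holds strictly and the segment $[\mu,\mu_t]$ genuinely stays in a compact $K'\Subset\Gamma$; the inequality $\sum f_i(\lambda)\leq\sqrt n\,|Df(\lambda)|$ is a pointwise Cauchy--Schwarz fact using $f_i>0$ and has nothing to do with compactness; and the Taylor remainder should read $2C_Kt^2$ (or redefine $C_K$ to absorb $|\eta|^2\leq 4$). With those adjustments the chain of inequalities closes and yields $\varepsilon=\min\{t_0c_K\beta^2/4,\ t_0\beta^2/(2\sqrt n)\}$ as you state.
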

For the given \textit{admissible} subsolution $\underline{u}$,
$\underline{\lambda}$ falls in a compact subset of $\Gamma$. We take
\begin{equation}
\label{beta}
\begin{aligned}
\beta:= \frac{1}{2}\min_{\bar M} \mathrm{dist}(\nu_{\underline{\lambda}}, \partial \Gamma_{n})>0.
\end{aligned}
\end{equation}
It follows from  Lemma \ref{guan2014} and Lemma 6.2 in \cite{CNS3}
 that when $|\nu_{\lambda }-\nu_{\underline{\lambda}}|\geq \beta$  we have
\begin{equation}
\begin{aligned}
\mathcal{L}(\underline{u}-u)\geq \varepsilon(1 + \sum_{i=1}^n f_i(\lambda)). \nonumber
\end{aligned}
\end{equation}
While   $|\nu_{\lambda}-\nu_{\underline{\lambda}}|< \beta$  ensures
$\nu_{\lambda}-\beta \vec{1} \in \Gamma_{n}$ and
\begin{equation}
\begin{aligned}
f_{i} (\lambda)\geq  \frac{\beta }{\sqrt{n}} \sum_{j=1}^n f_{j}(\lambda) \mbox{ for each } i=1,\cdots, n. \nonumber
\end{aligned}
\end{equation}
From the original proof of Lemma 2.2 in \cite{GSS14},
we know that the constant $\varepsilon$ in this lemma depends only on
$\underline{\lambda}$, $\beta$ and other known data.
 Please refer to \cite{GSS14} for the detail.

In analogy with Lemma \ref{guan2014}, Sz\'ekelyhidi  also proved the following lemma.
\begin{lemma}
 [\cite{Gabor}, Proposition 5]
\label{gabor'lemma}
Let $f$ be the function satisfying \eqref{elliptic} and \eqref{concave},
and let $\sup_{\partial \Gamma}f <\sigma<\sup_\Gamma f$.
Suppose that there exists a  $\mathcal{C}$-subsolution $\underline{u}\in C^{2}(\bar M)$.
Then there exist two uniform positive constants $R_0$ and $ \varepsilon_1$,
such that if $\lambda\in \partial\Gamma^\sigma$ and
 $|\lambda|\geq R_0$, then   either
\begin{equation}
\label{haha1}
\begin{aligned}
\sum_{i=1}^n f_i(\lambda)(\underline{\lambda}_i - \lambda_i)
\geq  \varepsilon_1 \sum_{j=1}^n f_j (\lambda),
\end{aligned}
\end{equation}
or $f_i(\lambda)\geq  \varepsilon_1 \sum_{j=1}^n f_j (\lambda) \mbox{ for each } i=1,\cdots, n.$
\end{lemma}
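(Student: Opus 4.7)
The plan is to argue by contradiction using a normalization (blow-up) argument, combining concavity of $f$, the uniform lower bound $\sum_j f_j(\lambda)\ge\kappa>0$ furnished by Lemma~\ref{lxf4}, and the geometric content of the $\mathcal{C}$-subsolution hypothesis together with the asymptotic-cone characterization developed in Appendix~\ref{Appendix}. Suppose the conclusion fails. Then at some point $x_0\in\bar M$ there is a sequence $\lambda^{(k)}\in\partial\Gamma^\sigma$ with $|\lambda^{(k)}|\to\infty$ violating both alternatives at the level $\varepsilon_1=1/k$. Set
\[
c_i^{(k)}:=f_i(\lambda^{(k)})\Big/\sum_j f_j(\lambda^{(k)}),\qquad \mu^{(k)}:=\lambda^{(k)}/|\lambda^{(k)}|,
\]
so that $c^{(k)}$ lies in the unit simplex and $\mu^{(k)}\in\bar\Gamma$ is a unit vector. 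Passing to subsequences, $c^{(k)}\to c^*$ and $\mu^{(k)}\to\mu^*$; after relabeling, the failure of the second alternative gives $c^*_{i_0}=0$ for some fixed index $i_0$.

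First I would extract a tangency relation $c^*\cdot\mu^*=0$. Concavity of $f$ applied at $\lambda^{(k)}$ with $y=0$ yields $\sum_i f_i(\lambda^{(k)})\lambda_i^{(k)}\le\sigma-f(0)$; dividing by $|\lambda^{(k)}|\sum_j f_j(\lambda^{(k)})$ and using the lower bound from Lemma~\ref{lxf4} gives $c^*\cdot\mu^*\le 0$ in the limit. Conversely, the failure of the first alternative reads
\[
\sum_i c_i^{(k)}\underline{\lambda}_i(x_0)-|\lambda^{(k)}|\sum_i c_i^{(k)}\mu_i^{(k)}<1/k,
\]
and since the first sum is bounded while $|\lambda^{(k)}|\to\infty$, this forces $c^*\cdot\mu^*\ge 0$. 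Hence $c^*\cdot\mu^*=0$.

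Next I would invoke the $\mathcal{C}$-subsolution hypothesis at $x_0$, which asserts that $(\underline{\lambda}(x_0)+\bar\Gamma_n)\cap\partial\Gamma^\sigma$ is bounded. Consequently for all large $k$ some coordinate $j_k$ satisfies $\lambda_{j_k}^{(k)}<\underline{\lambda}_{j_k}(x_0)$; after a subsequence we may take $j_k\equiv j_0$, whence $\mu^*_{j_0}\le 0$. Using condition~\eqref{addistruc} together with the asymptotic-cone description of $\partial\Gamma^\sigma$ provided by Lemma~\ref{asymptoticcone1}, I then select $\tau\in\Gamma_n$ with $f(\underline{\lambda}(x_0)+\tau)>\sigma$. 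Applying concavity at $\lambda^{(k)}$ to $y=\underline{\lambda}(x_0)+\tau$ and combining with the failure of the first alternative yields, after dividing by $\sum_j f_j(\lambda^{(k)})$,
\[
\sum_i c_i^{(k)}\tau_i\ge\frac{f(\underline{\lambda}(x_0)+\tau)-\sigma}{\sum_j f_j(\lambda^{(k)})}-\frac{1}{k}.
\]
A case analysis according to whether $\sum_j f_j(\lambda^{(k)})$ stays bounded or diverges, combined with the constraints $c^*\cdot\mu^*=0$, $c^*_{i_0}=0$, $\mu^*_{j_0}\le 0$, and $\mu^*\in\bar\Gamma\setminus\{0\}$, then contradicts the structure of the limit $c^*$: the geometry of $\bar\Gamma$ forces $\mu^*$ to possess a strictly positive coordinate, which must be matched by a strictly positive weight in $c^*$ incompatible with the vanishing along the direction encoded by the $\mathcal{C}$-subsolution.

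The hard part will be the final contradiction step: sharpening the combination $c^*\cdot\mu^*=0$, $c^*_{i_0}=0$, $\mu^*_{j_0}\le 0$, and $\mu^*\in\bar\Gamma\setminus\{0\}$ into a genuine incompatibility with the concavity estimate displayed above. The crucial ingredient here is the asymptotic-cone description of Appendix~\ref{Appendix}, which converts the growth assumption \eqref{addistruc} into a concrete statement about the shape of $\partial\Gamma^\sigma$ at infinity; without it one cannot guarantee the vector $\tau\in\Gamma_n$ with $f(\underline{\lambda}(x_0)+\tau)>\sigma$ and hence cannot close the argument.
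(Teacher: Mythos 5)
The paper itself does not prove this statement: it is quoted verbatim from Sz\'ekelyhidi \cite{Gabor}, Proposition~5, and the text simply points the reader there. So there is no internal proof to compare against; I will instead assess whether your argument would stand on its own.

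Your setup --- argue by contradiction along a sequence $\lambda^{(k)}\in\partial\Gamma^\sigma$ with $|\lambda^{(k)}|\to\infty$ and both alternatives failing at level $1/k$, normalize to $c^{(k)}=Df(\lambda^{(k)})/\sum_j f_j$ and $\mu^{(k)}=\lambda^{(k)}/|\lambda^{(k)}|$, pass to limits, and exploit concavity together with the bounded-intersection property of the $\mathcal{C}$-subsolution --- is the right template and is in the spirit of Sz\'ekelyhidi's own proof. The extraction of $c^*\cdot\mu^*=0$, $c^*_{i_0}=0$, and $\mu^*_{j_0}\le 0$ is correct modulo minor technicalities (you should take $y=t\vec 1$ rather than $y=0$ in the concavity inequality, since $f(0)$ need not be defined, and you must pass to a subsequence of base points $x_k\in\bar M$, not fix $x_0$ in advance).

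However, there is a genuine gap in the final step, and you essentially concede it: the constraints you assemble do not produce a contradiction. For $n=2$, take $i_0=1$, $c^*=(0,1)$, and $\mu^*=(1,0)$. Then $c^*\cdot\mu^*=0$, $c^*_{i_0}=0$, $\mu^*_{j_0}\le 0$ for $j_0=2$, and $\mu^*\in\bar\Gamma$ with $|\mu^*|=1$, so your limiting data is perfectly consistent and nothing forces $\mu^*$'s positive coordinate to carry a positive $c^*$-weight. The extra inequality $\sum_i c_i^{(k)}\tau_i\ge \bigl(f(\underline{\lambda}+\tau)-\sigma\bigr)/\sum_j f_j - 1/k$ passes to $c^*\cdot\tau\ge 0$ (if $\sum f_j\to\infty$) or to a bound that is anyway implied by $c^*\ge 0$ and $\tau\in\Gamma_n$; it adds nothing. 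The missing ingredient is the quantitative margin that the $\mathcal{C}$-subsolution provides: by compactness of $\bar M$ there exist uniform $\delta,R>0$ such that $(\underline{\lambda}(x)-2\delta\vec 1+\Gamma_n)\cap\partial\Gamma^\sigma$ is bounded for every $x$, which (after ordering the $\lambda_i$, so that $f_1\le\cdots\le f_n$) pins down that the index $j_0$ with $\lambda_{j_0}<\underline{\lambda}_{j_0}-2\delta$ must coincide with the index carrying the largest $f_j$, and this $\delta$-gap is what one then feeds into the concavity inequality to extract $\varepsilon_1$. Without tracking this margin, the limits you compute are compatible with nothing going wrong. A separate issue: you invoke \eqref{addistruc} to produce $\tau$, but \eqref{addistruc} is not among the hypotheses of this lemma, and Sz\'ekelyhidi's Proposition~5 does not use it; its appearance signals that you are reaching for the wrong structural input.
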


It shall be noted that any \textit{admissible} subsolution satisfying \eqref{existenceofsubsolution}
  is clearly the $\mathcal{C}$-subsolution  introduced by Sz\'ekelyhidi \cite{Gabor},  so we can
  apply either  Lemma \ref{guan2014} or Lemma \ref{gabor'lemma}  to derive the estimates established in this paper.

 \vspace{1mm}
Throughout this paper we use derivatives with respect to Chern connection $\nabla$
 associated with $\omega$, and in local coordinates $z = (z_1,\cdots,z_n)$
 use notations such as
\begin{equation}
\begin{aligned}
v_i=\nabla_{\frac{\partial}{\partial z_{i}}} v, \mbox{  }
 v_{ij}=\nabla_{\frac{\partial}{\partial z_{j}}}\nabla_{\frac{\partial}{\partial z_{i}}} v,
 \mbox{  }
v_{i\bar j}=\nabla_{\frac{\partial}{\partial \bar z_{j}}}\nabla_{\frac{\partial}{\partial z_{i}}} v, \cdots.  \nonumber
\end{aligned}
\end{equation}

We denote
$\mathfrak{g}=\mathfrak{g}[u]$ and $\mathfrak{\underline{g}}=\mathfrak{g}[\underline{u}]$
for the solution $u$ and the subsolution $\underline{u}$ respectively.
Given a Hermitian matrix $A=\{a_{i\bar j}\}$, we write $F^{i\bar j}(A) = \frac{\partial F}{\partial a_{i\bar j}} (A).$


\section{Quantitative Lemmas}
\label{refinementofCNS}

 In this section we set up a lemma which states that
 if the parameter $\mathrm{{\bf a}}$ satisfies a quadratic
 growth condition 
 then  the eigenvalues concentrate 
 near   the corresponding diagonal elements.
 Namely,
 \begin{lemma}
\label{refinement2}
Let $A$ be an $n\times n$ Hermitian matrix
\begin{equation}\label{matrix3}\left(\begin{matrix}
d_1&&  &&a_{1}\\ &d_2&& &a_2\\&&\ddots&&\vdots \\ && &  d_{n-1}& a_{n-1}\\
\bar a_1&\bar a_2&\cdots& \bar a_{n-1}& \mathrm{{\bf a}} \nonumber
\end{matrix}\right)\end{equation}
with $d_1,\cdots, d_{n-1}, a_1,\cdots, a_{n-1}$ fixed, and with $\mathrm{{\bf a}}$ variable.
Denote $\lambda=(\lambda_1,\cdots, \lambda_n)$ by   the eigenvalues of $A$.
Let $\epsilon>0$ be a fixed constant.
Suppose that  the parameter $\mathrm{{\bf a}}$ in $A$ satisfies  the quadratic
 growth condition  
  \begin{equation}
 \begin{aligned}
\label{guanjian1-yuan}
\mathrm{{\bf a}}\geq \frac{2n-3}{\epsilon}\sum_{i=1}^{n-1}|a_i|^2 +(n-1)\sum_{i=1}^{n-1} |d_i|+ \frac{(n-2)\epsilon}{2n-3},
\end{aligned}
\end{equation}
where  $\epsilon$ is a positive constant.
 Then the eigenvalues 
   behavior like
\begin{equation}
\begin{aligned}
\,& |d_{\alpha}-\lambda_{\alpha}|
<   \epsilon, \mbox{ }\forall 1\leq \alpha\leq n-1,\\
\,&0\leq \lambda_{n}-\mathrm{{\bf a}}
< (n-1)\epsilon. \nonumber
\end{aligned}
\end{equation}
\end{lemma}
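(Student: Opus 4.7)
The plan is to exploit the arrowhead shape of $A$: every off-diagonal entry outside the last row and column vanishes. The first observation is that the growth condition \eqref{guanjian1-yuan} forces $\mathrm{{\bf a}}>d_i$ strictly for each $i$, since $\mathrm{{\bf a}}\geq (n-1)\sum_j|d_j|\geq |d_i|$ with the remaining terms providing slack. After a harmless relabeling I may assume $d_1\leq\cdots\leq d_{n-1}$ and order the eigenvalues $\lambda_1\leq\cdots\leq\lambda_n$ in parallel, so the target becomes $0\leq d_\alpha-\lambda_\alpha<\epsilon$ for $\alpha\leq n-1$ together with $0\leq \lambda_n-\mathrm{{\bf a}}<(n-1)\epsilon$.

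First I would combine two elementary ingredients. The Rayleigh quotient against $e_n$ gives $\lambda_n\geq \langle Ae_n,e_n\rangle=\mathrm{{\bf a}}$, while Cauchy's interlacing theorem applied to the principal submatrix $\mathrm{diag}(d_1,\ldots,d_{n-1})$ yields $\lambda_\alpha\leq d_\alpha$ for $\alpha\leq n-1$. Together with the trace identity $\sum_i\lambda_i=\sum_i d_i+\mathrm{{\bf a}}$, these force
$$\sum_{\alpha=1}^{n-1}(d_\alpha-\lambda_\alpha)=\lambda_n-\mathrm{{\bf a}}\geq 0,$$
with every summand on the left already non-negative. So it suffices to bound $\lambda_n-\mathrm{{\bf a}}$ from above: any such bound automatically controls every $d_\alpha-\lambda_\alpha$ as well.

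For that bound I would read off the eigenvalue equation $Av=\lambda_n v$ componentwise in a unit eigenvector $v=(v_1,\ldots,v_n)$. Since $\lambda_n\geq \mathrm{{\bf a}}>d_i$, the relations $(\lambda_n-d_i)v_i=a_i v_n$ force $v_n\neq 0$ (otherwise $v=0$) and, upon substituting $v_i=a_iv_n/(\lambda_n-d_i)$ back into the last component, produce the secular identity
$$\lambda_n-\mathrm{{\bf a}}=\sum_{i=1}^{n-1}\frac{|a_i|^2}{\lambda_n-d_i}\leq \sum_{i=1}^{n-1}\frac{|a_i|^2}{\mathrm{{\bf a}}-d_i}.$$
Hypothesis \eqref{guanjian1-yuan} gives $\mathrm{{\bf a}}-d_i\geq \mathrm{{\bf a}}-\sum_j|d_j|\geq \tfrac{2n-3}{\epsilon}\sum_j|a_j|^2$, which plugged into the display yields $\lambda_n-\mathrm{{\bf a}}\leq \tfrac{(n-1)\epsilon}{2n-3}$. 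Both conclusions of the lemma now follow: the inequality $\lambda_n-\mathrm{{\bf a}}<(n-1)\epsilon$ is immediate, and the trace identity forces each $d_\alpha-\lambda_\alpha\leq \tfrac{(n-1)\epsilon}{2n-3}<\epsilon$ for $n\geq 2$.

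The main obstacle I anticipate is the degenerate regime in which some $d_i$'s coincide or some $a_i$'s vanish, where the secular function develops pole collisions and the extraction $v_n\neq 0$ calls for justification. The cleanest remedy is a continuity argument: perturb the data $(d_1,\ldots,d_{n-1},a_1,\ldots,a_{n-1})$ into generic position (distinct $d_i$'s, nonzero $a_i$'s), derive the bound there, and pass to the limit since the bound is uniform in the perturbation. I should also emphasize that sharpness comes from the arrowhead structure: a direct application of Weyl's inequality would only yield $|\lambda_\alpha-d_\alpha|\leq (\sum_i|a_i|^2)^{1/2}$, which is useless precisely in the large-$\mathrm{{\bf a}}$ regime governed by \eqref{guanjian1-yuan}, whereas the secular relation converts the quadratic growth of $\mathrm{{\bf a}}$ against $\sum|a_i|^2$ into the sharp $\epsilon$-concentration stated in the lemma.
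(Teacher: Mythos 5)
Your proof is correct and takes a genuinely different, and considerably more streamlined, route than the paper's. The paper builds Lemma \ref{refinement2} through a cascade of auxiliary lemmas: a trace/contradiction argument (Lemma \ref{refinement}) showing each $\lambda_\alpha$ is within $\epsilon$ of \emph{some} $d_{i_\alpha}$, a continuity-of-cardinality argument (Lemma \ref{refinement111}) pinning down the correspondence under a distinctness assumption on the $d_i$'s, a dimension-reduction step (Lemma \ref{refinement3}) for coinciding $d_i$'s, and finally a clustering argument on the connected components $J_k$ to remove the restriction on $\epsilon$. You replace all of this with three elementary observations tailored to the arrowhead structure: Cauchy interlacing against the diagonal principal $(n-1)\times(n-1)$ block gives $\lambda_\alpha \leq d_\alpha$; the Rayleigh quotient at $e_n$ gives $\lambda_n \geq \mathrm{{\bf a}}$; and the trace identity then collapses all $n$ bounds into the single upper bound on $\lambda_n - \mathrm{{\bf a}}$ furnished by the secular identity. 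This buys a sharper quantitative conclusion than the lemma asks for, and in fact exposes the one-sided ordering $\lambda_\alpha \leq d_\alpha$, which is invisible in the paper's counting proof. Two small points to tighten. First, your arithmetic in the final substitution is off: plugging $\mathrm{{\bf a}}-d_i \geq \frac{2n-3}{\epsilon}\sum_j|a_j|^2$ into $\sum_i |a_i|^2/(\mathrm{{\bf a}}-d_i)$ makes the $\sum_j|a_j|^2$ cancel and yields $\lambda_n-\mathrm{{\bf a}} \leq \epsilon/(2n-3)$, not $(n-1)\epsilon/(2n-3)$; this matters because your stated bound gives only $\leq\epsilon$ when $n=2$, whereas the correct one is strictly below $\epsilon$ for all $n\geq 2$. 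Second, the strict inequalities of the lemma are most cleanly secured by observing that when $\sum_j|a_j|^2>0$ the secular identity forces $\lambda_n>\mathrm{{\bf a}}$ strictly (so $\lambda_n-d_i>\mathrm{{\bf a}}-d_i$ strictly), while $\sum_j|a_j|^2=0$ is the trivial diagonal case --- this also makes the perturbation step you flag for degenerate data unnecessary, since the secular argument only ever requires $\lambda_n>d_i$, which follows from $\lambda_n\geq\mathrm{{\bf a}}>d_i$ once some $a_j\neq 0$.
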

 This lemma is in fact a  quantitative version of the following lemma. 
\begin{lemma}
[\cite{CNS3}, Lemma 1.2]
\label{lemmaCNS3}
Consider the $n\times n$ symmetric matrix
\begin{equation}
\label{matrix1}
A=\left(
\begin{matrix}
d_1&&  &&a_{1}\\
&d_2&& &a_2\\
&&\ddots&&\vdots \\
&& &  d_{n-1}& a_{n-1}\\
a_1&a_2&\cdots& a_{n-1}& \mathrm{{\bf a}}              \nonumber
\end{matrix}
\right)
\end{equation}
with $d_1,\cdots, d_{n-1}$ fixed, $| \mathrm{{\bf a}} |$ tends to infinity and
\begin{equation}
|a_i|\leq C, i=1,\cdots, n. \nonumber
\end{equation}
Then the eigenvalues $\lambda_1,\cdots, \lambda_{n}$ behave like
\begin{equation}
\label{behave1}
\begin{aligned}
\,&
\lambda_{\alpha}=d_{\alpha}+o(1),\mbox{ } 
1\leq \alpha \leq n-1,\\
\,&
\lambda_{n}=\mathrm{{\bf a}}\left(1+O\left(1/\mathrm{{\bf a}}\right)\right),      \nonumber
\end{aligned}
\end{equation}
where the $o(1)$ and $O(1/\mathrm{{\bf a}})$ are uniform--depending only on $d_{1},\cdots, d_{n-1}$ and $C$.
\end{lemma}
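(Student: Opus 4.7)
The plan is to combine the Cauchy interlacing theorem with a one-sided Courant--Fischer min-max estimate tailored to the block structure of $A$.

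After a permutation of the first $n-1$ indices (which is a unitary conjugation preserving the spectrum), assume $d_1\le d_2\le\cdots\le d_{n-1}$ and order the eigenvalues $\lambda_1\le\lambda_2\le\cdots\le\lambda_n$. The principal $(n-1)\times(n-1)$ submatrix of $A$ obtained by deleting the last row and column is $\mathrm{diag}(d_1,\ldots,d_{n-1})$, so Cauchy's interlacing theorem yields
$$\lambda_\alpha\le d_\alpha\le\lambda_{\alpha+1},\quad 1\le\alpha\le n-1,$$
whence $d_\alpha-\lambda_\alpha\ge 0$ automatically. Testing the Rayleigh quotient against $e_n$ gives $\lambda_n\ge\langle Ae_n,e_n\rangle=\mathbf{a}$.

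The main step, where the quadratic growth hypothesis actually enters, is the strict upper bound $d_\alpha-\lambda_\alpha<\epsilon$. I would invoke the min-max characterisation
$$\lambda_\alpha=\max_{\dim V=n-\alpha+1}\min_{x\in V,\,\|x\|=1}\langle Ax,x\rangle$$
with the test subspace $V_0=\mathrm{span}(e_\alpha,e_{\alpha+1},\ldots,e_n)$ of dimension $n-\alpha+1$. For a unit vector $x=\sum_{i\ge\alpha}x_ie_i\in V_0$ a direct computation gives
$$\langle Ax,x\rangle=\sum_{i=\alpha}^{n-1}d_i|x_i|^2+\mathbf{a}|x_n|^2+2\,\mathrm{Re}\sum_{i=\alpha}^{n-1}a_ix_n\bar{x}_i.$$
Applying the Cauchy--Young inequality $2|a_i||x_n||x_i|\le\epsilon|x_i|^2+\epsilon^{-1}|a_i|^2|x_n|^2$ and using $d_i\ge d_\alpha$ for $i\ge\alpha$, one obtains
$$\langle Ax,x\rangle-(d_\alpha-\epsilon)\ge\Bigl(\mathbf{a}-d_\alpha+\epsilon-\epsilon^{-1}\sum_i|a_i|^2\Bigr)|x_n|^2.$$
Combining \eqref{guanjian1-yuan} with $d_\alpha\le\sum_i|d_i|$, the coefficient of $|x_n|^2$ is at least $\epsilon>0$; hence $\langle Ax,x\rangle>d_\alpha-\epsilon$ whenever $x_n\ne 0$, and for $x_n=0$ the cross terms vanish so that $\langle Ax,x\rangle=\sum d_i|x_i|^2\ge d_\alpha>d_\alpha-\epsilon$. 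Since the unit sphere in $V_0$ is compact and the inequality is strict pointwise, the attained minimum exceeds $d_\alpha-\epsilon$, so $\lambda_\alpha>d_\alpha-\epsilon$.

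The bound on $\lambda_n$ is then purely algebraic: the trace identity $\sum_k\lambda_k=\sum_i d_i+\mathbf{a}$ gives
$$\lambda_n-\mathbf{a}=\sum_{\alpha=1}^{n-1}(d_\alpha-\lambda_\alpha)\in[0,(n-1)\epsilon),$$
by the two preceding steps. The main obstacle lies in the middle step: one has to calibrate the Cauchy--Young parameter to be exactly $\epsilon$ and then verify, via the precise form of \eqref{guanjian1-yuan}, that the residual coefficient $\mathbf{a}-d_\alpha+\epsilon-\epsilon^{-1}\sum|a_i|^2$ retains a strictly positive lower bound. The inflated factor $(2n-3)/\epsilon$ and the additive term $(n-2)\epsilon/(2n-3)$ in \eqref{guanjian1-yuan} are tuned precisely to absorb this AM--GM loss uniformly in $n\ge 2$ and to preserve strict inequality in the degenerate cases.
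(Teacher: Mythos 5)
Your argument is correct, but you are not proving the qualitative CNS statement directly: since you invoke the hypothesis \eqref{guanjian1-yuan}, what you actually prove is the stronger, quantitative Lemma \ref{refinement2}, from which Lemma \ref{lemmaCNS3} follows by letting $\epsilon\to 0$ as $\mathrm{\bf a}\to+\infty$ (the case $\mathrm{\bf a}\to-\infty$ is then recovered by applying the result to $-A$). That said, your route is genuinely different from the one taken in the paper, and simpler. The paper proves Lemma \ref{refinement2} in three stages: Lemma \ref{refinement} extracts a rough localisation $|\lambda_\alpha - d_{i_\alpha}| < \epsilon$ for \emph{some} index $i_\alpha$ from the characteristic polynomial identity and the trace; Lemma \ref{refinement111} then upgrades this to the correct index matching under the extra hypotheses that the $d_i$ are distinct and $\epsilon$ is small, via a deformation argument (the eigenvalue-counting function $\mathrm{\bf Card}_\alpha(\mathrm{\bf a})$ is shown to be integer-valued, continuous, hence constant, and its large-$\mathrm{\bf a}$ limit is computed by the classical CNS asymptotics); and Lemma \ref{refinement3} peels off repeated diagonal entries to remove the distinctness restriction, with a final grouping argument to remove the smallness restriction on $\epsilon$. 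Your proof collapses all of this into two textbook facts. Cauchy interlacing against the diagonal $(n-1)\times(n-1)$ corner gives the exact one-sided control $\lambda_\alpha \le d_\alpha$ for free, with no case distinction on whether the $d_i$ coincide or on the size of $\epsilon$; Courant--Fischer with the test subspace $\mathrm{span}(e_\alpha,\dots,e_n)$ plus Cauchy--Young gives $\lambda_\alpha > d_\alpha - \epsilon$ in one line; the trace identity then finishes. This avoids the characteristic polynomial entirely, avoids the continuity-of-eigenvalues / counting machinery, and makes the uniformity of the constants transparent. One small observation worth recording: the interlacing bound $\lambda_\alpha \le d_\alpha$ is sharp and costs nothing, so your argument in fact only consumes a $1/\epsilon$ factor in front of $\sum|a_i|^2$ rather than the $(2n-3)/\epsilon$ appearing in \eqref{guanjian1-yuan}; the larger constant in the paper's hypothesis is there to survive the cruder interval-grouping step of Lemma \ref{refinement3}, and your method would give Lemma \ref{refinement2} under a weaker threshold.
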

 In their seminal paper, Caffarelli-Nirenberg-Spruck  \cite{CNS3}  proved
  Lemma \ref{lemmaCNS3}   and
applied it to derive boundary
 estimates for double normal derivative of
  certain fully non-linear elliptic equations on $\Omega\subset \mathbb{R}^n$.
 Since then Lemma \ref{lemmaCNS3}  plays an important role in the boundary estimates for second derivatives
 in the study of Dirichlet problem for general fully non-linear elliptic equations (cf.  \cite{LiYY1990,Trudinger95,Guan12a} and references therein).


Our main estimate in this paper is  the quantitative boundary estimate
which states that the complex Hessian on the boundary can be
bounded by a quadratic term of   the boundedness  for the gradient of unknown solutions.
Except complex Monge-Amp\`ere equation,
such  quantitative boundary estimates for  general fully non-linear elliptic equations
 are poorly understood 
 on this subject,
because Lemma \ref{lemmaCNS3}  does not figure out
how do  the eigenvalues   concentrate explicitly near the corresponding
 diagonal elements  of  the matrix $A$ when $|\mathrm{{\bf a}}|$ is sufficiently large.

We  make progress on the subject.
  Lemma \ref{refinement2} allows us to follow  the track  of the behavior of  the eigenvalues  as $|\mathrm{{\bf a}}|$ tends to infinity.

We start with the case of $n=2$. In this case,
we prove that  if $\mathrm{{\bf a}} \geq \frac{|a_1|^2}{ \epsilon}+ d_1$
 then
$$0\leq d_1- \lambda_1=\lambda_2-\mathrm{{\bf a}} <\epsilon.$$

Let's briefly  present the discussion as follows:
For $n=2$, the eigenvalues of $A$ are
 $$\lambda_{1}=\frac{\mathrm{{\bf a}}+d_1- \sqrt{(\mathrm{{\bf a}}-d_1)^2+4|a_1|^2}}{2} 
\mbox{ and } \lambda_2=\frac{\mathrm{{\bf a}}+d_1+\sqrt{(\mathrm{{\bf a}}-d_1)^2+4|a_1|^2}}{2}.$$ 
We can assume $a_1\neq 0$; otherwise we are done.
If $\mathrm{{\bf a}} \geq \frac{|a_1|^2}{ \epsilon}+ d_1$ then one has
\begin{equation}
\begin{aligned}
0\leq d_1- \lambda_1 =\lambda_2-\mathrm{{\bf a}}
= \frac{2|a_1|^2}{\sqrt{ (\mathrm{{\bf a}}-d_1)^2+4|a_1|^2 } +(\mathrm{{\bf a}}-d_1)}
< \frac{|a_1|^2}{\mathrm{{\bf a}}-d_1 } \leq \epsilon.   \nonumber
\end{aligned}
\end{equation}
Here we use $a_1\neq 0$ to verify that the strictly inequality in the above formula holds.
We hence obtain Lemma \ref{refinement2} for $n=2$.

The following lemma enables us  to count  the eigenvalues near the diagonal elements
via a deformation argument.
It is an essential  ingredient in the proof of  Lemma \ref{refinement2}  for general $n$.
\begin{lemma}
\label{refinement}
Let $A$ be a Hermitian $n$ by  $n$  matrix
\begin{equation}
\label{matrix2}
\left(
\begin{matrix}
d_1&&  &&a_{1}\\
&d_2&& &a_2\\
&&\ddots&&\vdots \\
&& &  d_{n-1}& a_{n-1}\\
\bar a_1&\bar a_2&\cdots& \bar a_{n-1}& \mathrm{{\bf a}} \nonumber
\end{matrix}
\right)
\end{equation}
with $d_1,\cdots, d_{n-1}, a_1,\cdots, a_{n-1}$ fixed, and with $\mathrm{{\bf a}}$ variable.
Denote
$\lambda=(\lambda_1,\cdots, \lambda_n)$ by the the eigenvalues of $A$ with the order
$\lambda_1\leq \lambda_2 \leq\cdots \leq \lambda_n$.
Fix   a positive constant $\epsilon$.
Suppose that the parameter $\mathrm{{\bf a}}$ in the matrix $A$ satisfies  the following quadratic growth condition
\begin{equation}
\label{guanjian2}
\begin{aligned}
\mathrm{{\bf a}} \geq \frac{1}{\epsilon}\sum_{i=1}^{n-1} |a_i|^2+\sum_{i=1}^{n-1}  [d_i+ (n-2) |d_i|]+ (n-2)\epsilon.
\end{aligned}
\end{equation}
Then for   any $\lambda_{\alpha}$ $(1\leq \alpha\leq n-1)$ there exists an  $d_{i_{\alpha}}$
with lower index $1\leq i_{\alpha}\leq n-1$ such that
\begin{equation}
\label{meishi}
\begin{aligned}
 |\lambda_{\alpha}-d_{i_{\alpha}}|<\epsilon,
\end{aligned}
\end{equation}
\begin{equation}
\label{mei-23-shi}
0\leq \lambda_{n}-\mathrm{{\bf a}} <(n-1)\epsilon + |\sum_{\alpha=1}^{n-1}(d_{\alpha}-d_{i_{\alpha}})|.
\end{equation}
\end{lemma}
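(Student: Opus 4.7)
The plan is to prove Lemma \ref{refinement} using only Rayleigh quotients, Cauchy's interlacing inequality, and the trace identity; no deformation argument is needed at this stage. Testing the min--max characterization against $e_n$ immediately gives $\lambda_n \geq e_n^* A e_n = \mathrm{{\bf a}}$, which establishes the lower bound in \eqref{mei-23-shi}. I would then sort the diagonal entries as $d_{(1)} \leq \cdots \leq d_{(n-1)}$ and choose $i_\alpha \in \{1,\ldots,n-1\}$ so that $d_{i_\alpha} = d_{(\alpha)}$; then $\alpha \mapsto i_\alpha$ is a permutation and $\sum_\alpha(d_\alpha - d_{i_\alpha}) = 0$, so both conclusions \eqref{meishi}--\eqref{mei-23-shi} will follow from the single strict bound $\lambda_n - \mathrm{{\bf a}} < \epsilon$: indeed, Cauchy interlacing with the leading $(n-1)\times(n-1)$ block $\mathrm{diag}(d_1,\ldots,d_{n-1})$ gives $\lambda_\alpha \leq d_{(\alpha)}$ for $\alpha \leq n-1$, which combined with the trace identity yields
$$
\lambda_n - \mathrm{{\bf a}} \;=\; \sum_{\alpha=1}^{n-1}(d_{(\alpha)} - \lambda_\alpha),
$$
a sum of non-negative terms, so $\lambda_n - \mathrm{{\bf a}} < \epsilon$ forces each summand $d_{(\alpha)} - \lambda_\alpha < \epsilon$ and a fortiori the second conclusion.

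To produce the bound $\lambda_n - \mathrm{{\bf a}} < \epsilon$, set $|a|^2 := \sum_{i<n}|a_i|^2$ and $c := \mathrm{{\bf a}} - \max_i d_i$. The crucial algebraic consequence of the hypothesis \eqref{guanjian2} is that $c \geq |a|^2/\epsilon + (n-2)\epsilon$; this in turn follows from the elementary case-by-case inequality $\sum_i[d_i + (n-2)|d_i|] \geq \max_i d_i$, which is immediate for $n \geq 3$ (each summand is non-negative, and the summand at the maximizer is $(n-1)\max_i d_i$ when $\max_i d_i \geq 0$) and an equality for $n = 2$. Now let $v$ be a unit eigenvector of $\lambda_n$ and set $t = 1 - |v_n|^2 = \sum_{i<n}|v_i|^2$; bounding the diagonal part by $-ct$ and the cross term via Cauchy--Schwarz yields
$$
\lambda_n - \mathrm{{\bf a}} \;=\; v^*(A - \mathrm{{\bf a}} I) v \;\leq\; -ct + 2|a|\sqrt{t(1-t)}.
$$
Elementary maximization of the right-hand side over $t \in [0,1]$, attained at $t_\star = \tfrac{1}{2}\bigl(1 - c/\sqrt{c^2+4|a|^2}\bigr)$, gives the sharp estimate
$$
\lambda_n - \mathrm{{\bf a}} \;\leq\; \tfrac{1}{2}\bigl(\sqrt{c^2 + 4|a|^2}-c\bigr) \;=\; \frac{2|a|^2}{\sqrt{c^2+4|a|^2}+c},
$$
and the latter is strictly less than $|a|^2/c \leq \epsilon$ whenever $|a|^2 > 0$, while $|a|^2 = 0$ reduces $A$ to a matrix with $\lambda_n = \mathrm{{\bf a}}$ outright.

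The main technical subtlety lies in the sharpness of the Rayleigh/Cauchy--Schwarz step: a naive AM--GM bound such as $2|a|\sqrt{t(1-t)} \leq |a|^2 t/c + c(1-t)$ cancels against the $-ct$ term and is too lossy (yielding only $\lambda_n - \mathrm{{\bf a}} \leq c$, which is useless here). One must either carry out the explicit maximization above, or, equivalently, observe that the extremal direction for $A - \mathrm{{\bf a}} I$ lies in the two-dimensional subspace spanned by $e_n$ and $\sum_{i<n}\bar a_i e_i$, which effectively reduces matters to the $n = 2$ case already worked out in the excerpt. Repeated $d_i$'s or vanishing $a_i$'s require no modification of the argument.
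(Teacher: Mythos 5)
Your proof is correct but takes a genuinely different route from the paper's. The paper works directly with the characteristic polynomial $(\lambda-\mathrm{{\bf a}})\prod_i(\lambda-d_i)=\sum_i|a_i|^2\prod_{j\neq i}(\lambda-d_j)$: it partitions $\{1,\dots,n-1\}$ into a ``good'' set (indices $\alpha$ with $|\lambda_\alpha-d_i|<\epsilon$ for some $i$) and a ``bad'' set (those with $|\lambda_\alpha-d_i|\geq\epsilon$ for all $i$), uses the polynomial identity to force bad eigenvalues to lie near $\mathrm{{\bf a}}$, and derives a trace contradiction if the bad set is nonempty. Because that argument only produces \emph{some} nearby $d_{i_\alpha}$ rather than a permutation, the paper needs the correction term $|\sum_\alpha(d_\alpha-d_{i_\alpha})|$ in \eqref{mei-23-shi}. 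Your argument instead combines Cauchy interlacing against the leading diagonal block, the trace identity, and an optimized Rayleigh-quotient bound, and it yields a strictly stronger statement: $\lambda_n-\mathrm{{\bf a}}<\epsilon$ (not merely $<(n-1)\epsilon$), a one-sided localization $\lambda_\alpha\in(d_{(\alpha)}-\epsilon,\,d_{(\alpha)}]$, and a genuine permutation $\alpha\mapsto i_\alpha$ so the correction term vanishes. I verified the computations: $\sum_i[d_i+(n-2)|d_i|]\geq\max_i d_i$ holds in every case (for $n\geq3$ each summand is nonnegative and the maximizer contributes $(n-1)\max_i d_i$ when $\max_i d_i\geq0$, while $\max_i d_i<0$ is even easier; $n=2$ is equality), so $c=\mathrm{{\bf a}}-\max_i d_i\geq|a|^2/\epsilon$; and the optimized bound $\tfrac12(\sqrt{c^2+4|a|^2}-c)=2|a|^2/(\sqrt{c^2+4|a|^2}+c)<|a|^2/c\leq\epsilon$ is correct whenever $|a|>0$, with $|a|=0$ trivial. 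One cosmetic caveat: your concluding ``equivalently'' remark is not literally true --- the top eigenvector of $A-\mathrm{{\bf a}} I$ does \emph{not} in general lie in $\mathrm{span}\{e_n,\sum_{i<n}\bar a_i e_i\}$ unless the $d_i$ are all equal --- but this does not affect the proof, since you only ever use the explicit maximization of the \emph{upper bound} $-ct+2|a|\sqrt{t(1-t)}$, which requires no sharpness claim about the original matrix.
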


\begin{proof}
Without loss of generality, we assume $\sum_{i=1}^{n-1} |a_i|^2>0$ and  $n\geq 3$
(otherwise we are done, since $A$ is diagonal or $n=2$).
Note that in the assumption of the lemma the eigenvalues have
the order $\lambda_1\leq \lambda_2\leq \cdots \leq \lambda_n$.
It is  well known that, for a Hermitian matrix,
 any diagonal element is   less than or equals to   the  largest eigenvalue.
 In particular,
 \begin{equation}
 \label{largest-eigen1}
 \lambda_n \geq \mathrm{{\bf a}}.
 \end{equation}

We only need to prove   \eqref {meishi}, since  \eqref{mei-23-shi} is a consequence of  \eqref{meishi}, \eqref{largest-eigen1}  and
\begin{equation}
\label{trace}
 \sum_{i=1}^{n}\lambda_i=\mbox{tr}(A)=\sum_{\alpha=1}^{n-1} d_{\alpha}+\mathrm{{\bf a}}.
 \end{equation}

 Let's denote   $I=\{1,2,\cdots, n-1\}$. We divide the index set   $I$ into two subsets  by
$${\bf B}=\{\alpha\in I: |\lambda_{\alpha}-d_{i}|\geq \epsilon, \mbox{   }\forall i\in I\} $$
and $ {\bf G}=I\setminus {\bf B}=\{\alpha\in I: \mbox{There exists an $i\in I$ such that } |\lambda_{\alpha}-d_{i}| <\epsilon\}.$

To complete the proof we need to prove ${\bf G}=I$ or equivalently ${\bf B}=\emptyset$.
  It is easy to see that  for any $\alpha\in {\bf G}$, one has
   \begin{equation}
   \label{yuan-lemma-proof1}
   \begin{aligned}
   |\lambda_\alpha|< \sum_{i=1}^{n-1}|d_i| + \epsilon.
   \end{aligned}
   \end{equation}

   Fix $ \alpha\in {\bf B}$,  we are going to give the estimate for $\lambda_\alpha$.
The eigenvalue $\lambda_\alpha$ satisfies
\begin{equation}
\label{characteristicpolynomial}
\begin{aligned}
(\lambda_{\alpha} -\mathrm{{\bf a}})\prod_{i=1}^{n-1} (\lambda_{\alpha}-d_i)
= \sum_{i=1}^{n-1} (|a_{i}|^2 \prod_{j\neq i} (\lambda_{\alpha}-d_{j})).
\end{aligned}
\end{equation}
By the definition of ${\bf B}$, for  $\alpha\in {\bf B}$, one then has $|\lambda_{\alpha}-d_i|\geq \epsilon$ for any $i\in I$.
We therefore derive
\begin{equation}
\begin{aligned}
|\lambda_{\alpha}-\mathrm{{\bf a}} |\leq \sum_{i=1}^{n-1} \frac{|a_i|^2}{|\lambda_{\alpha}-d_{i}|}\leq
\frac{1}{\epsilon}\sum_{i=1}^{n-1} |a_i|^2, \mbox{ if } \alpha\in {\bf B}.
\end{aligned}
\end{equation}
Hence,  for $\alpha\in {\bf B}$, we obtain
\begin{equation}
\label{yuan-lemma-proof2}
\begin{aligned}
 \lambda_\alpha \geq \mathrm{{\bf a}}-\frac{1}{\epsilon}\sum_{i=1}^{n-1} |a_i|^2.
\end{aligned}
\end{equation}

For a set ${\bf S}$, we denote $|{\bf S}|$ the  cardinality of ${\bf S}$.
We shall use proof by contradiction to prove  ${\bf B}=\emptyset$.
Assume ${\bf B}\neq \emptyset$.
Then $|{\bf B}|\geq 1$, and so $|{\bf G}|=n-1-|{\bf B}|\leq n-2$. 

We compute the trace of the matrix $A$ as follows:
\begin{equation}
\begin{aligned}
\mbox{tr}(A)=\,&
\lambda_n+
\sum_{\alpha\in {\bf B}}\lambda_{\alpha} + \sum_{\alpha\in  {\bf G}}\lambda_{\alpha}\\
> \,&
\lambda_n+
|{\bf B}| (\mathrm{{\bf a}}-\frac{1}{\epsilon}\sum_{i=1}^{n-1} |a_i|^2 )-|{\bf G}| (\sum_{i=1}^{n-1}|d_i|+\epsilon ) \\
\geq \,&
 2\mathrm{{\bf a}}-\frac{1}{\epsilon}\sum_{i=1}^{n-1} |a_i|^2 -(n-2) (\sum_{i=1}^{n-1}|d_i|+\epsilon )
\\
\geq \,& \sum_{i=1}^{n-1}d_i +\mathrm{{\bf a}}= \mbox{tr}(A),
\end{aligned}
\end{equation}
where we use  \eqref{guanjian2},   \eqref{largest-eigen1}, \eqref{yuan-lemma-proof1} and \eqref{yuan-lemma-proof2}.
This is a contradiction.


We now prove ${\bf B}=\emptyset$.
Therefore,   ${\bf G}=I$ and  the proof is complete.
\end{proof}

We consequently obtain

\begin{lemma}
\label{refinement111}
Let $A(\mathrm{{\bf a}})$ be an $n\times n$ Hermitian  matrix
\begin{equation}
A(\mathrm{{\bf a}})=\left(
\begin{matrix}
d_1&&  &&a_{1}\\
&d_2&& &a_2\\
&&\ddots&&\vdots \\
&& &  d_{n-1}& a_{n-1}\\
\bar a_1&\bar a_2&\cdots& \bar a_{n-1}& \mathrm{{\bf a}} \nonumber
\end{matrix}
\right)
\end{equation}
with
 $d_1,\cdots, d_{n-1}, a_1,\cdots, a_{n-1}$ fixed, and with $\mathrm{{\bf a}}$ variable.
 Assume that $d_1, d_2, \cdots, d_{n-1} $
 are distinct each other, i.e. $d_{i}\neq d_{j}, \forall i\neq j$.
Denote
$\lambda=(\lambda_1,\cdots, \lambda_n)$ by the the eigenvalues of $A(\mathrm{{\bf a}})$.
Given a positive constant $\epsilon$ with
$0<\epsilon \leq \frac{1}{2}\min\{|d_i-d_j|: \forall i\neq j\}.$
If the parameter  $\mathrm{{\bf a}}$ satisfies the quadratic growth condition 
\begin{equation}
 \begin{aligned}
\label{guanjian1}
\mathrm{{\bf a}}\geq \frac{1}{\epsilon}\sum_{i=1}^{n-1}|a_i|^2 +(n-1)\sum_{i=1}^{n-1} |d_i|+(n-2)\epsilon,
\end{aligned}
\end{equation}
then the eigenvalues behavior like
\begin{equation}
\begin{aligned}
 \,& |d_{\alpha}-\lambda_{\alpha}|
<   \epsilon, \forall 1\leq \alpha\leq n-1,\\
\,& 0\leq \lambda_{n}-\mathrm{{\bf a}}
<  (n-1)\epsilon.  \nonumber
\end{aligned}
\end{equation}
\end{lemma}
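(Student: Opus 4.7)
The plan is to promote the qualitative clustering provided by Lemma \ref{refinement} into the sharp index-matched form by a continuity argument that exploits the distinctness of the $d_i$. First, I would observe that hypothesis \eqref{guanjian1} implies \eqref{guanjian2}, since $(n-1)|d_i| \geq d_i + (n-2)|d_i|$; hence Lemma \ref{refinement} applies and, for each $\alpha\in\{1,\ldots,n-1\}$, produces an index $i_\alpha$ with $|\lambda_\alpha - d_{i_\alpha}|<\epsilon$. The assumption $\epsilon\leq \tfrac{1}{2}\min_{i\neq j}|d_i-d_j|$ makes the open intervals $U_i := (d_i-\epsilon, d_i+\epsilon)$ pairwise disjoint, so each of the first $n-1$ eigenvalues belongs to exactly one $U_i$. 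It therefore suffices to show that, after suitably labeling the eigenvalues, the $\alpha$-th one lies in $U_\alpha$.

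For this I would set up the continuous deformation $A_t$ ($t\in[0,1]$) obtained from $A(\mathrm{{\bf a}})$ by scaling $(a_1,\ldots,a_{n-1})$ to $(ta_1,\ldots,ta_{n-1})$, while keeping $d_1,\ldots,d_{n-1}$ and $\mathrm{{\bf a}}$ fixed. Since $\sum|ta_i|^2\leq \sum|a_i|^2$, every $A_t$ still satisfies \eqref{guanjian1}, so Lemma \ref{refinement} applies uniformly in $t$ and places each of the $n-1$ smaller eigenvalues $\lambda_\alpha(t)$, chosen as continuous branches (possible since $A_t$ is a real-analytic Hermitian family, by Rellich's theorem), strictly inside $\bigcup_i U_i$. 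Because the $U_i$ are pairwise disjoint open sets and no branch ever reaches a boundary point (a consequence of the strict inequality in Lemma \ref{refinement}), the integer-valued count of branches lying inside each $U_i$ is locally, hence globally, constant in $t$. At $t=0$ the matrix $A_0$ is diagonal with spectrum $\{d_1,\ldots,d_{n-1},\mathrm{{\bf a}}\}$, so choosing labels such that $\lambda_\alpha(0)=d_\alpha$ for $\alpha\leq n-1$ and $\lambda_n(0)=\mathrm{{\bf a}}$, each $U_i$ contains exactly one branch at $t=0$ and therefore at every $t\in[0,1]$; in particular $\lambda_\alpha(1)\in U_\alpha$, which is the first estimate.

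The bound on $\lambda_n$ then follows from the trace identity $\lambda_n - \mathrm{{\bf a}} = \sum_{\alpha=1}^{n-1}(d_\alpha-\lambda_\alpha)$, whose right-hand side is bounded in absolute value by $(n-1)\epsilon$, combined with $\lambda_n \geq \mathrm{{\bf a}}$ (the largest eigenvalue of a Hermitian matrix dominates any diagonal entry). The technical point I expect to require most care is the assertion that no eigenvalue branch $\lambda_\alpha(t)$ migrates between two components $U_i$ and $U_j$ as $t$ varies; this rests entirely on the strict inequality furnished by Lemma \ref{refinement} at each intermediate $t$, which together with the pairwise disjointness of the $U_i$ precludes any boundary crossing.
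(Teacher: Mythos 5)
Your proof is correct and reaches the same conclusion via a genuinely different deformation. The paper also deduces the sharp pairing by a continuity-of-counting argument anchored on Lemma \ref{refinement}, but it deforms the corner parameter $\mathrm{{\bf a}}$ over the ray $[P_0,+\infty)$ and uses the Caffarelli--Nirenberg--Spruck asymptotic lemma (Lemma \ref{lemmaCNS3}) to verify that each interval $I_\alpha$ captures at least one eigenvalue in the limit $\mathrm{{\bf a}}\to+\infty$; constancy of the count then transports the conclusion back to $\mathrm{{\bf a}}=P_0$. You instead shrink the off-diagonal entries $a_i\mapsto ta_i$, keeping $\mathrm{{\bf a}}$ and the $d_i$ fixed, and anchor the count at the trivially computable diagonal endpoint $t=0$; since $t\mapsto\sum|ta_i|^2$ is nonincreasing, hypothesis \eqref{guanjian1} persists along the whole path, Lemma \ref{refinement} confines the branches to $\bigcup_i U_i$ at every $t$, and the largest eigenvalue is excluded from $\overline{\bigcup_i U_i}$ because $\lambda_n(t)\ge\mathrm{{\bf a}}>\max_i d_i+\epsilon$. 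Your variant is more self-contained: it never invokes Lemma \ref{lemmaCNS3}, the deformation is compact rather than unbounded, and the anchor spectrum is exact rather than asymptotic. Two small cosmetic points: invoking Rellich's theorem is heavier than needed --- continuity of the \emph{sorted} eigenvalues of a Hermitian family already suffices for the intermediate-value/counting argument --- and the labeling $\lambda_\alpha(0)=d_\alpha$ tacitly uses the normalization $d_1<\cdots<d_{n-1}$, which you should state explicitly as a without-loss-of-generality reduction (as the paper does).
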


\begin{proof}
The proof is based on Lemma  \ref{refinement} and a deformation argument.
Without loss of generality,  we assume $n\geq 3$ and  $\sum_{i=1}^{n-1} |a_i|^2>0$
 (otherwise  $n=2$ or the matrix $A(\mathrm{{\bf a}})$ is diagonal, and then we are done).
 Moreover, we assume in addition that $d_1<d_2\cdots<d_{n-1}$ and the eigenvalues have the order
 $$\lambda_1\leq \lambda_2\cdots \leq \lambda_{n-1}\leq \lambda_n.$$

  Fix $\epsilon\in (0,\mu_0]$, where $\mu_0=\frac{1}{2}\min\{|d_i-d_j|: \forall i\neq j\}$.
  We denote
   $$I_{i}=(d_i-\epsilon,d_i+\epsilon)$$
     and $$P_0=\frac{1}{\epsilon}\sum_{i=1}^{n-1} |a_i|^2+ (n-1)\sum_{i=1}^{n-1} |d_i|+ (n-2)\epsilon.$$
    Since $0<\epsilon \leq \mu_0$,  the intervals disjoint each other
  \begin{equation}
\label{daqin122}
I_\alpha\bigcap I_\beta=\emptyset \mbox{ for }  1\leq \alpha<\beta\leq n-1.
\end{equation}

In what follows,
 we assume that  the parameter  $\mathrm{{\bf a}}$ satisfies \eqref{guanjian1} and  the Greek letters $\alpha,  \beta$
  range from $1$ to $n-1$.
Let
$$\mathrm{{\bf Card}}_\alpha: [P_0,+\infty)\rightarrow \mathbb{N}$$
be the function that counts   the eigenvalues which lie in $I_\alpha$.
 (Note that when the eigenvalues are not distinct,  the function $\mathrm{{\bf Card}}_\alpha$ means the summation of all the 
 multiplicities of  distinct eigenvalues
which  lie in $I_\alpha$).
  This function measures the number of the  eigenvalues which lie in $I_\alpha$.

 We are going to prove that $\mathrm{{\bf Card}}_\alpha$ is continuous on $[P_0,+\infty)$ in an attempt to
 complete the proof.

 Firstly, Lemma \ref{refinement} asserts that if $\mathrm{{\bf a}} \geq P_0$, then
 \begin{equation}
\label{daqin111}
\begin{aligned}
\lambda_{\alpha}\in  \bigcup_{i=1}^{n-1} I_i, \mbox{   } \forall 1\leq \alpha\leq n-1.
\end{aligned}
\end{equation}

 It is well known that the largest eigenvalue $\lambda_n\geq \mathrm{{\bf a}}$,
 while the smallest eigenvalue $\lambda_1 \leq d_1$.
 Combining it with    \eqref{daqin111}
one has
 \begin{equation}
 \label{largest1}
\begin{aligned}
\lambda_{n} \geq  \mathrm{{\bf a}}
>\,& \sum_{i=1}^{n-1}|d_i| +\epsilon,
\end{aligned}
\end{equation}
Thus $\lambda_n\in \mathbb{R}\setminus (\bigcup_{i=1}^{n-1} \overline{I_i})$
where $\overline{I_i}$ denotes the closure of $I_i$.
Therefore, 
the function $\mathrm{{\bf Card}}_\alpha$
is continuous (and so it is constant), since  
 \eqref{daqin111}, \eqref{daqin122},  $\lambda_n\in \mathbb{R}\setminus (\overline{\bigcup_{i=1}^{n-1} I_i})$
and  the eigenvalues of $A(\mathrm{{\bf a}})$   depend  on the parameter  $\mathrm{{\bf a}}$ continuously.

The  continuity   of $\mathrm{{\bf Card}}_\alpha(\mathrm{{\bf a}})$ plays a crucial role in this proof.
Following the line of the proof Lemma 1.2  of Caffarelli-Nirenberg-Spruck \cite{CNS3} (i.e. Lemma \ref{lemmaCNS3} above),
in the setting of Hermitian matrices,  one can show   that  for $1\leq \alpha\leq n-1$,
\begin{equation}
\label{yuanrr-lemma-12321}
\lim_{\mathrm{{\bf a}}\rightarrow +\infty} \mathrm{{\bf Card}}_{\alpha}(\mathrm{{\bf a}}) \geq 1.
\end{equation}
It follows from \eqref{largest1},  \eqref{yuanrr-lemma-12321}
 and the  continuity  of $\mathrm{{\bf Card}}_\alpha $ that
  \begin{equation}
\label{daqin142224}
\mathrm{{\bf Card}}_{\alpha}(\mathrm{{\bf a}})= 1,
 \mbox{   } \forall \mathrm{{\bf a}}\in [P_0, +\infty),  \mbox{   } 1\leq \alpha\leq n-1.  \nonumber
\end{equation}
Together with \eqref{daqin111}, we prove that,
for any   $1\leq \alpha\leq n-1$, the interval $I_\alpha=(d_{\alpha}-\epsilon,d_{\alpha}+\epsilon)$
contains the eigenvalue $\lambda_\alpha$.
We thus complete  the proof of Lemma \ref{refinement111}.
\end{proof}




 Suppose that there are two distinct index $i_0, j_0$ ($i_0\neq j_0$) such that $d_{i_{0}}= d_{j_{0}}$.
  Then the characteristic polynomial of $A$
can be rewritten as the following
\begin{equation}
\begin{aligned}
(\lambda-d_{i_{0}})\left[(\lambda-\mathrm{{\bf a}})\prod_{i \neq i_{0}} (\lambda-d_i)-|a_{i_{0}}|^{2}\prod_{j\neq j_0, j\neq i_0} (\lambda-d_j)
-\sum_{i\neq i_0}|a_i|^2\prod_{j\neq i, j\neq i_{0}} (\lambda-d_j)\right]. \nonumber
\end{aligned}
\end{equation}
So $\lambda_{i_0}=d_{i_{0}}$ is an eigenvalue of $A$ for any $a\in \mathbb{R}$.
Noticing that the following polynomial
$$(\lambda-\mathrm{{\bf a}})\prod_{i \neq i_{0}} (\lambda-d_i)
-|a_{i_{0}}|^{2}\prod_{j\neq j_0, j\neq i_0} (\lambda-d_j)
-\sum_{i\neq i_0}|a_i|^2\prod_{j\neq i, j\neq i_{0}} (\lambda-d_j)$$
 is the characteristic polynomial of the $(n-1)\times (n-1)$ Hermitian  matrix
\begin{equation}
\label{matrix1}
\left(
\begin{matrix}
d_1&&  &&&&a_{1}\\
&\ddots && &&&\vdots \\
&&\widehat{d_{i_{0}}}&& &&\widehat{a_{i_{0}}}\\
&&&\ddots&&&\vdots \\
&&& &  d_{j_{0}}&& (|a_{j_{0}}|^{2}+|a_{i_{0}}|^{2})^{\frac{1}{2}}\\
&&&& &   \ddots              \\
\bar a_1&\cdots&\widehat{ \bar a_{i_{0}}}&\cdots& (|a_{j_{0}}|^{2}+|a_{i_{0}}|^{2})^{\frac{1}{2}} &\cdots & \mathrm{{\bf a}}   \nonumber
\end{matrix}
\right) 
\end{equation}
where $\widehat{*}$ indicates deletion.
Therefore, $(\lambda_1, \cdots, \widehat{\lambda_{i_{0}}}, \cdots, \lambda_{n})$ are the eigenvalues of the above $(n-1)\times (n-1)$
Hermitian  matrix.
Hence,  we obtain 
\begin{lemma}
\label{refinement3}
Let $A$ be an $n\times n$ Hermitian matrix
\begin{equation}
\label{matrix2}
\left(
\begin{matrix}
d_1&&  &&a_{1}\\
&d_2&& &a_2\\
&&\ddots&&\vdots \\
&& &  d_{n-1}& a_{n-1}\\
\bar a_1&\bar a_2&\cdots& \bar a_{n-1}& \mathrm{{\bf a}} \nonumber
\end{matrix}
\right)
\end{equation}
with $d_1,\cdots, d_{n-1}, a_1,\cdots, a_{n-1}$ fixed, and with $\mathrm{{\bf a}}$ variable.  Let  
\begin{equation}
\mathcal{I}=
\begin{cases}
 \mathbb{R}^{+}=(0,+\infty) \,& \mbox{ if } d_{i}=d_{1}, \forall 2\leq i \leq n-1,\\
 \left(0,\mu_0\right)  \mbox{ for } \mu_0=\frac{1}{2}\min\{|d_{i}-d_{j}|: d_{i}\neq d_{j}\} \,& \mbox{ otherwise.}\nonumber
\end{cases}
\end{equation}
Denote
$\lambda=(\lambda_1,\cdots, \lambda_n)$ by the the eigenvalues of $A$. Fix  $\epsilon\in \mathcal{I}$.
Suppose that  the parameter $\mathrm{{\bf a}}$ in $A$ satisfies  \eqref{guanjian1}.
 Then the eigenvalues behavior like
\begin{equation}
\begin{aligned}
\,& |d_{\alpha}-\lambda_{\alpha}|
< \epsilon, \mbox{    } \forall 1\leq \alpha\leq n-1,\\
\,&0 \leq \lambda_{n}-\mathrm{{\bf a}}
< (n-1)\epsilon. \nonumber
\end{aligned}
\end{equation}
\end{lemma}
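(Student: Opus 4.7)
The strategy is to reduce Lemma \ref{refinement3} to Lemma \ref{refinement111} by exploiting the factorization of the characteristic polynomial displayed immediately above the statement. Whenever two diagonal entries coincide, $d_{i_0} = d_{j_0}$ with $i_0 \neq j_0$, the characteristic polynomial of $A$ factors out $(\lambda - d_{i_0})$, and the remaining factor is the characteristic polynomial of an $(n-1)\times(n-1)$ Hermitian matrix of exactly the same structure, obtained by deleting the $i_0$-th row and column and by replacing $a_{j_0}$ with $\sqrt{|a_{i_0}|^2 + |a_{j_0}|^2}$. Iterating this reduction collapses all repeated diagonal entries at once.

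More precisely, the plan is to group the indices $\{1,\ldots,n-1\}$ according to the value of $d_i$. Say there are $k$ distinct values $\tilde d_1,\ldots,\tilde d_k$ among $d_1,\ldots,d_{n-1}$, with multiplicities $m_1,\ldots,m_k$ satisfying $\sum_\ell m_\ell = n-1$. The spectrum of $A$ then splits as the multiset union of $m_\ell - 1$ copies of $\tilde d_\ell$ for each $\ell$, together with the $k+1$ eigenvalues of the collapsed $(k+1)\times(k+1)$ Hermitian matrix $\tilde A$ with diagonal $(\tilde d_1,\ldots,\tilde d_k,\mathrm{{\bf a}})$ and last column off-diagonal entries $\tilde a_\ell := \bigl(\sum_{i:\,d_i = \tilde d_\ell} |a_i|^2\bigr)^{1/2}$. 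By construction the $\tilde d_\ell$ are pairwise distinct.

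Now Lemma \ref{refinement111} can be applied to $\tilde A$. The arithmetic is favorable: $\sum_\ell \tilde a_\ell^2 = \sum_i |a_i|^2$ is preserved exactly, while $\sum_\ell |\tilde d_\ell| \leq \sum_i |d_i|$ and $k \leq n-1$. Hence the hypothesis \eqref{guanjian1} on $\mathrm{{\bf a}}$ for $A$ is at least as strong as the analogous hypothesis required by Lemma \ref{refinement111} for $\tilde A$ (where $n$ is replaced by $k+1$). The restriction $\epsilon \in \mathcal I$ in Lemma \ref{refinement3} matches exactly the separation condition $\epsilon \leq \frac{1}{2}\min_{\ell \neq m}|\tilde d_\ell - \tilde d_m|$ needed to invoke Lemma \ref{refinement111}; in the degenerate case $k=1$, every $\epsilon > 0$ works, corresponding to $\mathcal I = \mathbb{R}^+$, and the reduction lands on a $2\times 2$ matrix whose eigenvalues were analyzed in closed form earlier in this section. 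Lemma \ref{refinement111} then yields, for $\tilde A$, one eigenvalue in each interval $(\tilde d_\ell - \epsilon, \tilde d_\ell + \epsilon)$ and one eigenvalue in $[\mathrm{{\bf a}}, \mathrm{{\bf a}} + k\epsilon)$.

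Assembling the two pieces, for each $\ell$ we obtain exactly $m_\ell$ eigenvalues lying in $(\tilde d_\ell - \epsilon, \tilde d_\ell + \epsilon)$, which covers all $n-1$ small eigenvalues; the remaining eigenvalue satisfies $\mathrm{{\bf a}} \leq \lambda_n < \mathrm{{\bf a}} + k\epsilon \leq \mathrm{{\bf a}} + (n-1)\epsilon$. The main obstacle is not analytic but notational: one must relabel the $n-1$ small eigenvalues by $\alpha = 1,\ldots,n-1$ so that $|\lambda_\alpha - d_\alpha| < \epsilon$ holds with the original indexing of the $d_i$. This is purely combinatorial — for each $\ell$, the $m_\ell$ small eigenvalues clustered around $\tilde d_\ell$ are distributed among the $m_\ell$ original indices $i$ with $d_i = \tilde d_\ell$ — and it completes the proof.
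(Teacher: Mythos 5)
Your proof is correct and follows essentially the same route as the paper: the paper states Lemma~\ref{refinement3} as an immediate consequence of the factorization of the characteristic polynomial for repeated diagonal entries combined with Lemma~\ref{refinement111}, and you have filled in the iteration, the arithmetic check that condition~\eqref{guanjian1} is preserved under collapsing (using $\sum_\ell \tilde a_\ell^2 = \sum_i |a_i|^2$, $\sum_\ell |\tilde d_\ell| \le \sum_i |d_i|$, $k \le n-1$), and the relabeling step that the paper leaves implicit.
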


Applying Lemmas \ref{refinement} and \ref{refinement3},
we  complete the proof of Lemma \ref{refinement2}  without restriction to the applicable
 scope of $\epsilon$.

\begin{proof}
[Proof of Lemma \ref{refinement2}]
We follow the outline of the proof of Lemma \ref{refinement111}.
Without loss of generality, we may assume
 $$n\geq 3, \mbox{ } \sum_{i=1}^{n-1} |a_i|^2>0,  \mbox{ }  d_1\leq d_2\leq \cdots \leq d_{n-1}  \mbox{ and } \lambda_1\leq \lambda_2 \leq \cdots \lambda_{n-1}\leq \lambda_n.$$

Fix $\epsilon>0$.  Let $I'_\alpha=(d_\alpha-\frac{\epsilon}{2n-3}, d_\alpha+\frac{\epsilon}{2n-3})$ and
$$P_0'=\frac{2n-3}{\epsilon}\sum_{i=1}^{n-1} |a_i|^2+ (n-1)\sum_{i=1}^{n-1} |d_i|+ \frac{(n-2)\epsilon}{2n-3}.$$
  In what follows we assume \eqref{guanjian1-yuan} holds.
The connected components of $\bigcup_{\alpha=1}^{n-1} I_{\alpha}'$ are as in the following:
$$J_{1}=\bigcup_{\alpha=1}^{j_1} I_\alpha', \mbox{ }
J_2=\bigcup_{\alpha=j_1+1}^{j_2} I_\alpha', \mbox{ }  \cdots, J_i =\bigcup_{\alpha=j_{i-1}+1}^{j_i} I_\alpha', \mbox{ } \cdots, \mbox{ }
 J_{m} =\bigcup_{\alpha=j_{m-1}+1}^{n-1} I_\alpha'.$$
 Moreover 
   \begin{equation}
   \begin{aligned}
J_i\bigcap J_k=\emptyset, \mbox{ for }   1\leq i<k\leq m. \nonumber
\end{aligned}
\end{equation}
It  plays formally the role of \eqref{daqin122} in the proof of Lemma \ref{refinement111}.

 As in the proof of Lemma \ref{refinement111},
 we let
  $$ \mathrm{{\bf \widetilde{Card}}}_k:[P_0',+\infty)\rightarrow \mathbb{N}$$
be the function that counts the eigenvalues which lie in $J_k$.
   (Note that when the eigenvalues are not distinct,  the function $\mathrm{{\bf \widetilde{Card}}}_k$ denotes  the summation of all the 
    multiplicities of  distinct eigenvalues which
 lie in $J_k$).
By using  Lemma \ref{refinement} and  $$\lambda_n \geq {\bf a}\geq P_0'>\sum_{i=1}^{n-1}|d_i|+\frac{\epsilon}{2n-3}$$ we conclude that
 if the parameter $\mathrm{{\bf a}}$ satisfies the quadratic growth condition \eqref{guanjian1-yuan} then
   \begin{equation}
  \label{yuan-lemma-proof5}
  \begin{aligned}
   \,& \lambda_n \in \mathbb{R}\setminus (\bigcup_{k=1}^{n-1} \overline{I_k'})
   =\mathbb{R}\setminus (\bigcup_{i=1}^m \overline{J_i}), \\
  \,& \lambda_\alpha \in \bigcup_{i=1}^{n-1} I_{i}'=\bigcup_{i=1}^m J_{i} \mbox{ for } 1\leq\alpha\leq n-1.
  \end{aligned}
  \end{equation}
Similarly,  $ \mathrm{{\bf \widetilde{Card}}}_i(\mathrm{{\bf a}})$ is a continuous function
 with respect to the variable $\mathrm{{\bf a}}$. So it is a constant.
 Combining it with Lemma \ref{refinement3}  we see
 that $ \mathrm{{\bf \widetilde{Card}}}_i(\mathrm{{\bf a}}) =j_i-j_{i-1}$ for sufficiently large $\mathrm{{\bf a}}$.
 Here we denote $j_0=0$ and $j_m=n-1$.
The constant of $ \mathrm{{\bf \widetilde{Card}}}_i$  therefore follows that
$$ \mathrm{{\bf \widetilde{Card}}}_i(\mathrm{{\bf a}})
=j_i-j_{i-1}.$$
We thus know that the   $(j_i-j_{i-1})$ eigenvalues
$$\lambda_{j_{i-1}+1}, \lambda_{j_{i-1}+2}, \cdots, \lambda_{j_i}$$
lie in the connected component $J_{i}$.
Thus, for any $j_{i-1}+1\leq \gamma \leq j_i$,  we have $I_\gamma'\subset J_i$ and  $\lambda_\gamma$
   lies in the connected component $J_{i}$.
Therefore,
$$|\lambda_\gamma-d_\gamma| < \frac{(2(j_i-j_{i-1})-1) \epsilon}{2n-3}\leq \epsilon.$$
Here we also use the fact that $d_\gamma$ is midpoint of  $I_\gamma'$ and every $J_i\subset \mathbb{R}$ is an open subset.
\end{proof}

\section{Quantitative Boundary Estimates}
\label{preciseboundaryestimates}


Given a point $p_0\in \partial  M$. 
 Let $\sigma$ be the distance function to $\partial  M$,  and let $\rho(z)=\mathrm{dist}_{\bar M}(z,p_0)$ be the distance function from $z$ to $p_0$ with respect to $\omega$, and
\begin{equation}
 \Omega_{\delta}=\{z\in M: \rho(z)<\delta\}. \nonumber
\end{equation}

First of all, we derive $C^0$-estimate,   gradient  estimate on
the boundary and the boundary estimates for double tangential derivatives.
Let  $w\in C^2(\bar M)$ be the function solving
\begin{equation}
\label{supersolution}
\begin{aligned}
\Delta w+ g^{i\bar j}\chi_{i\bar j}  = 0   \mbox{ in } M,  \mbox{  }
w  =\varphi    \mbox{ on } \partial  M.
\end{aligned}
\end{equation}
The maximum principle and comparison principle imply that
\begin{equation}
\label{daqindiguo1}
\begin{aligned}
  \underline{u}\leq u  \leq w   \mbox{ in } M.
\end{aligned}
\end{equation}
Combining with   $ \underline{u} =u = w=\varphi  \mbox{ on } \partial   M$, we know
  that  there is a uniform constant $C$ depending only on $\sup_{\bar M} |w|$,
   $\sup_{\bar M}|\underline{u}|$, $\sup_{\partial M} |\nabla w|$ and $\sup_{\partial M} |\nabla  \underline{u}|$ such that
\begin{equation}
\label{c0-bdr-c1}
\begin{aligned}
\sup_{\bar M}|u| + \sup_{\partial M}|\nabla u |  \leq C.
\end{aligned}
\end{equation}
Moreover, we can write $u-\underline{u}=h\sigma \mbox{ in } \{z\in   M: \sigma< \delta\} \mbox{ for } 0<\delta\ll 1$
and  $h \in C^2$. Thus, we have
\begin{equation}
\label{puretangential}
\begin{aligned}
\sup_{\partial M}|\nabla^2 u(\xi_1,\xi_2)  |\leq C, \mbox{ } \forall  \xi_1, \xi_2\in T_{\partial  M}, |\xi_1|=|\xi_2|=1
\end{aligned}
\end{equation}
where $\nabla^2 u$ denotes the real Hessian of $u$.


 Applying Lemma \ref{refinement2} we first show that the  boundary estimates for double normal  derivatives
 can be  dominated  by a quadratic term of   the boundary estimates for tangential-normal derivatives.
 \begin{proposition}
 	\label{proposition-quar-yuan1}
 	Let $(M, J, \omega)$ be a compact Hermitian manifold with $C^2$ Levi flat  boundary,  let $\nu$ be the unit inner normal vector along the boundary. 
 	We denote $\xi_n=\frac{1}{2}(\nu-\sqrt{-1}J\nu)$.
 	Let $u\in C^3(M)\cap C^{2}(\bar M)$ be an admissible solution  to Dirichlet problem \eqref{mainequ1}. 
 	Fix a point $p_0$ at the boundary.
 	Then for   $\xi_\alpha,\xi_\beta \in T_{\partial M }\cap JT_{\partial M }$ $(\alpha,\beta=1,\cdots, n-1)$  satisfying 
 	$J\xi_\alpha=\sqrt{-1}\xi_\alpha, J\xi_\beta=\sqrt{-1}\xi_\beta$   and
 	$g(\xi_\alpha, \bar\xi_{\beta}) =\delta_{\alpha\beta}$  at $p_0\in \partial M$,
 	we have
 	\begin{equation}
 		\label{yuan-prop1}
 		\begin{aligned}
 			\mathfrak{g}(\xi_n, J\bar \xi_n)(p_0)  \leq C(1 +  \sum_{\alpha=1}^{n-1} |\mathfrak{g}(\xi_\alpha, J\bar \xi_n)(p_0)|^2),  \nonumber
 		\end{aligned}
 	\end{equation}
 	where $C$ is a uniform positive constant  depending  only on   $|u|_{C^0(\bar M)}$,   $|\underline{u}|_{C^{2}(\bar M)}$
 	and other known data
 	(but neither on $\sup_{M}|\nabla u|$ nor on $(\delta_{\psi,f})^{-1}$).
 \end{proposition}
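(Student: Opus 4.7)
The strategy is to argue by contradiction: assuming the claimed inequality fails, Lemma \ref{refinement2} will force the eigenvalues of $\mathfrak{g}[u](p_0)$ into a rigid configuration which is incompatible with the equation $F(\mathfrak{g}[u])=\psi$, the admissible subsolution $\underline{u}$, and the structural hypotheses on $f$. First, choose holomorphic coordinates $z=(z_1,\dots,z_n)$ near $p_0$ adapted to the Levi flat boundary so that $\partial M=\{\mathrm{Im}\, z_n=0\}$ locally, $p_0$ is the origin, $g_{i\bar j}(p_0)=\delta_{ij}$, and $\partial/\partial z_1,\dots,\partial/\partial z_{n-1}$ span $T^{1,0}\partial M$ at $p_0$. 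A unitary change of the first $n-1$ coordinates (which fixes $\xi_n$ and hence $\mathbf{a}$) diagonalises the upper $(n-1)\times(n-1)$ block of $(\mathfrak{g}_{i\bar j}(p_0))$, putting this Hermitian matrix into the exact form of Lemma \ref{refinement2} with entries $d_\alpha=\mathfrak{g}_{\alpha\bar\alpha}(p_0)$, $a_\alpha=\mathfrak{g}_{\alpha\bar n}(p_0)$ and $\mathbf{a}=\mathfrak{g}_{n\bar n}(p_0)$. Because $u|_{\partial M}=\varphi$ and the coordinate tangent vectors $\partial/\partial z_\alpha$ lie in $T\partial M$ for $\alpha\le n-1$, the pure-tangential estimate \eqref{puretangential} gives $|d_\alpha|\le C_0$ with $C_0$ depending only on $|\varphi|_{C^2}$, $|\chi|_{C^0}$ and $|\underline{u}|_{C^2}$.

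Suppose now the proposition fails, so that there are admissible solutions and boundary points $p_0$ making the ratio $\mathbf{a}/(1+\sum_\alpha|a_\alpha|^2)$ arbitrarily large. Fix a small $\epsilon>0$ determined by $C_0$ and the structural data for $f$ and $\underline{u}$. For such data the quadratic growth condition \eqref{guanjian1-yuan} is satisfied, so Lemma \ref{refinement2} yields $|\lambda_\alpha-d_\alpha|<\epsilon$ for $1\le\alpha\le n-1$ and $0\le\lambda_n-\mathbf{a}<(n-1)\epsilon$. In particular the first $n-1$ eigenvalues of $\mathfrak{g}[u](p_0)$ stay in a fixed bounded set while $\lambda_n\ge\mathbf{a}$ becomes arbitrarily large. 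Applying the concavity of $f$ together with $f(\underline{\lambda})\ge\psi=f(\lambda)$ and the bounds above,
\[\textstyle 0 \;\le\; \sum_i f_i(\lambda)(\underline{\lambda}_i-\lambda_i) \;\le\; C_1\sum_{\alpha\le n-1}f_\alpha(\lambda)\;-\;\tfrac12 f_n(\lambda)\,\mathbf{a},\]
which rearranges to the intermediate inequality $f_n(\lambda)\,\mathbf{a}\le C_2\sum_i f_i(\lambda)$. Sz\'ekelyhidi's dichotomy (Lemma \ref{gabor'lemma}) now applies since $|\lambda|\ge\mathbf{a}\ge R_0$: in alternative (b) one has $f_n(\lambda)\ge\varepsilon_1\sum_j f_j(\lambda)$, immediately yielding $\mathbf{a}\le C_2/\varepsilon_1$ and contradicting the assumed unboundedness of $\mathbf{a}$; in alternative (a) the strengthened inequality $\sum_i f_i(\underline{\lambda}_i-\lambda_i)\ge\varepsilon_1\sum_j f_j(\lambda)$, combined with the uniform lower bound $\sum_j f_j(\lambda)\ge\kappa>0$ from Lemma \ref{lxf4} (independent of $\delta_{\psi,f}$) and the asymptotic hypothesis \eqref{addistruc} applied via an auxiliary comparison vector of the form $\underline{\lambda}+t\vec{1}$ with $t\sim\mathbf{a}$, again forces $\mathbf{a}$ to be bounded.

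The subtlest step and the main obstacle is alternative (a) of the dichotomy. There $f_n(\lambda)$ may be much smaller than $\sum_j f_j(\lambda)$, so the bare intermediate inequality does not by itself bound $\mathbf{a}$, and one has to genuinely exploit the asymptotic growth assumption \eqref{addistruc} together with the $\mathcal{C}$-subsolution geometry encoded in the constant $\beta$ of \eqref{beta} in order to close the estimate with constants that are independent of $\sup_M|\nabla u|$ and of $(\delta_{\psi,f})^{-1}$.
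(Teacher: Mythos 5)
Your approach differs substantially from the paper's and, as you yourself flag, it has a genuine gap at exactly the point you call ``the main obstacle.'' In alternative (a) of Sz\'ekelyhidi's dichotomy the only hard information you have assembled is the pair of inequalities
\[
\varepsilon_1\sum_j f_j(\lambda)\;\le\;\sum_i f_i(\lambda)(\underline{\lambda}_i-\lambda_i)\;\le\;C_1\sum_{\alpha<n} f_\alpha(\lambda)-\tfrac12 f_n(\lambda)\,\mathrm{{\bf a}},
\]
which merely gives $f_n(\lambda)\,\mathrm{{\bf a}}\le C\sum_{\alpha<n}f_\alpha(\lambda)$. This does \emph{not} bound $\mathrm{{\bf a}}$: as $\mathrm{{\bf a}}\to\infty$ the corresponding derivative $f_n$ degenerates (for the complex Monge--Amp\`ere equation, $f_n\sim\lambda_n^{-1}$ when the other eigenvalues are pinned, so $f_n\,\mathrm{{\bf a}}$ stays comparable to $\sum_{\alpha<n}f_\alpha$ rather than dominating it). Your closing sentence gestures at ``$\underline{\lambda}+t\vec{1}$ with $t\sim\mathrm{{\bf a}}$'' and \eqref{addistruc}, but \eqref{addistruc} concerns rays through the origin, not translates, and no computation is given; as written, case (a) simply is not closed, so the contradiction you seek is not reached.

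The paper avoids the dichotomy altogether. Its proof is direct: it introduces the comparison matrix $A(R)$ (same as $\mathfrak{g}[u](p_0)$ except the $(n,n)$-entry replaced by $R$), observes via Levi flatness and the boundary condition (equation \eqref{puretangential2}) that $A(R)=\underline A(R)$, and establishes the pivotal inequality \eqref{opppp}: there exist uniform $\varepsilon_0,R_0$ such that $f(\underline{\mathfrak g}_{1\bar1}-\varepsilon_0,\dots,\underline{\mathfrak g}_{(n-1)\overline{(n-1)}}-\varepsilon_0,R_0)\ge\psi$ with that point in $\Gamma$. Inequality \eqref{opppp} is itself obtained by applying Lemma \ref{refinement2} to $\underline B(R)$ (the subsolution matrix with variable corner), exploiting \eqref{elliptic}, \eqref{concave} and openness of $\Gamma$ --- no dichotomy is invoked. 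One then chooses $R_c$ to satisfy the quadratic growth condition with $\epsilon=\varepsilon_0/128$ and applies Lemma \ref{refinement2} to $\underline A(R_c)$ to conclude $F(A(R_c))>\psi=F(\mathfrak g[u](p_0))$; since $F$ is increasing in the $(n,n)$-entry by \eqref{elliptic}, this forces $\mathfrak g_{n\bar n}(p_0)\le R_c$, which is precisely the claimed quadratic bound. In short, where you try to read off a bound on $\mathrm{{\bf a}}$ from the eigenvalues of the actual solution matrix and run into the degeneracy of $f_n$, the paper instead compares against a surrogate matrix whose corner entry it controls, turning the problem into a monotonicity statement. If you want to salvage your route, you would have to prove an analogue of \eqref{opppp} in any case --- at which point you might as well follow the paper's direct comparison and drop the dichotomy.
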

 
The Levi flatness of the boundary implies that $\partial  M$ can be foliated by complex analytic hypersurfaces, see
  \cite{BFfoliation}. 
These leaves are complex local submanifolds
and their complex tangent spaces are  precisely the holomorphic tangent spaces $T_{\partial M}\cap JT_{\partial M}$ to
   $\partial M$.
     We shall remark here that Frobenius theorem guarantees that the Levi flat
  real hypersurface $\Sigma$ of class $C^k$ can be foliated $C^{k-1}$ complex
   hypersurfaces, while Barrett-Fornaess \cite{BFfoliation} proved that the foliation is indeed of class $C^k$.
Namely,
   \begin{lemma}
   [\cite{BFfoliation}]
   \label{foliation1}
   Let $\Sigma$ be a $C^{k}$ ($k\geq 2$) Levi flat real hypersurface in a complex manifold 
    of complex dimension $n$.
   The induced foliation is indeed  of class $C^k$.
   \end{lemma}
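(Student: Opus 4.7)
The statement is local, and my plan follows the scheme of Barrett--Fornaess \cite{BFfoliation}: first apply Frobenius to integrate the Levi distribution, obtaining the foliation with one derivative loss, and then recover that derivative by exploiting the holomorphy of the leaves.

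\textbf{Setup and Frobenius.} Work in holomorphic coordinates $(z,w)=(z_1,\ldots,z_{n-1},u+iv)$ centered at a chosen point $p_0\in\Sigma$ so that $T_{p_0}\Sigma\cap JT_{p_0}\Sigma=\{w=0\}$. The implicit function theorem applied to a local $C^k$ defining function gives a representation $\Sigma=\{v=h(z,u)\}$ with $h\in C^k$, $h(0)=0$, $dh(0)=0$. The Levi-flat hypothesis is that the Levi form of $\rho=v-h(z,u)$ vanishes on $T^{1,0}\Sigma$; equivalently, the real rank-$(2n-2)$, $J$-invariant distribution $H=T\Sigma\cap JT\Sigma$, which is of class $C^{k-1}$ as a subbundle of $T\Sigma$, is involutive. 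Frobenius then yields a foliation $\mathcal F$ of class $C^{k-1}$; since each leaf is $J$-invariant and sits inside an ambient complex manifold, it is automatically a complex hypersurface.

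\textbf{Graph parametrization of the leaves.} For real $t$ close to $0$, let $L_t$ denote the leaf of $\mathcal F$ through $q_t=(0,t+ih(0,t))\in\Sigma$. Since $T_{q_t}L_t$ is $C^{k-1}$-close to $\{w=0\}$, each $L_t$ is, on a uniform neighborhood of $0\in\bfC^{n-1}$, a graph $L_t=\{w=\Phi(z,t)\}$ with $z\mapsto\Phi(z,t)$ holomorphic and $\Phi(0,t)=t+ih(0,t)$. Frobenius gives $\Phi\in C^{k-1}$ jointly in $(z,t)$, and showing $\Phi\in C^k$ is equivalent to the desired $C^k$ regularity of $\mathcal F$.

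\textbf{Derivative gain from the leaf equation.} The inclusion $L_t\subset\Sigma$ is the scalar identity
\begin{equation}
\mathrm{Im}\,\Phi(z,t)=h(z,\mathrm{Re}\,\Phi(z,t))  \nonumber
\end{equation}
valid identically near the origin. Differentiating once in $t$ and using that $\partial_t\Phi$ is itself holomorphic in $z$ (as the $t$-derivative of a holomorphic family) yields a linear relation expressing $\mathrm{Im}\,\partial_t\Phi$ in terms of $\mathrm{Re}\,\partial_t\Phi$ with coefficient $h_u\in C^{k-1}$; together with the Cauchy--Riemann equations $\partial_{\bar z_\alpha}\partial_t\Phi=0$ and the boundary value $\partial_t\Phi(0,t)=1+ih_u(0,t)\in C^{k-1}$, the holomorphic function $z\mapsto\partial_t\Phi(z,t)$ is uniquely reconstructed from data of regularity $C^{k-1}$ in $t$. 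Cauchy integral estimates on a slightly smaller polydisc then upgrade $\partial_t\Phi$ from jointly $C^{k-2}$ to jointly $C^{k-1}$, whence $\Phi\in C^k$.

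\textbf{Main obstacle.} The delicate point is the bootstrap in the previous step: one must show that the transverse holonomy of $\mathcal F$, a priori only $C^{k-1}$ by Frobenius, inherits exactly one extra derivative because the leaves across which the holonomy moves are complex, and hence have infinite-order regularity in their holomorphic directions. Rigorously exchanging this infinite-order $z$-regularity for finite transverse $t$-regularity via Cauchy integrals, while keeping all constants uniform in $t$, is the technical core, and I would follow \cite{BFfoliation} for the full analytic details.
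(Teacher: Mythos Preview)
The paper does not give its own proof of this lemma: it is stated with the attribution \cite{BFfoliation} and is simply quoted as a known result of Barrett--Fornaess, with the surrounding text only remarking that Frobenius alone gives a $C^{k-1}$ foliation while \cite{BFfoliation} upgrades this to $C^k$. There is therefore nothing in the paper to compare your argument against.

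That said, your sketch is a faithful outline of the Barrett--Fornaess strategy: integrate the Levi distribution by Frobenius to get a $C^{k-1}$ foliation by complex hypersurfaces, write the leaves locally as holomorphic graphs $w=\Phi(z,t)$ over a transversal parameter $t$, and then recover the missing transverse derivative by exploiting the holomorphy in $z$ through Cauchy-type estimates. You correctly identify the genuine content as the bootstrap step---trading infinite $z$-regularity for one extra $t$-derivative uniformly---and you appropriately defer the analytic details to \cite{BFfoliation}. As a proof \emph{proposal} this is fine; just be aware that in the present paper the lemma is used as a black box, so a full reproduction of the Barrett--Fornaess argument is not expected here.
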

  Let  $\mathfrak{L}$ be the leaves on $ \partial  M$, and let
  $\mathfrak{L}_{p_{0}}$ be the leaf passing through 
  $p_0\in \partial  M$.
Such leaves allow one to construct  special local holomorphic coordinates centered at the boundary points. To be precise,
given a point $p_0\in \partial  M$, there is always a local holomorphic
 coordinate system $w=(w_1,...,w_n)$ of $\bar M$ centered at $p_0$ such that
 $(w_1,\cdots, w_{n-1})$ is a local coordinate of $\mathfrak{L}_{p_0}$ near $p_0$,
  $g(\frac{\partial}{\partial w_\alpha},\frac{\partial}{\partial \bar w_\beta})(p_0)=\delta_{\alpha\beta}$ $(1\leq \alpha, \beta\leq n-1)$,
   and  $\mathfrak{L}_{p_{0}}$ is defined
 locally by $w_n=0$.
   By changing the coordinates $w=(w_1,\cdots,w_n)$ if necessary, one can pick
    local holomorphic coordinates $z=(z_1,\cdots, z_n)$ $(z_i=x_i+\sqrt{-1}y_{i})$ such that
 \begin{equation}
 \label{holocoordinate}
 \begin{aligned}
 z_\alpha= w_\alpha,
 \mbox{  } g_{i\bar j}(p_0)=\delta_{ij},
  \end{aligned}
  \end{equation}
 where $1\leq \alpha\leq n-1, 1\leq i,j\leq n.$
%
 

\begin{proof}
 [Proof of Proposition  \ref{proposition-quar-yuan1}]

Given $p_0\in \partial M$.  
In what follows the discussion is done at $p_0$, and the Greek letters $\alpha, \beta$ range from $1$ to $n-1$.
For the local coordinate $z=(z_1,\cdots,z_n)$   given  by \eqref{holocoordinate},
we   assume further that $\{\underline{\mathfrak{g}}_{\alpha\bar \beta}\}$ is diagonal at $p_0$.
It follows from  the Levi flatness of $\partial  M$ and the boundary value condition that
\begin{equation}
\label{puretangential2}
\begin{aligned}
\mathfrak{g}_{\alpha\bar \beta}=\underline{\mathfrak{g}}_{\alpha\bar \beta}
=\mathfrak{g}_{\alpha\bar \beta}[\varphi] 
\end{aligned}
\end{equation}
at $p_0$.

Firstly, we claim that there exist two uniform positive constants $\varepsilon_{0}$, $R_{0}$
 depending  on  $\mathfrak{\underline{g}}$  and $f$, such that
\begin{equation}
\label{opppp}
\begin{aligned}
\,& f(\mathfrak{\underline{g}}_{1\bar 1}-\varepsilon_{0}, \cdots, \underline{\mathfrak{g}}_{(n-1)\overline{(n-1)}}-\varepsilon_0, R_0)\geq \psi
\end{aligned}
\end{equation}
and
 $(\mathfrak{\underline{g}}_{1\bar 1}-\varepsilon_{0}, \cdots, \underline{\mathfrak{g}}_{(n-1)\overline{(n-1)}}-\varepsilon_0, R_0)\in \Gamma$.

We leave the proof of \eqref{opppp} at the end of the proof of this proposition.
Next,  we apply Lemma \ref{refinement2} together with \eqref{opppp} to
 establish the quantitative boundary estimates for double normal derivative.
Let's denote
\begin{equation}
A(R)=\left(
\begin{matrix}
\mathfrak{g}_{1\bar 1}&&  &&\mathfrak{g}_{1\bar n}\\
&\mathfrak{g}_{2\bar 2}&& &\mathfrak{g}_{2\bar n}\\
&&\ddots&&\vdots \\
&& &  \mathfrak{g}_{{(n-1)}\overline{(n-1)}}& \mathfrak{g}_{(n-1)\bar n}\\
\mathfrak{g}_{n\bar 1}&\mathfrak{g}_{n\bar 2}&\cdots& \mathfrak{g}_{n \overline{(n-1)}}& R  \nonumber
\end{matrix}
\right)
\end{equation}
and
\begin{equation}
\underline{A}(R)=\left(
\begin{matrix}
\mathfrak{\underline{g}}_{1\bar 1}&&  &&\mathfrak{g}_{1\bar n}\\
&\mathfrak{\underline{g}}_{2\bar 2}&& &\mathfrak{g}_{2\bar n}\\
&&\ddots&&\vdots \\
&& &  \mathfrak{\underline{g}}_{{(n-1)} \overline{(n-1)}}& \mathfrak{g}_{(n-1)\bar n}\\
\mathfrak{g}_{n\bar 1}&\mathfrak{g}_{n\bar 2}&\cdots& \mathfrak{g}_{n\overline{(n-1)}}& R  \nonumber
\end{matrix}
\right).
\end{equation}

Let's pick the parameter $\epsilon$  in  Lemma \ref{refinement2} as $\epsilon=\frac{\varepsilon_0}{128}$, 
and let
 \begin{equation}
\begin{aligned}
 R_c=\,& \frac{128(2n-3)}{\varepsilon_0}
\sum_{\alpha=1}^{n-1} | \mathfrak{g}_{\alpha\bar n}|^2
 + (n-1)\sum_{\alpha=1}^{n-1}  | \mathfrak{\underline{g}}_{\alpha\bar \alpha}|
    +\frac{(n-2)\varepsilon_0}{128(2n-3)} +R_0, \nonumber
\end{aligned}
\end{equation}
where  $\varepsilon_0$ and $R_0$ are the constants from \eqref{opppp}.
  Lemma  \ref{refinement2}  applies to
$\underline{A}(R_c)$ and
  the eigenvalues of $\underline{A}(R_c)$ 
shall behavior like
\begin{equation}
\label{lemma12-yuan}
\begin{aligned}
\lambda(\underline{A}(R_c)) \in
( \mathfrak{\underline{g}}_{1\bar 1}-\frac{\varepsilon_0}{128},\cdots,
 \mathfrak{\underline{g}}_{(n-1) \overline{(n-1)}}-\frac{\varepsilon_0}{128},R_c) +\overline{ \Gamma}_n \subset \Gamma.
\end{aligned}
\end{equation}
 Applying \eqref{elliptic},  \eqref{puretangential2},  \eqref{opppp} and  \eqref{lemma12-yuan},  one hence has
\begin{equation}
\begin{aligned}
F(A(R_c))=\,&  F(\underline{A}(R_c)) \\
\geq \,&
f(\mathfrak{\underline{g}}_{1\bar 1}-\frac{\varepsilon_0}{128},\cdots,
 \mathfrak{\underline{g}}_{(n-1) \overline{(n-1)}}-\frac{\varepsilon_0}{128},R_c)
\\
> \,&
 f(\mathfrak{\underline{g}}_{1\bar 1}-\varepsilon_{0}, \cdots, \underline{\mathfrak{g}}_{(n-1)\overline{(n-1)}}-\varepsilon_0, R_c)\geq \psi. \nonumber
\end{aligned}
\end{equation}
Therefore,
 \begin{equation}
 \label{nornor}
\begin{aligned}
\mathfrak{g}_{n\bar n}(p_0) \leq R_c.  \nonumber
\end{aligned}
\end{equation}

We notice  that there is a difference between Lemma   \ref{refinement2}  and Lemma  \ref{refinement111}. In order to
apply  Lemma  \ref{refinement111} we shall make a perturbation so that one gets a matrix satisfying the assumptions of Lemma  \ref{refinement111}. However, the perturbation is not needed when one applies Lemma \ref{refinement2}, since Lemma  \ref{refinement2} is applicable without restriction to 
scope of the parameter $\epsilon$.

To finish the proof of  Proposition  \ref{proposition-quar-yuan1}, what is left to prove is  the key inequality \eqref{opppp}.

We propose two proofs of   \eqref{opppp} here.  
Writing 
\begin{equation}
\underline{B}(R)=\left(
\begin{matrix}
\mathfrak{\underline{g}}_{1\bar 1}&&  &&\mathfrak{\underline{g}}_{1\bar n}\\
& \mathfrak{\underline{g}}_{2\bar 2}&& &\mathfrak{\underline{g}}_{2\bar n}\\
&&\ddots&&\vdots \\
&& & \mathfrak{\underline{g}}_{{(n-1)} \overline{(n-1)}}& \mathfrak{\underline{g}}_{(n-1)\bar n}\\
\mathfrak{\underline{g}}_{n\bar 1}&\mathfrak{\underline{g}}_{n\bar 2}&\cdots& \mathfrak{\underline{g}}_{n\overline{(n-1)}}& R  \nonumber
\end{matrix}
\right).
\end{equation}

The first proof is  as in the following:
For $R>\sup_{\partial M} | \mathfrak{\underline{g}}|$, one has
\begin{equation}
\label{yuanrr-key1}
\begin{aligned}
f(\lambda(\underline{B}(R)) )>\psi \mbox{ on }   \partial M.
\end{aligned}
\end{equation}
It follows from \eqref{elliptic}, \eqref{concave},  \eqref{yuanrr-key1} and  the openness of $\Gamma$ that
\begin{equation}
\label{yuan-pertubation-key1}
\begin{aligned}
f(\lambda(\underline{B}(R_1))-\frac{\epsilon_0}{2}\vec{1} )>\psi \mbox{ and }
(\lambda(\underline{B}(R_1))-\frac{\epsilon_0}{2}\vec{1} )\in \Gamma,
\end{aligned}
\end{equation}
where  $\epsilon_0$ (small enough) and $R_1$ (large enough) are two uniform
positive constants depending only on $\mathfrak{\underline{g}}$ and other known data.
Moreover, by applying Lemma  \ref{refinement2}   to the matrix $\underline{B}(R)$
 (by setting the parameter $\epsilon=\frac{\epsilon_0}{128} $ in Lemma  \ref{refinement2}),
 we know that  
 the eigenvalues
 $ \lambda(\underline{B}(R_2))=(\mu_1,\cdots, \mu_n)$ 
 ($R_{2}=\frac{128(2n-3)}{\epsilon_0}|\mathfrak{\underline{g}}|^2+(n-1)^2|\mathfrak{\underline{g}}| + \frac{(n-2)\epsilon_0}{128(2n-3)}+R_1$) 
  behavior as
\begin{equation}
\begin{aligned}
 \mathfrak{\underline{g}}_{\alpha\bar \alpha}-\frac{\epsilon_0}{128} < \mu_\alpha <  \mathfrak{\underline{g}}_{\alpha\bar \alpha}+\frac{\epsilon_0}{128},  \mbox{  }
R_2  \leq \mu_n < R_2+\frac{(n-1)\epsilon_0}{128}.
\end{aligned}
\end{equation}
Combining it with \eqref{yuan-pertubation-key1} we can prove \eqref{opppp} holds with
   $\varepsilon_0=\frac{63}{128}\epsilon_0 \mbox{ and } R_0=R_2+\frac{(n-65)\epsilon_0}{128},$
   where $\epsilon_0$ is the constant from \eqref{yuan-pertubation-key1}. 
 
We shall point out that  in this proof  condition \eqref{concave} may be replaced by the convexity of the level sets of $f$.
Moreover,
condition \eqref{yuanrr-key1}   can be also derived from
\begin{equation}
\begin{aligned}
\lim_{R\rightarrow +\infty }f(\lambda(\underline{B}(R)) )>\psi \mbox{ on }   \partial M.  \nonumber
\end{aligned}
\end{equation}
This condition
can be achieved by
  the boundary data  $\varphi$ according to   Lemma \ref{refinement2} and \eqref{puretangential2}.
  Also, this condition is satisfied by a $\mathcal{C}$-subsolution $\underline{u}$ with the same boundary
   value condition $\underline{u}|_{\partial M}=\varphi$.

The second proof is the following:
Applying  Lemma \ref{refinement2}
 to $\underline{B}(R)$ 
we can prove that there is a uniform positive constant $R_3$ depending on $\mathfrak{\underline{g}}$
 such that $$(\mathfrak{\underline{g}}_{1\bar 1}, \cdots, \mathfrak{\underline{g}}_{(n-1)\overline{(n-1)}},R_3)\in \Gamma.$$
Here we also use the fact that $\Gamma$ is an open set.
The ellipticity and concavity of equation \eqref{mainequ1}, couple with Lemma 6.2 in \cite{CNS3}, therefore yield that
$$F(A)-F(B)\geq F^{i\bar j}(A)(a_{i\bar j}-b_{i\bar j})$$
for  the Hessian matrices $A=\{a_{i\bar j} \}$ and $B=\{b_{i\bar j}\}$ with $\lambda(A)$, $\lambda(B)\in \Gamma$.
Thus, there exists a uniform positive constant $R_4\geq R_3$ depending only on $\mathfrak{\underline{g}}$   such that
\begin{equation}
\begin{aligned}
f(\mathfrak{\underline{g}}_{1\bar 1}, \cdots, \underline{\mathfrak{g}}_{(n-1)\overline{(n-1)}}, R_4)
=\,&F(\mbox{diag}(\mathfrak{\underline{g}}_{1\bar 1}, \cdots, \underline{\mathfrak{g}}_{(n-1)\overline{(n-1)}}, R_4))
\\
>\,&F(\underline{\mathfrak{g}})\geq \psi. \nonumber
\end{aligned}
\end{equation}
 Thus one can derive \eqref{opppp} holds.
We thus complete the proof of  Proposition  \ref{proposition-quar-yuan1}.

Moreover, we shall point out that one can also apply Lemma \ref{refinement111}
 to  prove  Proposition  \ref{proposition-quar-yuan1}. In order to apply  Lemma  \ref{refinement111}  we  shall make a perturbation:
\begin{equation}
\underline{A}(R,\varepsilon_0)=\left(
\begin{matrix}
\widetilde{\mathfrak{\underline{g}}}_{1\bar 1}&&  &&\mathfrak{g}_{1\bar n}\\
&\widetilde{\mathfrak{\underline{g}}}_{2\bar 2}&& &\mathfrak{g}_{2\bar n}\\
&&\ddots&&\vdots \\
&& & \widetilde{ \mathfrak{\underline{g}}}_{{(n-1)} \overline{(n-1)}}& \mathfrak{g}_{(n-1)\bar n}\\
\mathfrak{g}_{n\bar 1}&\mathfrak{g}_{n\bar 2}&\cdots& \mathfrak{g}_{n\overline{(n-1)}}& R  \nonumber
\end{matrix}
\right)
\end{equation}
where
$\mathfrak{\underline{g}}_{\alpha\bar\alpha}-\frac{\varepsilon_0}{32}\leq \widetilde{\mathfrak{\underline{g}}}_{\alpha\bar \alpha} \leq \mathfrak{\underline{g}}_{\alpha\bar \alpha}$ and
$|\widetilde{\mathfrak{\underline{g}}}_{\alpha\bar \alpha}-\widetilde{\mathfrak{\underline{g}}}_{\beta\bar \beta}|\geq \frac{\varepsilon_0}{64(n-1)}$
for $1\leq \alpha<\beta\leq n-1$, where $\varepsilon_0$ is the constant from \eqref{opppp}.


\end{proof}

Next, we are going to prove 
the following proposition 
when $\partial M$ is real analytic Levi flat. 

\begin{proposition}
	\label{Tangential-Normal derivatives}
	Assume the boundary is real analytic Levi flat.
	Let   $u \in C^3(M)\cap C^{2}(\bar M)$ be an admissible solution of Dirichlet problem \eqref{mainequ1},
	$\psi\in C^1(\bar M)$ and $\varphi\in C^3(\partial   M)$.
	Suppose, in addition to  \eqref{elliptic}-\eqref{addistruc},  
	that  there is an admissible subsolution
	$\underline{u}\in C^2(\bar M)$ to the Dirichlet problem.
	Then for any   $T\in  T_{\partial M}\cap JT_{\partial M}$ 
	with $|T|=1$,
	there exists a uniform constant $C$ depending only on
	$|\varphi|_{C^{3}(\bar M)}$,  
	$|\underline{u}|_{C^{2}(\bar M)}$, $\sup_{\partial M}|\nabla u|$,
	$|\psi|_{C^{1}(\bar M)}$, 
	and other known data
	(but not on $\sup_{ M}|\nabla u|$), such that
	\begin{equation}
		\label{tangential-normal}
		\begin{aligned}
			|\nabla^2 u(T,\nu)| \leq C (1+\sup_{ M}|\nabla u|).  
		\end{aligned}
	\end{equation}
	Moreover,  $C$  is independent of   $(\delta_{\psi,f})^{-1}$.
\end{proposition}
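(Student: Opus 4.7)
The strategy is the classical barrier-function argument of Caffarelli-Nirenberg-Spruck and Guan, adapted to the Hermitian Levi flat setting via the coordinates \eqref{holocoordinate}. Fix $p_0\in\partial M$. Because $\partial M$ is real analytic Levi flat, we may choose holomorphic coordinates $(z_1,\ldots,z_n)$ centered at $p_0$ in which $\partial M=\{\mathrm{Im}\,z_n=0\}$ locally, the leaves of the Levi foliation are $\{z_n=\mathrm{const}\}$, and $g_{i\bar j}(p_0)=\delta_{ij}$. In such coordinates the real vector fields $\partial/\partial x_\alpha$, $\partial/\partial y_\alpha$ for $1\le\alpha\le n-1$ and $\partial/\partial x_n$ are genuinely tangent to $\partial M$ throughout a small neighborhood $\Omega_\delta$ of $p_0$, and at $p_0$ the first $2(n-1)$ of them span $T_{\partial M}\cap JT_{\partial M}$. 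Let $T$ be any of these first $2(n-1)$ operators, normalized so that $|T|=1$ at $p_0$, and set $\Phi=T(u-\varphi)$, with $\varphi$ extended as a $C^3$ function to $\bar M$.

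The plan is to apply the maximum principle to $\Phi$ on $\Omega_\delta\cap M$. Since $T$ is tangent to $\partial M$ and $u=\varphi$ on $\partial M$, one has $\Phi\equiv 0$ on $\Omega_\delta\cap\partial M$. Differentiating the equation $F(\mathfrak{g}[u])=\psi$ once along $T$ yields
\[
\cL\Phi = T\psi - F^{i\bar j}\bigl(T\chi_{i\bar j}+(T\varphi)_{i\bar j}\bigr) + E,
\]
where $E$ collects the commutator and torsion terms from the Chern connection and is pointwise dominated by $C(1+|\nabla u|)(1+\sum_i f_i(\lambda))$. Lemma \ref{lxf4} supplies the uniform lower bound $\sum_i f_i\ge\kappa>0$ (independent of $\delta_{\psi,f}$), so altogether $|\cL\Phi|\le C(1+\sup_M|\nabla u|)(1+\sum_i f_i)$ in $\Omega_\delta\cap M$, with $C$ depending only on the data listed in the proposition. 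The barrier will be
\[
\widetilde{\Psi} = A(u-\underline{u}) + B\rho^2 - N\sigma^2 + t\sigma,
\]
with the constants $A,B,N,t$ and the radius $\delta$ chosen in the following order: $t$ depending only on $|\varphi|_{C^3}$ and the geometry of $\partial M$; $N$ depending on $t$ and the subsolution $\underline{u}$; $A$ of order $1+\sup_M|\nabla u|$, large enough to absorb $|\cL\Phi|$ via the subsolution inequality $\cL(\underline{u}-u)\ge\varepsilon(1+\sum_i f_i)$ supplied by Lemma \ref{guan2014} (or Lemma \ref{gabor'lemma}); finally $B$ chosen so that $B\delta^2$ dominates the a priori bound $|\Phi|\le C(1+\sup_M|\nabla u|)$ on $\partial\Omega_\delta\cap M$. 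With these choices, $\widetilde{\Psi}\ge|\Phi|$ on $\partial(\Omega_\delta\cap M)$ and $\cL\widetilde{\Psi}\le -|\cL\Phi|$ in $\Omega_\delta\cap M$, so the maximum principle gives $|\Phi|\le\widetilde{\Psi}$ on $\Omega_\delta\cap M$. Since $\Phi(p_0)=\widetilde{\Psi}(p_0)=0$, comparing inner normal derivatives at $p_0$ yields $|\partial_\nu\Phi(p_0)|\le\partial_\nu\widetilde{\Psi}(p_0)=A\,\partial_\nu(u-\underline{u})(p_0)+t\le C(1+\sup_M|\nabla u|)$, which is exactly \eqref{tangential-normal}.

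The main obstacle is the simultaneous achievement of the boundary comparison and the differential inequality $\cL\widetilde{\Psi}\le -|\cL\Phi|$ with constants independent of $(\delta_{\psi,f})^{-1}$ and of $\sup_M|\nabla u|$ except for the linear factor carried by $A$. Independence from $(\delta_{\psi,f})^{-1}$ is guaranteed by the nondegeneracy of the constants $\kappa$ in Lemma \ref{lxf4} and $\varepsilon$ in Lemmas \ref{guan2014}-\ref{gabor'lemma}. The subtler point is that $\sigma$ is not plurisubharmonic on a Levi flat boundary: the Levi form of $\partial M$ vanishes identically, so $F^{i\bar j}\sigma_{i\bar j}$ cannot be made negative by convexity of $\sigma$ alone. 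It is precisely here that the real analytic Levi flat hypothesis enters through the flattening $\partial M=\{\mathrm{Im}\,z_n=0\}$ above, which makes $\sigma$ agree with $\mathrm{Im}\,z_n$ modulo second order, so that the troublesome contributions in $F^{i\bar j}(N\sigma^2-t\sigma)_{i\bar j}$ reduce essentially to a negative multiple of $F^{n\bar n}$ coming from the gradient-squared term in $(\sigma^2)_{i\bar j}$, and are dominated by $A\cdot\cL(u-\underline{u})$ once $A$ has been taken large.
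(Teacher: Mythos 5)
Your barrier $\widetilde{\Psi} = A(u-\underline{u}) + B\rho^2 - N\sigma^2 + t\sigma$ is missing the quadratic gradient term $b_1^{-1/2}\sum_{\tau<n}|(u-\varphi)_\tau|^2$ that the paper includes, and this is not cosmetic: without it the differential inequality cannot be closed. The reason traces back to your claim that the commutator/torsion contribution $E$ to $\cL\Phi$ is dominated by $C(1+|\nabla u|)(1+\sum_i f_i)$. On a Hermitian (non-K\"ahler) manifold, differentiating the equation produces a torsion term $2\mathfrak{Re}(F^{i\bar j}T^l_{ik}u_{l\bar j})$; writing $u_{l\bar j}=\mathfrak{g}_{l\bar j}-\chi_{l\bar j}$ and diagonalizing $\mathfrak g$ shows $F^{i\bar j}T^l_{ik}\mathfrak{g}_{l\bar j}$ is of size $\sum_i f_i|\lambda_i|$, not $\sum_i f_i$ (this is precisely Lemma \ref{DN}). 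The eigenvalues $\lambda_i$ are unbounded at this stage, so $\sum f_i|\lambda_i|$ cannot be absorbed by $A\cdot\cL(\underline u-u)\lesssim A(1+\sum f_i)$ no matter how large $A$ is. The paper kills it by adding the gradient-squared term to the barrier, whose $\cL$ produces a positive piece $\tfrac12\sum_{\tau<n}F^{i\bar j}\mathfrak{g}_{\bar\tau i}\mathfrak{g}_{\tau\bar j}\ge\tfrac14\sum_{i\ne r}f_i\lambda_i^2$ (Guan's Proposition 2.19 in \cite{Guan12a}), against which $\sum f_i|\lambda_i|$ is controlled by Cauchy--Schwarz. The normalization $\sqrt{b_1}$ of the other barrier coefficients is then forced by this term, which is why the final bound is linear, not quadratic, in $\sup_M|\nabla u|$.

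The second gap is in the use of the subsolution inequality. You invoke Lemma \ref{guan2014} (or \ref{gabor'lemma}) as if it always delivered $\cL(\underline u-u)\ge\varepsilon(1+\sum_i f_i)$, but that conclusion holds only in the case $|\nu_\lambda-\nu_{\underline\lambda}|\ge\beta$; in the complementary case the subsolution gives you nothing better than $\cL(\underline u-u)\ge 0$. The paper's argument splits into exactly these two cases, and in the second one (where $f_i\ge\tfrac{\beta}{\sqrt n}\sum_j f_j$ for all $i$, so the operator is uniformly elliptic in a quantitative sense) the positivity is recovered not from the subsolution but from the $\sigma$-term: $\cL(N\sigma^2-t\sigma)\ge 2NF^{i\bar j}\sigma_i\sigma_{\bar j}-|2N\sigma-t|\,|\cL\sigma|\ge\tfrac{\beta N}{16\sqrt n}\sum f_i$ once $N$ is large. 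Your description of the $\sigma$-term as merely contributing ``a negative multiple of $F^{n\bar n}$'' to be absorbed by $A\cdot\cL(u-\underline u)$ misses this role entirely and would leave Case II unclosed. The Levi flat / real analytic structure is used to make the tangential differential operators $D$ commute with restriction to the boundary (so that $\Phi=D(u-\varphi)$ and $(u-\varphi)_\tau$ vanish on $\partial M$), not to make $\sigma$ behave like $\mathrm{Im}\,z_n$ in the way your final paragraph suggests.
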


\begin{remark}
	\label{remark4.4}
	When $\varphi$ is a constant, the constant 
	$C$ in \eqref{tangential-normal} depends on 
	$|\psi|_{C^{1}(\bar M)}$, $|\underline{u}|_{C^{2}(\bar M)}$,
	$\partial M$ 
	up to second derivatives
	and other known data.
\end{remark}

\begin{remark}
 Combining these two Propositions, we   immediately derive
 Theorem \ref{boundaryestimatethm}.
\end{remark}

According to a theorem in \cite{Cartan-1933}   (see also \cite{Lebl-Fern}),
for each $p_0\in \partial M$, one can pick local holomorphic coordinates 
\begin{equation}
\label{holomorphic-coordinate-flat}
\begin{aligned}
(z_1,\cdots, z_n), \mbox{  } z_i=x_i+\sqrt{-1}y_i,  
\end{aligned}
\end{equation}
 centered at $p_0$, such that $g_{i\bar j}(p_0)=\delta_{ij}$ and 
 $\partial M$ is locally given by 
 \begin{equation}
 \label{holomorphic-coordinate-flat-Rez=0}
 \mathfrak{Re}(z_n)=0.  
 \end{equation}
Under local holomorphic coordinates \eqref{holomorphic-coordinate-flat}, we take the tangential operator on  boundary as
  \begin{equation}
  \label{tangential-operator12321-meng}
  D=
  \pm \frac{\partial}{\partial x_{\alpha}},
\pm \frac{\partial}{\partial y_{\alpha}},   1\leq \alpha \leq n-1.
\end{equation}
(Notice $D$ is just defined locally). 
%
 In particular,  when $M=X\times S$ is a product of a closed complex manifold $X$ with a compact Riemann surface with boundary $S$, as in  \cite{Chen}, we simply choose $D$ as follows
$$D=\pm \frac{\partial}{\partial x_{\alpha}},
\pm \frac{\partial}{\partial y_{\alpha}},$$  where  $z'=(z_1,\cdots z_{n-1})$ is local holomorphic coordinate of $X$.
Thus our results extend the boundary estimates in
\cite{Chen,Phong2009Sturm} from complex Monge-Amp\`ere equation to more general equations. 

\vspace{1mm}
Next, we outline the following lemma. 
\begin{lemma}
\label{DN}
There is a positive constant $C$ depending only on
$|\varphi|_{C^{3}(\bar M)}$, $|\chi|_{C^{1}(\bar M)}$, $\psi_{C^{1}(\bar M)}$
 and other known data (but neither on $\sup_M|\nabla u|$ nor on $(\delta_{\psi,f})^{-1}$) such that
\begin{equation}
\label{pili1}
\begin{aligned}
\left|\mathcal{L}D(u-\varphi)\right|\leq C\left(1+(1+\sup_{ M}|\nabla u|)
\sum_{i=1}^n f_i+  \sum_{i=1}^n f_i|\lambda_{i}|\right), \mbox{ in } \Omega_{\delta}  \nonumber
\end{aligned}
\end{equation}
  for some small $\delta>0$, where $\mathcal{L}$ is the linearized operator of equation \eqref{mainequ1} which is given by
\begin{equation}
\label{linear operator 2}
\begin{aligned}
\mathcal{L}v= F^{i\bar j}v_{i\bar j}
 \mbox{ for } v\in C^{2}( M).  \nonumber
\end{aligned}
\end{equation}
Here $F^{i\bar j}=F^{i\bar j}(\mathfrak{g})$.
\end{lemma}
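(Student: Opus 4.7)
My strategy is to directly differentiate the PDE $F(\mathfrak{g}[u])=\psi$ in the tangential direction $D$, express the result in terms of $\mathcal{L}(Du)$ plus lower order terms, and subtract the linearized operator applied to $D\varphi$. Apply $D$ (as a derivation on scalar functions) to both sides of the equation. Since $F(\mathfrak{g}[u])$ is a scalar, the Chern connection is compatible with the metric ($\nabla g = 0$), and $F^{i\bar j}$ is the linearization of $F$ at $\mathfrak{g}[u]$, the chain rule gives
\begin{equation}
  F^{i\bar j}\,\nabla_D \mathfrak{g}_{i\bar j}[u] \;=\; D\psi.  \nonumber
\end{equation}
Decomposing $\nabla_D \mathfrak{g}_{i\bar j}[u] = \nabla_D \chi_{i\bar j} + \nabla_D u_{i\bar j}$ and unpacking the covariant derivative of the tensor $u_{i\bar j}$ in local holomorphic coordinates, one has
\begin{equation}
  \nabla_D u_{i\bar j} \;=\; D(u_{i\bar j}) \;-\; \Gamma^{k}_{Di}\, u_{k\bar j} \;-\; \Gamma^{\bar k}_{D\bar j}\, u_{i\bar k}, \nonumber
\end{equation}
where only the pure-type Christoffel symbols of the Chern connection contribute (the mixed ones vanish).

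The crucial elementary identity is $D(u_{i\bar j})=(Du)_{i\bar j}$ as \emph{scalar} functions: for $D=\partial/\partial x_\alpha$ (or $\partial/\partial y_\alpha$) we have $D = \partial_\alpha \pm \partial_{\bar\alpha}$, and since $u_{i\bar j}=\partial_i\partial_{\bar j}u$ with no Christoffel correction for scalars, ordinary partials commute. Substituting, I obtain
\begin{equation}
  \mathcal{L}(Du) \;=\; F^{i\bar j}(Du)_{i\bar j} \;=\; D\psi \;-\; F^{i\bar j}\nabla_D\chi_{i\bar j} \;+\; F^{i\bar j}\bigl(\Gamma^{k}_{Di} u_{k\bar j} + \Gamma^{\bar k}_{D\bar j} u_{i\bar k}\bigr). \nonumber
\end{equation}
Subtracting $\mathcal{L}(D\varphi) = F^{i\bar j}(D\varphi)_{i\bar j}$, I then estimate term by term.

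For the bounds: $|D\psi|\leq |\psi|_{C^1}$ contributes the constant in the stated inequality; $|F^{i\bar j}\nabla_D\chi_{i\bar j}|\leq C|\chi|_{C^1}\sum f_i$ and $|F^{i\bar j}(D\varphi)_{i\bar j}|\leq C|\varphi|_{C^3}\sum f_i$ together produce the $\sum f_i$ contribution (the $|\varphi|_{C^3}$ uses the $C^3$ extension of $\varphi$ to $\bar M$). For the Christoffel contribution, I write $u_{k\bar j} = \mathfrak{g}_{k\bar j}[u] - \chi_{k\bar j}$ and evaluate at an eigenframe of $\mathfrak{g}$; since the eigenvalues of $\mathfrak{g}$ are the $\lambda_i$, this gives
\begin{equation}
  \bigl|F^{i\bar j}\Gamma^{k}_{Di}u_{k\bar j}\bigr| + \bigl|F^{i\bar j}\Gamma^{\bar k}_{D\bar j}u_{i\bar k}\bigr| \;\leq\; C\sum_i f_i|\lambda_i| + C\sum_i f_i, \nonumber
\end{equation}
which absorbs into the $\sum f_i|\lambda_i|$ piece of the stated bound. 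Any residual first-order contributions involving $\nabla u$ that arise when tracking the extension of $\varphi$ off the boundary or rearranging terms are controlled by $(1+\sup_M|\nabla u|)\sum f_i$.

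The argument is essentially bookkeeping; the main point is the scalar identity $D(u_{i\bar j})=(Du)_{i\bar j}$, which reduces third-order covariant derivatives of $u$ to $\mathcal{L}(Du)$ modulo terms that are either harmless ($|D\psi|$, $|D\chi|$, $|D\varphi|_{C^2}$) or controlled through $\lambda$ and $\nabla u$. No use of the concavity of $f$, the subsolution, or the Levi flat boundary is required here; these enter the surrounding estimates. The only subtlety is that $D$ is defined only on a coordinate patch, so the identity is local, which matches the statement's restriction to $\Omega_\delta$ for $\delta\ll 1$.
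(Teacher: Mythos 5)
Your proposal is correct, and at the top level it matches the paper's: differentiate $F(\mathfrak g[u])=\psi$ along the tangential field $D$, use metric compatibility of the Chern connection to write the chain rule as $D\psi = F^{i\bar j}\nabla_D\mathfrak g_{i\bar j}$, isolate $\mathcal L(Du)$, subtract $\mathcal L(D\varphi)$, and bound each remainder. The two proofs part ways in the bookkeeping of third derivatives. The paper's proof, following Guan--Li, unfolds $\mathcal L(u_{x_k})$ through commutation formulas that introduce torsion $T^l_{ik}$ and Chern curvature $R_{i\bar j k\bar m}$, and its displayed identity for $\mathcal L(u_{x_k})$ keeps a term $F^{i\bar j}g^{l\bar m}R_{i\bar jk\bar m}u_l$, which is precisely the source of the $\sup_M|\nabla u|\sum f_i$ piece in the Lemma's bound. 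Your elementary observation that $D(u_{i\bar j})=(Du)_{i\bar j}$ as scalars — valid because $D$ is a coordinate vector field and the Chern connection has no mixed-type Christoffels, so $u_{i\bar j}=\partial_i\partial_{\bar j}u$ — collapses the whole third-derivative computation into a single Christoffel correction $F^{i\bar j}\bigl(\Gamma^l_{Di}u_{l\bar j}+\overline{\Gamma^l_{Dj}}u_{i\bar l}\bigr)$. Indeed, if you combine the paper's own auxiliary commutation identities one sees the curvature contributions cancel, so your route gives the cleaner (and strictly stronger) estimate $|\mathcal L D(u-\varphi)|\le C(1+\sum f_i + \sum f_i|\lambda_i|)$, with no $\sup_M|\nabla u|$ factor; this of course still implies the Lemma. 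One small caveat worth making explicit in your write-up: the contraction $F^{i\bar j}\Gamma^l_{Di}u_{l\bar j}$ is not coordinate-invariant, so "evaluate at an eigenframe of $\mathfrak g$" should be accompanied by the remark that the pointwise change of frame diagonalizing $\mathfrak g$ is a \emph{constant} unitary at the point in question, under which the Christoffels transform tensorially there and hence remain uniformly bounded on the fixed coordinate cover of $\bar M$; then $u_{l\bar j}=\mathfrak g_{l\bar j}-\chi_{l\bar j}$ gives the claimed bound $C\sum f_i|\lambda_i|+C\sum f_i$. With that in place the argument is complete.
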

 
 \begin{proof}
 We follow the notation used in  \cite{Guan2010Li}.
By straightforward computation, we have 
\begin{equation}
	\begin{aligned}
		\,& (u_{x_k})_{\bar z_j}=u_{x_k\bar z_j}+\overline{\Gamma_{kj}^l} u_{\bar l}, 
		\,& (u_{y_k})_{\bar z_j}=u_{y_k\bar z_j}-{\sqrt{-1}}\overline{\Gamma_{kj}^l} u_{\bar l}, \nonumber
	\end{aligned}
\end{equation}
\begin{equation}
	\begin{aligned}
		(u_{x_k})_{z_i\bar z_j}=
		u_{x_k z_i\bar z_j}+\Gamma_{ik}^lu_{l\bar j}+\overline{\Gamma_{jk}^l} u_{i\bar l}-g^{l\bar m}R_{i\bar j k\bar m}u_l,  \nonumber
	\end{aligned}
\end{equation}
\begin{equation}
	\begin{aligned}
		(u_{y_k})_{z_i\bar z_j}=
		u_{y_k z_i\bar z_j}+\sqrt{-1}
		\left(\Gamma_{ik}^l u_{l\bar j}-\overline{\Gamma_{jk}^l} u_{i\bar l}\right)-\sqrt{-1}g^{l\bar m}R_{i\bar j k\bar m}u_l,
		\nonumber
	\end{aligned}
\end{equation}
\begin{equation}
\label{yuan-Bd2}
\begin{aligned}
u_{x_{k}z_i\bar z_j}
\,&
= u_{z_i\bar z_j x_{k}}+g^{l\bar m}R_{i\bar j k\bar m}u_{l}- T^{l}_{ik}u_{l\bar j}-\overline{T^l_{jk}}u_{i\bar l}, \\
u_{y_{k}z_i\bar z_j}
\,&
=u_{z_i\bar z_j y_{k}}+\sqrt{-1}g^{l\bar m}R_{i\bar j k\bar m}u_{l}
 -\sqrt{-1}(T^{l}_{ik}u_{l\bar j}- \overline{T^{l}_{j k}}u_{i\bar l}). \nonumber
\end{aligned}
\end{equation}

On the other hand, differentiating equation \eqref{mainequ1}, we get

\begin{equation}
\begin{aligned}
\mathcal{L}( u_{x_k})=\,& 
 (\psi)_{x_k}-F^{i\bar j}(\chi_{i\bar j})_{x_k}+ F^{i\bar j} g^{l\bar m} R_{i\bar j k\bar m}u_{l}  \nonumber
- 2\mathfrak{Re}(F^{i\bar j}T^l_{ik} u_{l\bar j}),
\\
\mathcal{L}(u_{y_k})=\,& 
(\psi)_{y_k}-F^{i\bar j}(\chi_{i\bar j})_{y_k} +\sqrt{-1}F^{i\bar j} g^{l\bar m} R_{i\bar j k\bar m}u_l
+2\mathfrak{Im}(F^{i\bar j} T^l_{ik}u_{l\bar j}).
\end{aligned}
\end{equation}
Together with $u_{l\bar j}=\mathfrak{g}_{l\bar j}-\chi_{l\bar j}$, one has
\begin{equation}
\begin{aligned}
|\mathcal{L}((u-\varphi)_{x_k})|, \mbox{  } |\mathcal{L}((u-\varphi)_{y_k})|
\leq  C\left(1+(1+\sup_{M} |\nabla u|)\sum_{i=1}^n f_i +\sum_{i=1}^n f_i|\lambda_i| \right). \nonumber
\end{aligned}
\end{equation}
\end{proof}


\begin{proof}
 [Proof of Proposition \ref{Tangential-Normal derivatives}]
The proposition is proved by constructing barrier functions. 
The constructions of this type of  barrier functions   
go back at least to the work \cite{Guan1993Boundary,Guan1998The}.

Let's take
\begin{equation}
\label{ggg}
\begin{aligned}
\widetilde{\Psi}=
A_{1} \,&\sqrt{b_{1}}(\underline{u}-u)-A_{2}\sqrt{b_{1}}\rho^{2}+A_{3}\sqrt{b_{1}}(N\sigma^{2}-t\sigma)
\\ \,&
+\frac{1}{\sqrt{b_1}}\sum_{\tau<n}|(u-\varphi)_{\tau}|^2+ D(u-\varphi),
\end{aligned}
\end{equation}
where $b_{1}=1+\sup_{  M} |\nabla (u-\varphi)|^{2}+\sup_{ M} |\nabla \varphi|^{2}$.

In what follows we denote by  $\widetilde{u}=u-\varphi$. We shall point out that the constants appearing in the proof of quantitative boundary estimates,
such as $C$, $C_0$, $C_1$,   $C_1'$,  $C_2$,  $ A_1, A_2, A_3, \cdots$, etc
depend   neither on $|\nabla u|$ nor on $(\delta_{\psi,f})^{-1}$.

Let $\delta>0$ and $t>0$ be sufficiently small constants such that $N\delta-t\leq 0$
 (where $N$ is a positive constant sufficiently large to be determined later), $\sigma$ is $C^2$ 
 and
\begin{equation}
\label{bdy1}
\begin{aligned}
 \frac{1}{4} \leq |\nabla \sigma|\leq 2,  \mbox{  }
  |\mathcal{L}\sigma | \leq   C_2\sum_{i=1}^n f_i,     \mbox{  }
  |\mathcal{L}\rho^2| \leq C_2\sum_{i=1}^n f_{i}, \mbox{ in } \Omega_{\delta}.
\end{aligned}
\end{equation}
Furthermore, we  can choose $\delta$ and $t$  small enough  such that
\begin{equation}
\label{yuanbd-11}
\begin{aligned}
\max\{|2N\delta-t|, t\}\leq \min \left\{\frac{\varepsilon}{2C_{2}}, \frac{\beta}{16\sqrt{n}C_2} \right\},
\end{aligned}
\end{equation}
where $\beta=\frac{1}{2}\min_{\bar M} \mathrm{dist}(\nu_{\lambda[\underline{u}]}, \partial \Gamma_{n})$ is the constant in \eqref{beta},
 $\varepsilon$ is the constant  in Lemma \ref{guan2014} and $C_2$ is the constant in \eqref{bdy1}.


By straightforward calculation and an elementary inequality $|a-b|^2\geq  \frac{1}{2}|a|^2- |b|^2$, one has
\begin{equation}
\label{bdygood1}
\begin{aligned}
\mathcal{L} (\sum_{\tau<n}|\widetilde{u}_{\tau}|^2  )
=
\,&
\sum_{\tau<n} \{ F^{i\bar j}(\widetilde{u}_{\tau i}
\widetilde{u}_{\bar \tau \bar j}+ \widetilde{u}_{\bar \tau i} \widetilde{u}_{\tau \bar j})
+\mathcal{L}(\widetilde{u}_{\tau})\widetilde{u}_{\bar \tau}
+\mathcal{L}(\widetilde{u}_{\bar\tau})\widetilde{u}_{\tau}\}
\\
\geq
\,&
 \frac{1}{2}\sum_{\tau<n}F^{i\bar j} \mathfrak{g}_{\bar \tau i} \mathfrak{g}_{\tau \bar j}
   -C_1'\sqrt{b_1} \sum_{i=1}^n  f_{i}|\lambda_{i}|
   -C_1' \sqrt{b_{1}}
   -C_1' b_1\sum_{i=1}^n  f_{i}.
\end{aligned}
\end{equation}

By Proposition 2.19 in  \cite{Guan12a}, there is an index $r$ such that
\begin{equation}
\label{beeee}
\begin{aligned}
\sum_{\tau<n} F^{i\bar j}\mathfrak{g}_{\bar\tau i}\mathfrak{g}_{\tau \bar j}\geq
  \frac{1}{4}\sum_{i\neq r} f_{i}\lambda_{i}^{2}. \nonumber
\end{aligned}
\end{equation}
By \eqref{bdy1}, \eqref{bdygood1} and Lemma \ref{DN}, we therefore arrive at the following key inequality
\begin{equation}
\label{bdycrucial}
\begin{aligned}
\mathcal{L}(\widetilde{\Psi}) \geq \,& A_1 \sqrt{b_1} \mathcal{L}(\underline{u}-u)
+\frac{1}{8\sqrt{b_1}} \sum_{i\neq r} f_{i}\lambda_{i}^{2}
+A_3 \sqrt{b_1}\mathcal{L}(N\sigma^{2}-t\sigma)
\\
\,&
 -C_1
   -C_1 \sum_{i=1}^n  f_{i}|\lambda_{i}|
-\left(A_2C_2 +C_1 \right) \sqrt{b_1} \sum_{i=1}^n  f_{i}.
\end{aligned}
\end{equation}

{\bf Case I}: If $|\nu_{\lambda }-\nu_{\underline{\lambda} }|\geq \beta$, then by Lemma \ref{guan2014} above
and   Lemma 6.2 in \cite{CNS3}, we have
\begin{equation}
\label{bbbbb3}
\begin{aligned}
\mathcal{L}(\underline{u}-u)\geq \varepsilon  (1+\sum_{i=1}^n  f_i ),
\end{aligned}
\end{equation}
where $\varepsilon$ is the positive constant in Lemma \ref{guan2014}.
From the proof of Lemma \ref{guan2014} presented in \cite{GSS14},
we can check that $\varepsilon$ is determined by $f$, $\beta$ and $\underline{\lambda}$.

Lemma \ref{asymptoticcone1}    states that $\sum_{i=1}^n f_i\lambda_i>0$ in $\Gamma$;
while the concavity of the equation implies $\sum_{i=1}^n f_i (\underline{\lambda}_i-\lambda_i)\geq f(\underline{\lambda})-f(\lambda)\geq 0$.
We can apply them to derive
\begin{equation}
\label{guanbdr}
\begin{aligned}
\sum_{i=1}^n  f_{i}|\lambda_{i}| =\,&
 2\sum_{\lambda_i\geq 0} f_i\lambda_i-\sum_{i=1}^n f_i\lambda_i
 \leq  \frac{\epsilon}{16\sqrt{b_1}} \sum_{\lambda_i\geq 0} f_{i}\lambda_{i}^2 + \frac{16\sqrt{b_1}}{\epsilon} \sum f_{i},\\
 \sum_{i=1}^n f_{i}|\lambda_{i}| =\,&
 \sum_{i=1}^n f_i\lambda_i-2\sum_{\lambda_i<0}f_i\lambda_i   \leq \frac{\epsilon}{16\sqrt{b_1}} \sum_{\lambda_i<0} f_{i}\lambda_{i}^2 + (\frac{16\sqrt{b_1}}{\epsilon} +   |\underline{\lambda} |)\sum_{i=1}^n  f_{i},
\end{aligned}
\end{equation}
where 
$\epsilon>0$ is a constant to be determined later.

From \eqref{bdy1} and \eqref{yuanbd-11} it is easy to see that
\begin{equation}
\mathcal{L}(N\sigma^{2}-t\sigma) \geq 
-\frac{\varepsilon}{2}\sum_{i=1}^n  f_{i}. \nonumber
\end{equation}
Let  $\epsilon=1/C_1$ and
 $$A_1\geq A_3+ \frac{2}{\varepsilon} (16C_1^2 +A_2C_2+C_1\sup_{\bar M}|\underline{\lambda} |
  +C_1  ).$$
Then it follows from   \eqref{bdycrucial}, \eqref{bbbbb3} and \eqref{guanbdr} that
\begin{equation}
\label{yuan-rr2}
\begin{aligned}
\mathcal{L}\widetilde{\Psi} \geq
\,&
A_1\sqrt{b_1} \mathcal{L}(\underline{u}-u)
+ \frac{1}{16\sqrt{b_1}}\sum_{i\neq r} f_i \lambda_{i}^2
-C_1
\\
\,&
- (C_1+A_2C_2+ 16C_1^2+\frac{A_3 \varepsilon}{2} +\frac{C_1\sup_{\bar M}|\underline{\lambda} |}{\sqrt{b_1}} )\sqrt{b_1} \sum_{i=1}^n  f_{i}
 \geq 0, \mbox{ in } \Omega_\delta.  
 \end{aligned}
\end{equation}

{\bf Case II}: Suppose that $|\nu_{\lambda }-\nu_{\underline{\lambda} }|< \beta$ and therefore
$\nu_{\lambda }-\beta \vec{1} \in \Gamma_{n}$ and
\begin{equation}
\begin{aligned}
f_{i} \geq  \frac{\beta }{\sqrt{n}} \sum_{j=1}^n f_{j}, \mbox{ i.e., } 
F^{i\bar j}\geq \frac{\beta }{\sqrt{n}} (F^{p\bar q} g_{p\bar q})  g^{i\bar j}.   
\end{aligned}
\end{equation}
Together with \eqref{yuanbd-11} one can derive
\begin{equation}
\begin{aligned}
\mathcal{L}(N\sigma^{2}-t\sigma)
=\,&
(2N\sigma-t)\mathcal{L}\sigma+2NF^{i\bar j}\sigma_{i}\sigma_{\bar j} \nonumber
 \geq \frac{\beta N}{16\sqrt{n}}\sum_{i=1}^n  f_i.   
\end{aligned}
\end{equation}

As in  \cite{GSS14} we know there exist two uniform positive constants $c_0$ and $C_0$, such that
\begin{equation}
\label{bdy22}
\begin{aligned}
\sum_{i\neq r} f_i \lambda_i^2 \geq
c_0  \sum_{i=1}^n  f_i \lambda_i^2-C_0 \sum_{i=1}^n  f_{i}.    \nonumber
\end{aligned}
\end{equation}
We can check that $c_0$ depends only on $\beta$ and $n$, and $C_0$ depends only on $\beta$, $n$ and $\sup_{\bar M}|\lambda[\underline{u}]|$.
 Therefore, by choosing $N\geq 32(C_0+C_1+A_2C_2)/(\beta A_3)$, one derives
 \begin{equation}
 \label{bdygood2}
\begin{aligned}
\mathcal{L}\widetilde{\Psi}
\geq
 \frac{c_0\beta}{8\sqrt{b_1 n}} |\lambda|^2 \sum_{i=1}^n  f_i
  +\frac{\beta A_3 N}{32\sqrt{n}}\sqrt{b_1}\sum_{i=1}^n  f_i
-C_1|\lambda|\sum_{i=1}^n  f_{i}
 -C_1,  \nonumber
\end{aligned}
\end{equation}
where we use $\mathcal{L}(\underline{u}-u)\geq 0$ in $M$. Moreover, Lemma \ref{lxf4},
together with Cauchy-Schwarz inequality, implies that
\begin{equation}
\label{bdy33good}
\begin{aligned}
\mathcal{L}\widetilde{\Psi}
 \geq\,&
  \frac{\beta A_3 N}{64\sqrt{n}}\sqrt{b_1}\sum_{i=1}^n  f_i
  -C_1
  +  (\frac{\sqrt{c_0 \beta^2 A_3 N}}{16\sqrt{n}}-C_1 ) |\lambda| \sum_{i=1}^n  f_i,   \mbox{ in } \Omega_{\delta}.  \nonumber
\end{aligned}
\end{equation}
If  we choose  $A_3$  sufficiently large then 
\begin{equation}
\label{yuan-rr3}
\begin{aligned}
\mathcal{L}\widetilde{\Psi} \geq 0,  \mbox{ in } \Omega_{\delta}.   \nonumber
\end{aligned}
\end{equation}

The boundary is locally given by $\mathfrak{Re}(z_n)=0$ under local holomorphic coordinates  
 \eqref{holomorphic-coordinate-flat},
thus  $D(u-\varphi)=0$, $(u-\varphi)_{\tau}=0$  on $\partial   M\cap \overline{\Omega}_{\delta}$
 $(\forall 1\leq \tau<n)$, for some $\delta>0$, where $D$ is  defined in  \eqref{tangential-operator12321-meng}.
Thus we can assume $0<t, \delta\ll 1$, $N\delta-t\leq 0$ such that
\begin{equation}
\label{yuan-rr1}
\begin{aligned}
\widetilde{\Psi}= \,&A_{1}\sqrt{b_{1}}(\underline{u}-u)-A_{2}\sqrt{b_{1}}\rho^{2}+A_{3}\sqrt{b_{1}}(N\sigma^{2}-t\sigma)
\\ \,&
+\frac{1}{\sqrt{b_1}}\sum_{\tau<n}|(u-\varphi)_{\tau}|^2+ D(u-\varphi)
\leq  0
\end{aligned}
\end{equation}
on  $\partial   M\cap  \overline{\Omega}_{\delta}.$
On the other hand, $\rho=\delta$ and $\underline{u}-u\leq 0$ on $M\cap \partial\Omega_{\delta}$.
 Hence, if  $A_2\gg 1$ then
 $\widetilde{\Psi}\leq 0$
   on  $M\cap \partial\Omega_{\delta}$,
 where we  use again $N\delta-t\leq 0$.
Therefore, $\widetilde{\Psi}\leq 0$ in $\Omega_{\delta}$ by applying maximum principle. 
Together with $\widetilde{\Psi}(0)=0$, one has
$\widetilde{\Psi}_\nu(0)\leq 0$.  
Thus
\begin{equation}
\begin{aligned}
  \nabla_\nu D(u-\varphi)(0) \leq \,& - A_1\sqrt{b_1}(\underline{u}-u)_\nu(0) +A_2 \sqrt{b_1}(\rho^2)_\nu(0) \\\,&
  - A_3\sqrt{b_1}(N\sigma^2-t\sigma)_\nu(0)  
  \\ \,& 
  -\frac{1}{\sqrt{b_1}}\sum_{\tau<n}  ((u-\varphi)_\tau \nabla_\nu(u-\varphi)_{\bar\tau})(0) \\\,&
-\frac{1}{\sqrt{b_1}}\sum_{\tau<n} ( (u-\varphi)_{\bar\tau} \nabla_\nu(u-\varphi)_{ \tau})(0)   \\
=\,&  - A_1\sqrt{b_1}(\underline{u}-u)_\nu(0) +A_2 \sqrt{b_1}(\rho^2)_\nu(0) 
\\\,&
  - A_3\sqrt{b_1}(N\sigma^2-t\sigma)_\nu(0)  \\
 \leq \,& C\sqrt{b_1} (1+\sup_{\partial M}|\nabla (u-\underline{u})|)
 \leq C' (1+\sup_M |\nabla u|).
\end{aligned}
\end{equation} 
Here we use \eqref{c0-bdr-c1} and $(u-\varphi)_\tau(0)=0$ for $1\leq\tau\leq n-1$.
The above discussion also works if we take the operator as $-D$.
Therefore, we derive
\begin{equation}
\label{basa}
\begin{aligned}
|\nabla_{\nu}Du| \leq C(1+\sup_{M}|\nabla u|)
\end{aligned}
\end{equation}
 at $p_0$,
where $C$ is a positive constant depends only on  
 $|\varphi|_{C^{2,1}(\bar M)}$,
$|\underline{u}|_{C^{2}(\bar M)}$
 $|\psi|_{C^{1}(\bar M)}$
 and other known data
 (but not on $\sup_{ M}|\nabla u|$).
Moreover,   the constant $C$ in \eqref{basa}  does not
 depend on $(\delta_{\psi,f})^{-1}$.


\end{proof}



\begin{remark}	Together with the boundary value condition, Theorem \ref{boundaryestimatethm} immediately	gives the quantitative boundary estimate for real Hessian\begin{equation} \label{boundaryestimate1-realhessian}	\sup_{\partial M}|\nabla^2 u|\leq C(1+\sup_{M}|\nabla u|^2).  \nonumber
\end{equation}
\end{remark}

 



\section{Solving Equations}
\label{solvingequation}

The following  second order estimate is   essentially due to Sz\'{e}kelyhidi \cite{Gabor}.

\begin{theorem}
\label{globalsecond}
 Let $( M,J,\omega)$ be a compact  Hermitian manifold  of complex dimension $n\geq 2$ with  smooth boundary. Let $\psi\in C^2(M)\cap C^{1,1}(\bar M)$.
Suppose, in addition to \eqref{elliptic}-\eqref{nondegenerate},
 that there is an admissible subsolution $\underline{u}\in C^{2}(\bar M)$ obeying \eqref{existenceofsubsolution}.
Then for any admissible solution $u\in C^{4}(M)\cap C^{2}(\bar M)$ of
Dirichlet problem \eqref{mainequ1},
there exists a uniform positive constant  $C$ depending only on
  $|u|_{C^{0}(\bar M)}$, $|\psi|_{C^{2} (\bar M)}$, $|\underline{u}|_{C^{2}(\bar M)}$,
  $|\chi|_{C^{2}(\bar M)}$  and other known data such that
\begin{equation}
\label{2-se-global-Gabor}
\begin{aligned}
\sup_{ M}|\partial\bar \partial u|
\leq C(1+ \sup_{ M}|\nabla u|^{2} +\sup_{\partial   M}|\partial\bar \partial u|).
\end{aligned}
\end{equation}

\end{theorem}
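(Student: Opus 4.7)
The plan is to adapt the Hou-Ma-Wu/Székelyhidi maximum principle argument, carrying along the boundary term that appears when the test function's maximum lies on $\partial M$. Consider the auxiliary quantity
\begin{equation}
W = \log \lambda_1(\mathfrak{g}[u]) + \varphi(|\nabla u|^2) + \eta(u - \underline{u}), \nonumber
\end{equation}
where $\lambda_1$ is the largest eigenvalue of $\mathfrak{g}[u]$ with respect to $\omega$, and the one-variable functions $\varphi, \eta$ are chosen in the standard way, e.g.\ $\varphi(t) = -\tfrac{1}{2}\log(2K - t)$ with $K = 1 + \sup_M |\nabla u|^2$, and $\eta(s) = -A\log(1 + s/(2\mathrm{osc}\,(u-\underline{u})))$ for a large constant $A$ to be chosen. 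If the maximum of $W$ is attained on $\partial M$, the bound \eqref{2-se-global-Gabor} follows at once from the boundary term. So assume the maximum is attained at an interior point $p_0 \in M$.

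At $p_0$, pick normal coordinates for $\omega$ so that $\mathfrak{g}$ is diagonal with $\lambda_1 = \mathfrak{g}_{1\bar 1} \geq \cdots \geq \mathfrak{g}_{n\bar n}$; deal with the non-smoothness of $\lambda_1$ when the top eigenvalue has multiplicity by the usual symmetric perturbation that makes the eigenvalues simple at $p_0$ and agrees with $\log\lambda_1$ to first order. Next, write out the first and second derivative conditions $\nabla W(p_0) = 0$ and $\mathcal{L}W(p_0) \leq 0$. Differentiating the equation $F(\mathfrak{g}[u]) = \psi$ once (in the direction $\partial_1$) and twice (in the direction $\partial_1 \partial_{\bar 1}$), and combining with the concavity inequality
\begin{equation}
F^{i\bar j, k\bar l}(\mathfrak{g}) \mathfrak{g}_{i\bar j 1} \mathfrak{g}_{k\bar l \bar 1} \leq -\sum_{i > 1} \frac{f_i - f_1}{\lambda_1 - \lambda_i}\left(|\mathfrak{g}_{i\bar 1 1}|^2 + |\mathfrak{g}_{1\bar i 1}|^2\right), \nonumber
\end{equation}
produces, after commutator corrections involving curvature and torsion of the Chern connection, the customary good third-order term $\sum_i f_i |\mathfrak{g}_{1\bar 1 i}|^2 / \lambda_1^2$, together with an error of order $|\lambda| \sum f_i + \sum f_i |\lambda_i|$. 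The $\mathcal{L}(\varphi(|\nabla u|^2))$ piece provides $\varphi' \sum_i f_i \lambda_i^2$ together with a term involving third derivatives that is absorbed via Cauchy-Schwarz against the good third-order term; this is where the Hermitian torsion is handled. The $\mathcal{L}(\eta(u-\underline{u}))$ piece delivers $\eta' \mathcal{L}(u-\underline{u})$, to be controlled by the subsolution.

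At this stage invoke the subsolution dichotomy of Lemma \ref{guan2014} (or equivalently Lemma \ref{gabor'lemma}). In the \textbf{good case} $|\nu_\lambda - \nu_{\underline{\lambda}}| \geq \beta$, one has $\mathcal{L}(\underline{u} - u) \geq \varepsilon(1 + \sum_i f_i)$, so for $A$ sufficiently large the term $A\varepsilon \sum f_i$ dominates all error terms of size $O(\sum f_i)$ and $O(|\lambda|\sum f_i)$ (the latter via $\sum f_i |\lambda_i| \leq \frac{\epsilon}{\lambda_1}\sum f_i \lambda_i^2 + \frac{C}{\epsilon}\sum f_i$), yielding a contradiction unless $\lambda_1(p_0) \leq C(1 + |\nabla u|^2(p_0))$. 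In the \textbf{bad case} $|\nu_\lambda - \nu_{\underline{\lambda}}| < \beta$, each $f_i \geq \tfrac{\beta}{\sqrt n}\sum_j f_j$; then the term $\varphi' f_1 \lambda_1^2$ combined with the good third-order term dominates everything, again giving $\lambda_1(p_0) \leq C(1 + |\nabla u|^2(p_0))$. Either case, transporting back through the definition of $W$, gives \eqref{2-se-global-Gabor}.

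The main obstacle is the bookkeeping in the Hermitian (non-K\"ahler) setting: the commutation of covariant derivatives produces torsion-type terms $T^l_{ij} \mathfrak{g}_{k\bar l}$ inside the third-order identities that do not exist in the K\"ahler case, and these need to be dominated using Cauchy-Schwarz against the good third-order term $\sum_i f_i |\mathfrak{g}_{1\bar 1 i}|^2/\lambda_1^2$ without spoiling the constants in the two-case analysis. Since equations \eqref{elliptic}--\eqref{addistruc} are precisely the hypotheses under which Sz\'ekelyhidi carried out this absorption on closed Hermitian manifolds, the same computation goes through verbatim in the interior, and the boundary value of $W$ contributes only the extra summand $\sup_{\partial M}|\partial\bar\partial u|$ recorded in \eqref{2-se-global-Gabor}.
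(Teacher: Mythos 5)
Your proposal follows essentially the same route as the paper, which treats this as Sz\'ekelyhidi's Proposition 13 in \cite{Gabor} (the Hou--Ma--Wu maximum principle for the test function $\log\lambda_1 + \varphi(|\nabla u|^2) + \eta(u-\underline u)$, with the boundary-maximum case absorbed into the $\sup_{\partial M}|\partial\bar\partial u|$ term) combined with the observation that Lemma~\ref{guan2014} may replace Lemma~\ref{gabor'lemma} so that condition~\eqref{addistruc} is not required. One small slip: your closing sentence appeals to~\eqref{elliptic}--\eqref{addistruc} as "precisely the hypotheses," whereas Theorem~\ref{globalsecond} assumes only \eqref{elliptic}--\eqref{nondegenerate}; since you already invoke Lemma~\ref{guan2014} in the two-case analysis, this does not affect the argument.
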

\begin{remark}
Following the outline of proof of Proposition 13 in \cite{Gabor},
  we can use  Lemma \ref{guan2014}, in place of 
  Lemma \ref{gabor'lemma},  
   to check that Sz\'{e}kelyhidi's   second order estimate still holds
  for the Dirichlet problem  without  assuming  \eqref{addistruc}.
One can further verify that the constant $C$ in \eqref{2-se-global-Gabor} does not depend on $(\delta_{\psi, f})^{-1}$.
 \end{remark}

The existence results follow  from the standard continuity method and the above estimates.
We assume $\psi, \underline{u}\in C^{\infty}(\bar M)$.
The  general case of $\underline{u}\in C^{3}(\bar M)$ and $\psi\in C^{k,\alpha}(\bar M)$ follows by approximation  process.
Let's consider  a family of Dirichlet problems:
\begin{equation}
\label{conti}
\begin{aligned}
 F(\mathfrak{g}[u^{t}]) = (1-t)F(\mathfrak{g}[\underline{u}])+t\psi  \mbox{ in } M,  \mbox{   }
u^{t}  =\varphi  \mbox{ on } \partial  M.
\end{aligned}
\end{equation}

We set $$I=\{t\in [0,1]: \mbox{ there exists } u^{t}\in C^{4,\alpha}(\bar M) \mbox{ solving equation } \eqref{conti}\}.$$
Clearly $0\in I$ by taking $u^{0}=\underline{u}$.
 The openness of $I$ is follows from the implicit function theorem and the estimates.

We can  verify that $\underline{u}$ is the \textit{admissible} subsolution along the whole method of continuity.
Combining Theorem \ref{boundaryestimatethm} with Theorem \ref{globalsecond},
we can derive that there exists a uniform positive constant $C$
 depending not on $|\nabla u^{t}|_{C^{0}(\bar M)}$ such that
\begin{equation}
\begin{aligned}
\sup_{M}|\Delta u^{t}| \leq C(1+\sup_{M} |\nabla u^{t}|^2). \nonumber
\end{aligned}
\end{equation}
One thus applies the blow-up argument used  in \cite{Gabor} (also previously appeared in \cite{Chen,Dinew2012Kolodziej})  to
   derive the gradient estimate,
    and so $|\partial\overline{\partial} u^t|$ has a uniform  bound.
 Applying Evans-Krylov theorem \cite{Evans82,Krylov82,Krylov83},
adapted to the complex setting (cf.  \cite{TWWYEvansKrylov2015}),
and Schauder theory,  one obtains the required higher $C^{k,\alpha}$ regularities.

 We now complete the proof of  Theorem \ref{existence}.
 Moreover, Theorem  \ref{existencede} can be derived by combining 
 Theorem \ref{existence} with the method of approximation.

\section{The Dirichlet problem on compact Riemannian manifolds with  concave  boundary}
\label{RiemannDirichlet}

Let  $(M,g)$  be a compact Riemannian manifold  with  smooth boundary $\partial  M$
 whose second fundamental form $\Pi$ is non-positive. We call such $\partial M$ a \textit{concave} boundary. 
 Throughout this section we  use  the Levi-Civita connection $\nabla$ of $(M,g)$.
 Under Levi-Civita connection, we denote the covariant derivatives as follows $$\nabla_{i}=\nabla_{e_{i}}, \quad \nabla_{ij}  =\nabla_{i} \nabla_{j}-\Gamma_{ij}^k \nabla_{k},$$ 
 where $\Gamma^{k}_{ij}$ are the Christoffel symbols.

 In this section we briefly  discuss the Dirichlet problem analogous to   \eqref{mainequ1} on such a Riemannian manifold,
\begin{equation}
\label{mainequRiemann}
\begin{aligned}
f(\lambda(\mathfrak{g}[u])) =\psi   \mbox{ in } M
\end{aligned}
\end{equation}
with $u =\varphi \mbox{ on } \partial   M$,
where we denote
$\mathfrak{g}[u]=\chi+\nabla^2u$,
$\chi$ is a smooth and symmetric $(0,2)$-tensor on $\bar M$,
and $\lambda(\mathfrak{g})=(\lambda_1,\cdots,\lambda_n)$ are the
 eigenvalues of 
 $\mathfrak{g}$ with respect to the Riemannian metric $g$.

As  in the complex setting, we  only need to derive the quantitative boundary estimates.
The following theorem states that the quantitative boundary estimates  
 hold for Dirichlet problem \eqref{mainequRiemann} on the Riemannian manifold with concave boundary.

\begin{theorem}
\label{dahanwangchao1}
With the above notations.
Let $\psi\in  C^1(M)\cap C^{0,1}(\bar M)$ and
$\varphi\in C^3(\partial  M)$.
   Suppose, in addition to   \eqref{elliptic}-\eqref{addistruc}, that
 Dirichlet problem \eqref{mainequRiemann} admits an admissible subsolution $\underline{u}\in  C^{2}(\bar M)$.
Then for any
  admissible solution $u \in C^3(M)\cap C^{2}(\bar M)$ of the Dirichlet problem, we have
    \begin{equation}
    \label{bdr-estimate-Riemann1}
    \begin{aligned}
\sup_{\partial  M}|\nabla^2 u| \leq C(1+\sup_{ M}|\nabla u|^2),
\end{aligned}
\end{equation}
where $C$ is a uniform constant depending only on
$|u|_{C^0(\bar M)}$,
$\sup_{\partial M}|\nabla u| $, $|\varphi|_{C^{2,1}(\bar M)}$, $|\underline{u}|_{C^{2}(\bar M)}$,
$|\chi|_{C^{2}(\bar M)}$,
 $ |\psi|_{C^{0,1}(\bar M)}$ 
  and other known data
 (but neither on $\sup_{M}|\nabla u|$ nor on $(\delta_{\psi,f})^{-1}$).

   \end{theorem}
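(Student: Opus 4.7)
The plan is to mirror the two-step scheme used in the complex case (Proposition \ref{proposition-quar-yuan1} plus Proposition \ref{Tangential-Normal derivatives}), adapting each ingredient to the Riemannian setting. Fix a boundary point $p_0\in\partial M$ and choose a local orthonormal frame $\{e_1,\dots,e_{n-1},\nu\}$ with $\nu$ the unit inner normal and $e_1,\dots,e_{n-1}$ tangent to $\partial M$. The goal is to bound $\nabla^2 u(e_\alpha,e_\beta)$, $\nabla^2 u(e_\alpha,\nu)$ and $\nabla^2 u(\nu,\nu)$ separately at $p_0$; combined they give \eqref{bdr-estimate-Riemann1}.

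\textbf{Pure tangential derivatives.} Since $u-\varphi=0$ on $\partial M$, differentiating twice along tangential directions yields
\begin{equation}
\nabla^2 u(e_\alpha,e_\beta)(p_0)=\nabla^2\varphi(e_\alpha,e_\beta)(p_0)-\Pi(e_\alpha,e_\beta)(u-\varphi)_\nu(p_0). \nonumber
\end{equation}
Thus the tangential block $\{\mathfrak{g}(e_\alpha,e_\beta)\}_{\alpha,\beta<n}$ is \emph{a priori} bounded in terms of $|\varphi|_{C^2}$, $|\chi|_{C^0}$, $|\Pi|_{C^0}$ and $\sup_{\partial M}|\nabla u|$. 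In particular, writing the same identity for the subsolution $\underline{u}$ and using $u=\underline{u}=\varphi$ on $\partial M$,
\begin{equation}
\mathfrak{g}(e_\alpha,e_\beta)-\underline{\mathfrak{g}}(e_\alpha,e_\beta)=-\Pi(e_\alpha,e_\beta)(u-\underline{u})_\nu. \nonumber
\end{equation}
Because $\Pi\leq 0$ and $u\geq\underline{u}$ in $M$ with equality on $\partial M$, we have $(u-\underline{u})_\nu\geq 0$ so $\mathfrak{g}(e_\alpha,e_\beta)\geq\underline{\mathfrak{g}}(e_\alpha,e_\beta)$ as symmetric matrices on the tangent block; this concavity-of-$\partial M$ input is precisely the real counterpart of the Levi flat identity \eqref{puretangential2}.

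\textbf{Tangential--normal derivatives.} Following Proposition \ref{Tangential-Normal derivatives}, I will build a local barrier of the shape
\begin{equation}
\widetilde\Psi=A_1\sqrt{b_1}(\underline{u}-u)-A_2\sqrt{b_1}\rho^2+A_3\sqrt{b_1}(N\sigma^2-t\sigma)+\frac{1}{\sqrt{b_1}}\sum_{\alpha<n}|(u-\varphi)_\alpha|^2+D(u-\varphi), \nonumber
\end{equation}
with $b_1=1+\sup_M|\nabla(u-\varphi)|^2+\sup_M|\nabla\varphi|^2$ and $D$ a tangential first-order operator at $p_0$. The bounds on $\mathcal{L}D(u-\varphi)$ are obtained by commuting covariant derivatives with respect to the Levi-Civita connection (the analog of Lemma \ref{DN}), differentiating the equation once, and using $|u_{ij}|\leq|\mathfrak{g}_{ij}|+|\chi_{ij}|$. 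Splitting into the two cases $|\nu_\lambda-\nu_{\underline\lambda}|\geq\beta$ and $<\beta$ as in the complex proof, with Lemmas \ref{lxf4}--\ref{guan2014} together with Proposition 2.19 of \cite{Guan12a} supplying the positive $\sum f_i\lambda_i^2$ term, one obtains $\mathcal L\widetilde\Psi\geq 0$ in a half-ball $\Omega_\delta$. On $\partial M\cap\overline{\Omega}_\delta$ the mixed cross terms $(u-\varphi)_\alpha$ and $D(u-\varphi)$ vanish because $\varphi$ is prescribed there, and the choice $A_2\gg 1$ makes $\widetilde\Psi\leq 0$ on $M\cap\partial\Omega_\delta$. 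The maximum principle then gives $\widetilde\Psi\leq 0$ in $\Omega_\delta$ with $\widetilde\Psi(p_0)=0$, so $\widetilde\Psi_\nu(p_0)\leq 0$, yielding $|\nabla^2 u(e_\alpha,\nu)(p_0)|\leq C(1+\sup_M|\nabla u|)$.

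\textbf{Double normal derivative and main obstacle.} Having controlled the tangential block by $O(1+\sup_{\partial M}|\nabla u|)$ and the tangential--normal entries by $O(1+\sup_M|\nabla u|)$, I apply the real symmetric version of Lemma \ref{refinement2} to the full Hessian matrix $\{\mathfrak{g}(e_i,e_j)\}$ at $p_0$, with the role of $\mathbf a$ played by $\mathfrak{g}(\nu,\nu)$. Exactly as in the derivation of \eqref{opppp}, one uses the ellipticity and concavity of $f$, the admissibility of $\underline{u}$, and the lower bound $\mathfrak{g}(e_\alpha,e_\beta)\geq\underline{\mathfrak{g}}(e_\alpha,e_\beta)$ obtained above, to conclude: if $\mathfrak{g}(\nu,\nu)$ exceeds the quadratic threshold $\frac{2n-3}{\varepsilon_0}\sum_\alpha|\mathfrak{g}(e_\alpha,\nu)|^2+C$, then $\lambda(\mathfrak{g})$ lies in the region where $F(\mathfrak{g})>\psi$, contradicting the equation. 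Hence $\mathfrak{g}(\nu,\nu)(p_0)\leq C(1+\sum_\alpha|\mathfrak{g}(e_\alpha,\nu)(p_0)|^2)\leq C(1+\sup_M|\nabla u|^2)$. The main obstacle I expect is the first step: since Lemma \ref{refinement2} is sensitive to the size of the tangential block, one must ensure the concavity assumption $\Pi\leq 0$ delivers a one-sided control that is \emph{consistent} with the perturbation argument of \eqref{opppp}, in particular that the admissible subsolution provides a point in $\Gamma$ above which the equation has slack; this is the real analog of Levi flatness and is exactly where the hypothesis $\Pi\leq 0$ cannot be dropped.
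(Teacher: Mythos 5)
Your proposal follows the paper's own proof essentially step by step: the barrier argument of Proposition~\ref{Tangential-Normal derivatives} gives the tangential--normal bound, the boundary identity $\nabla_{\alpha\beta}(u-\underline{u})=-(u-\underline{u})_\nu\,\Pi(e_\alpha,e_\beta)$ together with $\Pi\leq 0$ and $(u-\underline u)_\nu\geq 0$ replaces the Levi-flat equality \eqref{puretangential2} with the one-sided control $(\mathfrak{g}-\underline{\mathfrak{g}})|_{\partial M}(e_\alpha,e_\beta)\geq 0$, and the monotonicity of $F$ combined with Lemma~\ref{refinement2} then bounds $\mathfrak{g}(\nu,\nu)$ exactly as in Proposition~\ref{proposition-quar-yuan1}. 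The ``main obstacle'' you flag at the end is already resolved by the inequality you derived, so the argument is complete and coincides with the paper's.
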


   \begin{proof}
 Here we  sketch  the proof.
  Let $(e_{1}, \cdots, e_{n})$ be  smooth orthonormal local frames around $x\in \partial  M$,
  $e_n$   the interior normal to $\partial  M$ along the boundary when restricted to $\partial M$.
It is similar to  the proof of Proposition \ref{Tangential-Normal derivatives}, one may
   derive
 \begin{equation}
 \label{piq}
 \begin{aligned}
\sup_{\partial M}|\nabla_{\alpha n} u| \leq C(1+\sup_M |\nabla u|), 1\leq  \alpha<n.
 \end{aligned}
 \end{equation}
 It is analogous to the proof of  Proposition  \ref{proposition-quar-yuan1}, we only need to verify that
  \begin{equation}
  \label{uuuu}
 \begin{aligned}
(\mathfrak{g}-\mathfrak{\underline{g}})|_{\partial  M}(e_{\alpha},e_{\beta})\geq 0, \mbox{  } \forall 1\leq\alpha,\beta\leq n-1.
\end{aligned}
 \end{equation}

Next we check \eqref{uuuu}.
 Since $u-\underline{u}=0$ on $\partial  M$, we have
 \begin{equation}
 \begin{aligned}
 \nabla_{\alpha\beta}(u-\underline{u})  =-\nabla_{n}(u-\underline{u})\Pi(e_{\alpha},e_{\beta})\mbox{ }
  \mbox{ on } \partial   M,  \mbox{  }  \forall 1\leq \alpha, \beta \leq n-1. \nonumber
 \end{aligned}
 \end{equation}
From \eqref{daqindiguo1}
 we know
 $$\nabla_{n}(u-\underline{u})\geq 0\mbox{ on } \partial   M.$$
Thus   \eqref{uuuu} holds when $\partial M$ is concave.  The proof is complete.



   \end{proof}

\begin{theorem}
\label{existenceRiemann}
Let  $(M,g)$ be a compact Riemannian manifold with    smooth concave boundary.
Suppose, in addition to \eqref{elliptic}-\eqref{addistruc},
 that there exists an admissible subsolution $\underline{u}\in C^{3}(\bar M)$ of
 Dirichlet problem  \eqref{mainequRiemann} with
 $\psi\in C^{\infty}(\bar M)$ and $\varphi\in C^{\infty}(\partial M)$.
 Then there exists a unique smooth admissible function $u\in C^{\infty}(\bar M)$ to solve the Dirichlet problem.

\end{theorem}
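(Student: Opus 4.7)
The plan is to follow the continuity method used in Section \ref{solvingequation}, with Theorem \ref{dahanwangchao1} now playing the role that Theorem \ref{boundaryestimatethm} played for the Hermitian case. After a routine approximation step to upgrade $\underline{u}$ to a smooth admissible subsolution, I would consider the one-parameter family
\begin{equation*}
F(\mathfrak{g}[u^t]) = (1-t)\,F(\mathfrak{g}[\underline{u}]) + t\,\psi \text{ in } M, \qquad u^t = \varphi \text{ on } \partial M,
\end{equation*}
for $t \in [0,1]$, and set $I = \{ t \in [0,1] : \text{there exists } u^t \in C^{4,\alpha}(\bar M) \text{ solving the above}\}$. Taking $u^0 = \underline{u}$ gives $0 \in I$. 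Since $\underline{u}$ remains an admissible subsolution for every right-hand side along the family, openness of $I$ follows from the implicit function theorem applied to the linearization $L v = F^{ij}(\mathfrak{g}[u^t])\,\nabla_{ij} v$, which is elliptic by \eqref{elliptic} and has trivial Dirichlet kernel by the maximum principle. Closedness is the content of uniform a priori $C^{2,\alpha}$ estimates independent of $t$.

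The a priori estimates would be assembled in the standard order. The $C^0$ bound and boundary gradient bound come from the comparison principle together with the harmonic auxiliary function $w$ solving $\Delta w + g^{ij}\chi_{ij} = 0$ with $w = \varphi$ on $\partial M$, exactly as in \eqref{c0-bdr-c1}, giving $\underline{u} \leq u^t \leq w$ and hence $\sup_{\bar M} |u^t| + \sup_{\partial M} |\nabla u^t| \leq C$. Theorem \ref{dahanwangchao1} then produces the quantitative boundary Hessian bound
\begin{equation*}
\sup_{\partial M}|\nabla^2 u^t| \leq C\bigl(1 + \sup_M |\nabla u^t|^2\bigr),
\end{equation*}
precisely where the concavity of $\partial M$ is consumed (through \eqref{uuuu}). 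Next I would prove a Riemannian analog of Theorem \ref{globalsecond}, following Sz\'ekelyhidi's maximum-principle argument for $H(x) = \log \lambda_{\max}(\nabla^2 u) + \phi(|\nabla u|^2) + A(\underline{u} - u)$ adapted to the Levi--Civita connection, with commutator curvature terms absorbed by the subsolution mechanism of Guan \cite{Guan12a} together with Lemma \ref{guan2014}. This upgrades the boundary estimate to the global bound $\sup_{\bar M} |\nabla^2 u^t| \leq C(1 + \sup_M |\nabla u^t|^2)$. A blow-up argument in the spirit of Dinew--Ko\l odziej and Sz\'ekelyhidi, invoking the Liouville-type theorem guaranteed by \eqref{addistruc} on the Euclidean model, then converts this into $\sup_M |\nabla u^t| \leq C$ and hence $\sup_M |\nabla^2 u^t| \leq C$. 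With the Hessian uniformly bounded, admissibility forces uniform ellipticity and concavity of $F$ on the relevant range, so Evans--Krylov yields $C^{2,\alpha}$ and Schauder bootstrapping gives $C^\infty$ regularity.

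The main obstacle I anticipate is the Riemannian global second-order estimate: the real setting lacks the simplifications of the $(1,1)$-form calculus, so the curvature contributions arising from commuting third derivatives in $\mathcal{L}(\lambda_{\max}(\nabla^2 u))$ must be absorbed carefully against the good third-derivative square terms produced by differentiating the equation twice. The key absorption inequality $\sum f_i \lambda_i^2 \geq c_0 |\lambda|^2 \sum f_i - C \sum f_i$ (cf.\ the derivation in Case II of the proof of Proposition \ref{Tangential-Normal derivatives}) supplied by the subsolution hypothesis will be the mechanism, but one must verify that every constant remains independent of $(\delta_{\psi,f})^{-1}$ along the whole continuity path. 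A secondary technical point is the blow-up step: one must check that rescaling around a maximizing sequence produces a tangent solution on a half-space or whole $\mathbb{R}^n$ to which the Liouville theorem applies, using the smooth concave boundary to flatten limits uniformly.

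Uniqueness is standard and can be recorded independently. For two admissible solutions $u_1, u_2 \in C^2(\bar M)$ with identical boundary data, the concavity of $f$ on admissible inputs gives
\begin{equation*}
0 = F(\mathfrak{g}[u_1]) - F(\mathfrak{g}[u_2]) \leq F^{ij}(\mathfrak{g}[u_2])\,\nabla_{ij}(u_1 - u_2),
\end{equation*}
so the maximum principle for the uniformly elliptic linear operator on the right forces $u_1 \leq u_2$; swapping the roles of $u_1$ and $u_2$ yields $u_1 = u_2$. Combined with the a priori bounds along the continuity path, this completes Theorem \ref{existenceRiemann}.
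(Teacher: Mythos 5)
Your proposal is correct and takes essentially the same route the paper intends: Section 6 explicitly remarks that once the quantitative boundary estimate of Theorem \ref{dahanwangchao1} is in place, the argument proceeds exactly as in the Hermitian case of Section \ref{solvingequation} — continuity method, the global second-order estimate of Guan \cite{Guan12a} / Sz\'ekelyhidi type, blow-up via the Liouville theorem furnished by \eqref{addistruc}, then Evans--Krylov and Schauder. Your filling-in of those steps, including the approximation of $\underline{u}$, the comparison with the auxiliary solution $w$, and the concavity-based uniqueness argument, matches the paper's (largely implicit) proof.
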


\begin{theorem}
\label{existencedeRiemann}
Let $(M, g)$ be a compact Riemannian manifold  with smooth concave boundary.
Suppose, in addition to  \eqref{elliptic}, \eqref{concave} and  \eqref{addistruc},  that $f\in C^\infty(\Gamma)\cap C^0(\bar \Gamma)$.
Assume $\varphi\in C^{2,1}(\partial  M)$, and the function $\psi\in C^{1,1}(\bar M)$   satisfies $\delta_{\psi, f}=0$.
If  Dirichlet problem \eqref{mainequRiemann}
admits  a strictly admissible subsolution $\underline{u}\in C^{2,1}(\bar M)$,
then it  admits a weak   solution $u\in C^{1,1}(\bar M)$
 with $\lambda(\mathfrak{g}[u])\in \bar \Gamma$ with $\Delta u \in  L^{\infty}(\bar M)$.

\end{theorem}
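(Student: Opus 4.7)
The plan is to reduce the degenerate Dirichlet problem to a family of non-degenerate ones covered by Theorem \ref{existenceRiemann}, and then pass to the limit. The essential structural feature making this possible is that the a priori estimates, namely the quantitative boundary estimate \eqref{bdr-estimate-Riemann1}, the global Hessian estimate \eqref{2-se-global-Gabor}, and the gradient bound obtained via the Dinew-Ko{\l}odziej / Sz\'ekelyhidi blow-up argument, all have constants independent of $(\delta_{\psi,f})^{-1}$, so the estimates survive the degeneration $\epsilon\to 0$.

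First I would set up the approximation. Fix $\epsilon\in(0,\delta_0/4)$ and mollify the given data to obtain $\psi^{\epsilon}\in C^{\infty}(\bar M)$, $\varphi^{\epsilon}\in C^{\infty}(\partial M)$, and $\underline{u}^{\epsilon}\in C^{\infty}(\bar M)$ with $\underline{u}^{\epsilon}=\varphi^{\epsilon}$ on $\partial M$ (achieved by mollifying $\underline{u}$ in the interior and adjusting by a harmonic extension of the boundary difference), satisfying uniform bounds $|\psi^{\epsilon}|_{C^{1,1}(\bar M)}\leq C$, $|\varphi^{\epsilon}|_{C^{2,1}(\partial M)}\leq C$, $|\underline{u}^{\epsilon}|_{C^{2,1}(\bar M)}\leq C$, together with $|\psi^{\epsilon}-\psi|_{C^{0}(\bar M)}\leq \epsilon$. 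Using \eqref{stricsubsolution} and continuity, for $\epsilon$ small enough one obtains
\begin{equation}
f(\lambda(\mathfrak{g}[\underline{u}^{\epsilon}]))\geq \psi^{\epsilon}+\epsilon+\tfrac{\delta_0}{2}\quad \mbox{in } M.  \nonumber
\end{equation}
Thus $\underline{u}^{\epsilon}$ is an admissible subsolution for the non-degenerate problem
\begin{equation}
f(\lambda(\mathfrak{g}[u^{\epsilon}]))=\psi^{\epsilon}+\epsilon\ \mbox{in } M,\quad u^{\epsilon}=\varphi^{\epsilon}\ \mbox{on }\partial M,  \nonumber
\end{equation}
for which $\delta_{\psi^{\epsilon}+\epsilon,f}\geq \epsilon>0$. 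Applying Theorem \ref{existenceRiemann} produces a smooth admissible solution $u^{\epsilon}\in C^{\infty}(\bar M)$.

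Next I would collect uniform estimates. Since each ingredient in the chain of estimates has a constant that depends only on $|\underline{u}^{\epsilon}|_{C^{2}(\bar M)}$, $|\varphi^{\epsilon}|_{C^{2,1}(\bar M)}$, $|\psi^{\epsilon}|_{C^{1,1}(\bar M)}$, $|\chi|_{C^{2}(\bar M)}$ and background geometric data, all uniformly controlled in $\epsilon$, one concludes
\begin{equation}
|u^{\epsilon}|_{C^{0}(\bar M)}+|\nabla u^{\epsilon}|_{C^{0}(\bar M)}+|\nabla^{2}u^{\epsilon}|_{L^{\infty}(\bar M)}\leq C, \nonumber
\end{equation}
with $C$ independent of $\epsilon$. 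Arzel\`a-Ascoli then yields a subsequence $u^{\epsilon_k}\to u$ in $C^{1,\alpha}(\bar M)$ for every $\alpha\in(0,1)$, with $u\in C^{1,1}(\bar M)$ by lower semicontinuity of the $C^{1,1}$ seminorm; in particular $\nabla^{2}u\in L^{\infty}(\bar M)$ and hence $\Delta u\in L^{\infty}(\bar M)$. Weak-$*$ convergence $\nabla^{2}u^{\epsilon_k}\rightharpoonup \nabla^{2}u$ together with the Lebesgue differentiation theorem provides pointwise a.e. convergence of the Hessians (along a further subsequence), so admissibility $\lambda(\mathfrak{g}[u^{\epsilon_k}])\in\Gamma$ plus continuity of $f$ on $\bar\Gamma$ delivers $\lambda(\mathfrak{g}[u])\in\bar\Gamma$ a.e. and $f(\lambda(\mathfrak{g}[u]))=\psi$ a.e. in $M$. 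The boundary condition $u=\varphi$ is preserved by the uniform convergence.

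The main obstacle is verifying that every estimate in the chain, especially the blow-up gradient bound, depends only on the subsolution data and standard geometric quantities and not on $(\delta_{\psi,f})^{-1}$ once the equation is perturbed. This requires checking that Sz\'ekelyhidi's Liouville type theorem and the rescaling compactness argument operate in the Riemannian setting with concave boundary in the same way as in the Hermitian case handled in Section \ref{solvingequation}, so that the gradient bound for $u^{\epsilon}$ stays independent of $\epsilon$. A secondary technical point is the construction of $\underline{u}^{\epsilon}$ with exact boundary match $\varphi^{\epsilon}$ and preserved strict subsolution inequality with uniform margin; this is handled by mollifying $\underline{u}-\varphi$ in the interior and adding a smooth extension of $\varphi^{\epsilon}$, with concavity of $\partial M$ ensuring no loss in the admissibility of $\lambda(\mathfrak{g}[\underline{u}^{\epsilon}])$ near the boundary.
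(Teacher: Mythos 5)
Your overall strategy (mollify the data, solve the non-degenerate perturbed problems via Theorem \ref{existenceRiemann}, use that the a priori estimates are independent of $(\delta_{\psi,f})^{-1}$, pass to a $C^{1,1}$ limit) is precisely what the paper intends, since it proves the degenerate theorems by ``combining the non-degenerate existence theorem with the method of approximation.'' However, there is a genuine error in the limiting step. You assert that ``weak-$*$ convergence $\nabla^2 u^{\epsilon_k}\rightharpoonup \nabla^2 u$ together with the Lebesgue differentiation theorem provides pointwise a.e.\ convergence of the Hessians (along a further subsequence).'' This is false: weak-$*$ convergence in $L^\infty$ gives no pointwise a.e.\ convergence, even after passing to subsequences (consider $\sin(kx)\rightharpoonup^* 0$). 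The Lebesgue differentiation theorem controls behavior of a single $L^1$ function at a.e.\ point; it says nothing about weakly convergent sequences. Consequently your deduction of ``$f(\lambda(\mathfrak{g}[u]))=\psi$ a.e.'' does not follow from this reasoning.

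The conclusion $\lambda(\mathfrak{g}[u])\in\bar\Gamma$ a.e.\ can still be rescued, but by a convexity argument rather than pointwise convergence: the set $K=\{A \text{ symmetric}:\lambda(A)\in\bar\Gamma\}$ is a closed convex cone, the approximating Hessians $\mathfrak{g}[u^{\epsilon_k}]$ take values in $K$ a.e., and a closed convex constraint set is stable under weak-$*$ limits (Mazur/Hahn--Banach), so $\mathfrak{g}[u]\in K$ a.e. That $u\in C^{1,1}$ and $\Delta u\in L^\infty$ then follow from the uniform Hessian bounds. In what sense the equation itself holds for $u$ (pointwise a.e.\ at Alexandrov points, or in a viscosity/comparison sense) is a separate question not settled by weak-$*$ convergence, and the theorem statement deliberately asserts only ``weak solution'' with $\lambda(\mathfrak{g}[u])\in\bar\Gamma$ and $\Delta u\in L^\infty$. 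A secondary soft spot is the construction of the approximating subsolutions $\underline{u}^\epsilon$: mollifying $\underline{u}$ near $\partial M$ and re-matching the boundary value by adding a harmonic correction does perturb the full Hessian, and while the strict margin $\delta_0$ absorbs a small $C^2$ perturbation, the appeal to ``concavity of $\partial M$'' as the justification is not the relevant point; the actual justification is that the correction can be made $C^2$-small uniformly.
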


 \begin{appendix}
	\medskip

\section{A characterization of
	level sets of $f$}
\label{Appendix}

In this appendix we  present a   characterization of
level sets of $f$ which satisfies  \eqref{elliptic}, \eqref{concave} and  \eqref{addistruc}. We   prove

\begin{proposition}
\label{charac}
Suppose \eqref{elliptic} and \eqref{concave} hold, and $\sigma\in (\sup_{\partial\Gamma}f, \sup_{\Gamma}f)$. Denote by $l_\lambda=\{t\lambda: t>0\}$.
Then the following statements are equivalent.
\begin{enumerate}
\item Condition \eqref{addistruc} holds.
\item For any $\lambda\in \Gamma$, the ray $l_{\lambda}$ intersects every $\partial\Gamma^\sigma $.
\item For any $\lambda\in \Gamma$, the ray $l_\lambda$ intersects every
$\partial\Gamma^\sigma $ at a unique one point.
\end{enumerate}

\end{proposition}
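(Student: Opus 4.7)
The plan is to prove the three equivalences by studying the one-variable function $h_\lambda(t) := f(t\lambda)$ for $\lambda \in \Gamma$ and $t > 0$. Since $f$ is concave and $t \mapsto t\lambda$ is linear, $h_\lambda$ is a concave function of $t \in (0,\infty)$. Two preliminary observations govern the whole argument. First, as $t \to 0^+$ the point $t\lambda$ tends to $0 \in \partial\Gamma$, so $\limsup_{t \to 0^+} h_\lambda(t) \leq \sup_{\partial\Gamma} f$. Second, and crucially, $h_\lambda(t) < \sup_\Gamma f$ for every $t > 0$: since $\vec{1} \in \Gamma_n \subseteq \Gamma$ and $\Gamma$ is a convex cone, $t\lambda + s\vec{1} \in \Gamma$ for every $s > 0$, and \eqref{elliptic} gives $f(t\lambda) < f(t\lambda + s\vec{1}) \leq \sup_\Gamma f$. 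In particular, $\sup_\Gamma f$ is never attained at an interior point of $\Gamma$.

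For $(1) \Rightarrow (3)$, I would first note that \eqref{addistruc} is equivalent to $\lim_{t \to \infty} h_\lambda(t) = \sup_\Gamma f$. A concave function on $(0,\infty)$ with a finite limit at $+\infty$ must be non-decreasing, because if $h_\lambda'(t_0) < 0$ at some point, then $h_\lambda'$ being decreasing would force $h_\lambda \to -\infty$. The intermediate value theorem, combined with the two preliminary observations, then produces a point $t^* > 0$ with $h_\lambda(t^*) = \sigma$ for every $\sigma \in (\sup_{\partial\Gamma} f, \sup_\Gamma f)$. Uniqueness follows because two distinct roots $t_1 < t_2$ would force $h_\lambda \equiv \sigma$ on $[t_1,t_2]$ by the non-decreasing property; the monotonicity of $h_\lambda'$ would then yield $h_\lambda' \leq 0$ on $[t_2,\infty)$, hence $h_\lambda' \equiv 0$ and $h_\lambda \equiv \sigma$ on $[t_1,\infty)$, contradicting $\lim h_\lambda = \sup_\Gamma f > \sigma$.

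The implication $(3) \Rightarrow (2)$ is immediate. For $(2) \Rightarrow (1)$, fix $\lambda \in \Gamma$. Since $h_\lambda$ is continuous, condition (2) implies that its image contains the open interval $(\sup_{\partial\Gamma} f, \sup_\Gamma f)$, whence $\sup_{t > 0} h_\lambda(t) = \sup_\Gamma f$. By the second preliminary observation this supremum is not attained, and by the first it is not approached as $t \to 0^+$; by concavity $h_\lambda$ must therefore be non-decreasing with $\lim_{t \to \infty} h_\lambda(t) = \sup_\Gamma f$, which is precisely \eqref{addistruc}.

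The main obstacle is the direction $(2) \Rightarrow (1)$, which requires excluding the scenario in which $h_\lambda$ climbs to a finite interior maximum and then strictly decreases. This exclusion rests essentially on the non-attainment of $\sup_\Gamma f$ at interior points of $\Gamma$, which in turn uses the strict ellipticity $f_i > 0$ together with the convex cone structure of $\Gamma$ (in particular $\vec{1} \in \Gamma$). All the other implications are routine once the two preliminary observations about $h_\lambda$ are in place.
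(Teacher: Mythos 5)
Your proof is correct, and it takes a genuinely different route from the paper's. The paper's argument is geometric: it first establishes, via the auxiliary results Corollary \ref{luxiaofengg1} and Lemma \ref{asymptoticcone1}, that $\sum_i f_i(\lambda)\lambda_i > 0$ on $\Gamma$ (using the convexity of the level hypersurface $\partial\Gamma^\sigma$ and a supporting-hyperplane argument showing that otherwise some ray misses the level set), and only then concludes that $t \mapsto f(t\lambda)$ is strictly increasing, from which all three equivalences follow by the intermediate value theorem. Your argument instead works entirely with the one-variable concave function $h_\lambda(t) = f(t\lambda)$ and two simple facts: $\limsup_{t\to 0^+} h_\lambda(t) \le \sup_{\partial\Gamma} f$ because $t\lambda \to 0 \in \partial\Gamma$, and $h_\lambda(t) < \sup_\Gamma f$ strictly because $\vec{1} \in \Gamma$ and $f_i > 0$ force $f(t\lambda) < f(t\lambda + s\vec{1})$. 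From those two facts, the standard behavior of concave functions on a half-line (a concave function whose supremum is $\sup_\Gamma f$, not attained, and not approached near $0^+$, must be non-decreasing with that limit at $+\infty$) gives everything at once, including uniqueness. The non-attainment observation plays the role that Lemma \ref{asymptoticcone1} plays in the paper. Your route is more elementary and self-contained for this particular proposition; the paper's route is organized so that Lemma \ref{asymptoticcone1} and Corollary \ref{luxiaofengg1}, which are also reused elsewhere (e.g.\ in Lemma \ref{lxf4} and in the boundary-estimate proof), are proved first and then cited. One small phrasing caveat in your (1)$\Rightarrow$(3): when $\sup_\Gamma f = +\infty$ the limit of $h_\lambda$ is not finite, but the same monotonicity-of-derivative argument (a nonpositive one-sided derivative at any point would force $h_\lambda \to -\infty$) still applies, so the conclusion is unaffected.
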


\begin{proposition}
Suppose that $f$ satisfies \eqref{elliptic}-\eqref{concave}. Let $\sigma\in (\sup_{\partial\Gamma}f, \sup_{\Gamma}f)$.
If there is an $\lambda^{''}\in \partial\Gamma^\sigma$ such that $\lambda^{''}\cdot \nu_{\lambda^{''}}<0$, then we have
 $\lambda'\cdot \nu_{\lambda^{'}}=0$ and the origin $0\in T_{\lambda^{'}}\partial\Gamma^\sigma$.

\end{proposition}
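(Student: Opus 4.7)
The plan is to study the continuous scalar function $g(\lambda):=\lambda\cdot\nu_\lambda$ on the smooth level hypersurface $\partial\Gamma^\sigma$, to produce a point where $g>0$, and then to apply an intermediate value argument between that point and the given $\lambda''$ at which $g<0$. To set up, I would first record the structural features that will be used: by \eqref{elliptic} one has $Df\neq 0$, so $\partial\Gamma^\sigma$ is a smooth hypersurface on which $\nu_\lambda$, and hence $g$, is continuous; by \eqref{concave} the superlevel set $K:=\{\lambda\in\Gamma:f(\lambda)\geq\sigma\}$ is convex; and because $\sigma>\sup_{\partial\Gamma}f$ a short argument using the definition of $\sup_{\partial\Gamma}f$ shows that $K$ is closed in $\mathbb{R}^n$ and bounded away from the origin, so in particular $0\notin K$. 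Moreover, $\nu_\lambda$ is the \emph{inward} unit normal to the convex body $K$, since moving from $\lambda$ in the direction $\nu_\lambda$ strictly increases $f$ and hence enters $K$.

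Next I would produce the positive-$g$ point by projecting the origin onto $K$: let $\lambda^\ast$ be the (unique) nearest point of $K$ to $0$, which exists and lies on $\partial\Gamma^\sigma$ since $K$ is a closed convex set not containing $0$. The standard variational inequality for the metric projection gives $\lambda^\ast\cdot(\mu-\lambda^\ast)\geq 0$ for every $\mu\in K$, so the hyperplane $\{\mu:\lambda^\ast\cdot\mu=|\lambda^\ast|^2\}$ supports $K$ at $\lambda^\ast$ with inward normal parallel to $\lambda^\ast$. Since $\partial\Gamma^\sigma$ is a smooth hypersurface, this supporting hyperplane must coincide with the tangent hyperplane, forcing $\nu_{\lambda^\ast}=\lambda^\ast/|\lambda^\ast|$ and hence $g(\lambda^\ast)=|\lambda^\ast|>0$. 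Invoking the standard fact that the boundary of a closed convex set in $\mathbb{R}^n$ ($n\geq 2$) with nonempty interior is connected (the nonempty interior follows from $\sigma<\sup_\Gamma f$), the intermediate value theorem applied to $g$ produces $\lambda'\in\partial\Gamma^\sigma$ with $g(\lambda')=\lambda'\cdot\nu_{\lambda'}=0$.

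This vanishing is exactly the claimed geometric content: the affine tangent hyperplane $\{\mu:\nu_{\lambda'}\cdot(\mu-\lambda')=0\}$ at $\lambda'$ passes through $0$, equivalently $-\lambda'\in T_{\lambda'}\partial\Gamma^\sigma$, so the origin lies in the affine tangent space at $\lambda'$. The main technical point I expect to be slightly delicate is the identification at $\lambda^\ast$ of the convex-analytic supporting direction with the smooth normal $\nu_{\lambda^\ast}$, which relies crucially on the smoothness of $\partial\Gamma^\sigma$ guaranteed by ellipticity; everything else is a routine combination of the projection theorem and an elementary IVT on a connected hypersurface. If one prefers to avoid the general fact on connectedness of boundaries of convex bodies, a direct path in $\partial\Gamma^\sigma$ from $\lambda^\ast$ to $\lambda''$ can be built by radially projecting the segment $[\lambda^\ast,\lambda'']\subset K$ outward from an interior point of $K$ onto $\partial\Gamma^\sigma$.
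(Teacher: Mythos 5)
Your proof is correct and follows the same overall strategy as the paper — put a scalar function (essentially $g(\lambda)=\lambda\cdot\nu_{\lambda}$, which agrees in sign with the paper's $\sum f_i(\lambda)\lambda_i$) on $\partial\Gamma^{\sigma}$, produce a point where it is positive, and apply an intermediate value argument against $\lambda''$ — but you take a genuinely different and considerably more elaborate route to the positive point. The paper simply takes the diagonal point $c_{\sigma}\vec{1}$ with $f(c_{\sigma}\vec{1})=\sigma$; there $g(c_{\sigma}\vec{1})=c_{\sigma}\sum_i f_i(c_{\sigma}\vec{1})>0$ is immediate from \eqref{elliptic} and $c_\sigma>0$. (The existence of $c_\sigma$, which the paper doesn't spell out, follows from symmetry of $f$ plus \eqref{concave}: averaging any $\lambda_0$ with $f(\lambda_0)>\sigma$ over permutations gives $f(\bar\lambda_0\vec{1})\geq f(\lambda_0)>\sigma$, and the ray $t\vec{1}$ falls below $\sigma$ near $t=0$.) Your approach instead projects the origin onto the convex superlevel set $K$ and identifies the resulting convex-analytic supporting normal with the smooth normal $\nu_{\lambda^{\ast}}$; this is heavier machinery but has the minor advantage of not invoking symmetry of $f$. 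Both arguments are sound.

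One point deserves a flag in your write-up: you cite as a ``standard fact'' that the boundary of a closed convex set with nonempty interior in $\mathbb{R}^n$, $n\geq 2$, is connected. That is false in general for unbounded convex sets (a slab between two parallel hyperplanes is a counterexample). In the present situation the conclusion still holds, because \eqref{elliptic} implies $f$ is strictly increasing along any direction in $\Gamma_n$, which precludes $K=\{f\geq\sigma\}$ from being a slab: moving from either face of a putative slab in a suitable direction of $\Gamma_n$ would simultaneously exit $K$ and strictly increase $f$, a contradiction. The radial-projection alternative you sketch in your last sentence is the cleaner way to close this, and is essentially what the paper also leaves implicit; I would recommend making one of these arguments explicit rather than appealing to the general ``standard fact,'' which is not quite true as stated.
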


\begin{proof}
Let  $g: \partial\Gamma^\sigma\rightarrow \mathbb{R}$ be a function
defined by $g(\lambda)=\sum_{i=1}^n f_i(\lambda)\lambda_i.$
Let $c_\sigma$
be the positive constant which satisfies $f(c_\sigma \vec{1})=\sigma$.
Clearly, $g$ is continuous, and
$$g(c_\sigma \vec{1})=c_\sigma\sum_{i=1}^n f_i(c_\sigma \vec{1})>0.$$
Note that $\lambda^{''}\cdot \nu_{\lambda^{''}}<0$. So we have $\lambda^{'}\in \partial\Gamma^\sigma$ such that
$$g(\lambda^{'})=0,
\mbox{ i.e.}, \lambda'\cdot \nu_{\lambda^{'}}=0.$$
\end{proof}

\begin{corollary}
\label{luxiaofengg1}
Given $\sigma\in (\sup_{\partial\Gamma}f, \sup_{\Gamma}f)$, and we assume $f$ satisfies \eqref{elliptic} and \eqref{concave}.
If there is an $\lambda\in \partial\Gamma^\sigma$ such that $\sum_{i=1}^n f_i(\lambda)\lambda_i\leq 0$, then there exists an $\mu\in \partial\Gamma^\sigma$ such that
$$f(t\mu)<\sigma \mbox{ for any } t>0.$$
Geometrically, the ray $\{t\mu: t>0\}$ does not intersect $\partial\Gamma^\sigma$.
\end{corollary}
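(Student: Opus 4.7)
The plan is to reduce the hypothesis to a point on $\partial\Gamma^\sigma$ whose tangent hyperplane passes through the origin, and then to perturb that point inward along the gradient direction to produce the desired $\mu$.

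First, I would invoke the preceding proposition. If $\sum_i f_i(\lambda)\lambda_i < 0$ at the given $\lambda$, that proposition produces $\lambda' \in \partial\Gamma^\sigma$ with $\sum_i f_i(\lambda')\lambda'_i = 0$; if instead $\sum_i f_i(\lambda)\lambda_i = 0$ already, I simply take $\lambda' = \lambda$. In either case, the supporting hyperplane
$\{\xi : \sum_i f_i(\lambda')(\xi_i - \lambda'_i) = 0\}$
to the convex upper level set $\{f \geq \sigma\}$ at $\lambda'$ passes through the origin.

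Next, I would choose $\mu = \lambda' - \epsilon\, Df(\lambda')$ with $\epsilon > 0$ small enough that $\mu \in \Gamma$, which is possible since $\Gamma$ is open. Because $\Gamma$ is a cone, $t\mu \in \Gamma$ for every $t > 0$. Using $f(\lambda') = \sigma$, $\sum_i f_i(\lambda')\lambda'_i = 0$, and the first-order concavity inequality for $f$ on $\Gamma$, one obtains
\begin{equation}
f(t\mu) \leq f(\lambda') + \sum_i f_i(\lambda')(t\mu_i - \lambda'_i) = \sigma + t \sum_i f_i(\lambda')\mu_i = \sigma - t\epsilon\, |Df(\lambda')|^2, \nonumber
\end{equation}
which is strictly less than $\sigma$ for every $t > 0$, as desired. (The conclusion forces $\mu \notin \partial\Gamma^\sigma$, so the ``$\mu \in \partial\Gamma^\sigma$'' in the statement appears to be a typo; the natural reading consistent with the geometric assertion is $\mu \in \Gamma$.)

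The only nontrivial ingredient is the reduction step, which is precisely the preceding proposition: the continuous function $g(\lambda) = \sum_i f_i(\lambda)\lambda_i$ is positive at $c_\sigma \vec{1}$ (by \eqref{elliptic}) and nonpositive at $\lambda$ by hypothesis, so an intermediate-value argument on the connected set $\partial\Gamma^\sigma$ produces a zero. Once $\lambda'$ is in hand, the perturbation argument above is essentially immediate and, notably, does not invoke \eqref{addistruc} — which is fitting, since the corollary is precisely the quantitative obstruction to \eqref{addistruc} predicted by Proposition \ref{charac}.
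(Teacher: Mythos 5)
Your proof is correct and follows essentially the same route as the paper: reduce via the preceding proposition to a point $\lambda'$ whose tangent hyperplane passes through the origin, perturb inward along the gradient direction to get $\mu$, and conclude $f(t\mu)<\sigma$. The only cosmetic difference is that you invoke the first-order concavity inequality $f(t\mu)\leq f(\lambda')+Df(\lambda')\cdot(t\mu-\lambda')$ directly, whereas the paper phrases the same fact geometrically as the supporting-hyperplane property $(\hat\lambda-\lambda')\cdot\nu_{\lambda'}\geq 0$ of the convex upper level set and then observes $(t\mu-\lambda')\cdot\nu_{\lambda'}<0$; these are equivalent. You are also a bit more careful than the paper on two small points: you explicitly handle the boundary case $\sum_i f_i(\lambda)\lambda_i=0$ (where one takes $\lambda'=\lambda$ without appealing to the intermediate-value step, which in the proposition is only stated for strict inequality), and you flag the apparent typo $\mu\in\partial\Gamma^\sigma$ (the conclusion $f(t\mu)<\sigma$ for all $t>0$ forces $\mu\notin\partial\Gamma^\sigma$; the intended reading is $\mu\in\Gamma$).
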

\begin{proof}
 Given $\sigma\in (\sup_{\partial \Gamma}f, \sup_{\Gamma} f)$.
The conditions \eqref{elliptic} and \eqref{concave} ensure that 
  $\partial\Gamma^{\sigma}$ is a smooth  convex hypersurface.
Let $\lambda^{'}\in \partial\Gamma^\sigma$ be the point which satisfies
$\lambda^{'}\cdot \nu_{\lambda^{'}}=0$. Then $0\in T_{\lambda^{'}}\partial\Gamma^\sigma$.
For any $\hat{\lambda}\in \bar\Gamma^\sigma$ (i.e., $f(\hat{\lambda})\geq \sigma$), one obtains
$(\hat{\lambda}-\lambda^{'})\cdot \nu_{\lambda^{'}}\geq 0$, as the level set $\partial\Gamma^\sigma$ is a smooth convex hypersurface.
Let $\epsilon>0$ be a   sufficiently small constant such that $\mu=\lambda^{'}-\epsilon \nu_{\lambda^{'}}\in \Gamma$. Then
$$(t\mu-\lambda^{'})\cdot \nu_{\lambda^{'}}<0 \mbox{ for any } t>0.$$
Hence, $f(t\mu)<\sigma \mbox{ for any } t>0.$
\end{proof}


The above corollary  also yields the following:
\begin{lemma}
\label{asymptoticcone1}
Suppose   \eqref{elliptic}, \eqref{concave} and \eqref{addistruc} holds. 
Then $ \sum_{i=1}^n f_i(\lambda)\lambda_i >0 \mbox{ in }  \Gamma.$

\end{lemma}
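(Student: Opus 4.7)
The plan is to argue by contradiction along rays through the origin, exploiting concavity. Suppose for contradiction that there exists $\lambda_0 \in \Gamma$ with $\sum_{i=1}^n f_i(\lambda_0) \lambda_0^i \leq 0$. I would consider the one-variable function $h(t) = f(t\lambda_0)$ defined on $(0,+\infty)$; since the ray $\{t\lambda_0: t>0\}$ lies in the convex cone $\Gamma$ and $f$ is concave by \eqref{concave}, the function $h$ is concave in $t$, with derivative $h'(t) = \sum_{i=1}^n f_i(t\lambda_0) \lambda_0^i$.

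The key observation is that $h'(1) = \sum_i f_i(\lambda_0)\lambda_0^i \leq 0$ by assumption. Concavity of $h$ forces $h'$ to be non-increasing, so $h'(t) \leq h'(1) \leq 0$ for every $t \geq 1$, and therefore $h(t) \leq h(1) = f(\lambda_0)$ for all $t \geq 1$. This yields the one-sided bound $\lim_{t\to +\infty} f(t\lambda_0) \leq f(\lambda_0)$.

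To reach a contradiction with \eqref{addistruc} I need some $\sigma$ with $f(\lambda_0) < \sigma < \sup_\Gamma f$, which is possible provided $f(\lambda_0) < \sup_\Gamma f$. This is immediate from \eqref{elliptic}: since $\lambda_0$ is an interior point of $\Gamma$ and all $f_i(\lambda_0) > 0$, the point $\lambda_0$ is not a critical point of $f$; concretely, $\lambda_0 + \epsilon \vec{1} \in \Gamma$ for small $\epsilon > 0$ with $f(\lambda_0 + \epsilon\vec{1}) > f(\lambda_0)$. Applying \eqref{addistruc} to $\lambda_0$ and such $\sigma$ gives $\lim_{t\to+\infty} f(t\lambda_0) > \sigma > f(\lambda_0)$, contradicting the bound produced by the concavity step.

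The only subtle point is this last ruling-out of $\lambda_0$ being a maximizer of $f$ over $\Gamma$, which is the place where strict ellipticity \eqref{elliptic} is used essentially. As an alternative route one could invoke Corollary \ref{luxiaofengg1} directly, setting $\sigma = f(\lambda_0)$: the corollary would produce some $\mu \in \Gamma$ with $f(t\mu) < \sigma$ for all $t > 0$, in direct conflict with \eqref{addistruc}. The obstacle there is checking $\sigma \in (\sup_{\partial\Gamma} f, \sup_\Gamma f)$ so that the hypotheses of the corollary apply, which itself calls for a short concavity argument along $t\lambda_0$ as $t \to 0^+$; for this reason the direct route above is cleaner and shorter.
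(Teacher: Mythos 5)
Your direct argument is correct. The paper does not give an explicit proof of Lemma \ref{asymptoticcone1}; instead it presents the lemma as a consequence of Corollary \ref{luxiaofengg1}, which in turn rests on the preceding characterization of level sets $\partial\Gamma^\sigma$ as smooth convex hypersurfaces and an intermediate-value argument for the function $g(\lambda)=\sum_i f_i(\lambda)\lambda_i$ on a level set. That is precisely the route you sketch as an alternative and correctly flag as incomplete: to invoke the corollary with $\sigma = f(\lambda_0)$, one must first verify $\sigma \in (\sup_{\partial\Gamma}f,\sup_\Gamma f)$, and the lower inequality $f(\lambda_0)>\sup_{\partial\Gamma}f$ does not come for free. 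Your main line of argument sidesteps the level-set machinery entirely: restrict $f$ to the ray $t\mapsto t\lambda_0$, observe concavity of $h(t)=f(t\lambda_0)$, note $h'(1)\leq 0$ forces $h$ to be nonincreasing on $[1,\infty)$ and hence $\lim_{t\to\infty}f(t\lambda_0)\leq f(\lambda_0)$, then contradict \eqref{addistruc} using the strict inequality $f(\lambda_0)<\sup_\Gamma f$ (which follows from \eqref{elliptic} alone, by moving $\lambda_0$ in the direction $\vec 1$). This is more elementary, self-contained, and logically cleaner than the paper's implicit route; it uses only a one-variable consequence of \eqref{concave} and does not rely on any of Proposition \ref{charac}, Corollary \ref{luxiaofengg1}, or the unnamed proposition preceding it. If anything, your approach suggests that the dependency should run the other way, with Lemma \ref{asymptoticcone1} established first and then used in the level-set characterization (as in fact happens in the proof of Proposition \ref{charac}).
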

\begin{remark}
See also  
 the inequality  $(8)'$ of Caffarelli-Nirenberg-Spruck 
\cite{CNS3}. 
\end{remark}
%


\begin{proof}
[Proof of Proposition \ref{charac}]
Corollary \ref{luxiaofengg1} and Lemma \ref{asymptoticcone1} imply that if one of
 the above three conditions (1), (2), (3) holds  then
\begin{equation}
\label{luxiaofengg2}
\sum_{i=1}^{n} f_i(\lambda) \lambda_i>0.
\end{equation}
Hence, $f(t\lambda)$ is  strictly monotone increasing in $t\in \mathbb{R}^+$, and so (2) $\Leftrightarrow$ (3).

(1) $\Rightarrow$ (2): If there exist $\sigma\in (\sup_{\partial\Gamma}f, \sup_{\Gamma}f)$ and $\lambda\in \Gamma$, such that the ray
$\{t\lambda: t>0\}$ does not intersect $\partial\Gamma^\sigma$, then one easily derive
$f(t\lambda)<\sigma$ for any $t>0$.
It is a contradiction.

(2) $\Rightarrow$ (1): For any $\lambda\in \Gamma$  and $\sigma\in (\sup_{\partial\Gamma}f, \sup_{\Gamma}f)$,
we know that there is  $t_0>0$ such that $f(t_0\lambda)=\sigma$. So
 condition \eqref{addistruc} holds as one has \eqref{luxiaofengg2}.
\end{proof}

\end{appendix}


\bigskip


\small
\bibliographystyle{plain}

\end{CJK*}
\end{document}